\documentclass[12pt]{article}
\usepackage[margin=1in]{geometry}
\usepackage{authblk}

\usepackage{qstyle}

\title{Quantum expanders and property (T) \\ discrete quantum groups}
%\author{Michael Brannan, Eric Culf, and Matthijs Vernooij}
%\date{December 2024}

\author[$\dagger$]{Michael Brannan}
\author[$\star$]{Eric Culf}
\author[$\ddag$]{Matthijs Vernooij}

\affil[$\dagger$]{\small{Department of Pure Mathematics, University of Waterloo, Canada, \texttt{michael.brannan@uwaterloo.ca}}}
\affil[$\star$]{Department of Applied Mathematics and Institute for Quantum Computing, University of Waterloo, Canada, \texttt{eculf@uwaterloo.ca}}
\affil[$\ddag$]{Delft Institute of Applied Mathematics, TU Delft, The Netherlands, \texttt{m.n.a.vernooij@tudelft.nl}}

\begin{document}

\maketitle

\begin{abstract}
    Families of expander graphs were first constructed by Margulis from discrete groups with property (T). Within the framework of quantum information theory, several authors have generalised the notion of an expander graph to the setting of quantum channels. In this work, we use discrete quantum groups with property (T) to construct quantum expanders in two ways.  The first approach obtains a quantum expander family by constructing the requisite quantum channels directly from finite-dimensional irreducible unitary representations, extending earlier work of Harrow using groups. The second approach directly generalises Margulis' original construction and is based on a quantum analogue of a Schreier graph using the theory of coideals. To obtain examples of quantum expanders, we apply our machinery to discrete quantum groups with property (T) coming from compact bicrossed products.    
\end{abstract}

\section{Introduction}
Expander graphs (cf. Section \ref{subsec:c-expanders}) are particularly well-connected sparse graphs. They appear in many applications, such as robustly connected networks, error correcting codes, the complexity of local Hamiltonians, pseudorandomness, the Baum-Connes conjecture and Monte-Carlo simulations \cite{HLW06,AE15}. In the field of quantum information theory, the notion of quantum expanders (cf. Section \ref{subsec:q-expanders}) arises naturally as an extension of classical expander graphs \cite{Has07a,Har07}. Just as in the classical case, randomness, in this case in the form of random matrix techniques, provides an important way to construct quantum expanders \cite{Has07b, LaYo23}.

To give explicit examples of expander families, deterministic constructions are needed. These constructions tend to be based on the representation theory of groups, for example Margulis's original idea to construct classical expanders using property (T) groups (cf. Section \ref{subsubsec:prop-T-expanders}) and Harrow's construction of quantum expanders using finite groups (cf. Section \ref{subsubsec:q-expander-from-c-expander}) \cite{Mar73,Har07}.  Both of these approaches use the fact that the geometry of the representation theory of a group $\Gamma$ (e.g., finiteness, amenability, property (T), and so on) provides information about the spectral gap of certain convolution operators on $\Gamma$ (or its homogeneous spaces), and this in turn provides (via Cheeger inequalities) bounds on the expansion constants of related graphs or quantum channels.

This work aims to develop the representation-theoretic tools for constructing general quantum expanders. Discrete quantum groups and their representations can be used to construct more general examples of bistochastic quantum channels (cf. Section \ref{subsec:q-channels-from-q-groups}), which, in contrast to quantum channels coming from groups, are often not mixed unitary. Since property (T) makes sense for discrete quantum groups (cf Section \ref{subsubsec:dqg-duality-prop-T}), it is natural to ask if Margulis' construction is possible in the quantum setting. We show that this is indeed the case.

More precisely, we are able to construct quantum expanders from discrete quantum groups with property (T) in two ways. First, we extend the construction of quantum expanders by Harrow to obtain the following result (see Theorem \ref{thm:quantum-margulis} for the precise formulation):
\begin{theoremA} \label{thm:A}
    Let $\dqG$ be an infinite property (T) discrete quantum group with fixed symmetric Kazhdan pair $(E,\epsilon) \in \mathrm{Irred}(\cqG) \times \mathbb R_{>0}$, where $\mathrm{Irred}(\cqG)$ denotes the set of equivalence classes of irreducible unitary representations of the compact quantum group $\cqG$ dual to $\dqG$.  Let $(U_i)_{i\in\mathbb{N}}$ be a sequence of finite-dimensional irreducible unitary representations of $\dqG$ in $\M_{n_i}$ with $\lim_{i\to \infty}\ n_i =\infty$. Then one can construct a family of bistochastic quantum channels $\mathcal F = (\Psi_i:\M_{n_i} \to \M_{n_i})_{i\in \mathbb{N}}$ which is a bounded degree quantum expander family with quantum edge expansion 
    \begin{equation*}
       h_Q(\mathcal F):= \inf_{i\in \mathbb{N}}h_Q(\Psi_i)\geq K\epsilon^2, 
    \end{equation*}
    where $K = (4\sum_{x \in E} \dim x)^{-1}$.
\end{theoremA}

When $\dqG$ is a classical discrete group, we have $\dim x = 1$ for all $x \in \text{Irred}(\cqG)$, giving $K = \frac{1}{4|E|}$, which is the familiar constant appearing in previous constructions of expander graphs using property (T) groups.  When $\dqG$ is a genuine discrete quantum group (i.e., not actually a group), the resulting family of expanders $\mathcal F$  appearing above has the interesting feature that it may no longer be of mixed unitary type.  Thus Theorem A can be seen as means to construct explicit quantum expanders of a rather different flavour than those currently existing in the literature.  We use Theorem A to construct and examine quantum expanders arising from compact bicrossed product quantum groups in Section \ref{sec:bicrossed}.

In recent years, a theory of (finite) {\it quantum graphs} has emerged, based on the notion of a {\it quantum adjacency matrix}  \cite{MRV18, Was23, Daw24}.  The fundamental idea is to consider a non-commutative vertex set modelled by a finite dimensional von Neumann algebra $M$, equipped with a completely positive linear map $A:M \to M$ satisfying a certain Schur-idempotency condition.  The pair $(M,A)$ is then called a quantum graph.   We review these ideas in Section \ref{sec:qam}.  From the quantum adjacency matrix view on quantum graphs, notions like regularity, connectedness, and spectral gap all have natural generalizations.  The study of quantum expanders from the perspective of quantum adjacency matrices has yet to be studied in any detail in the literature, and we provide some first contributions in this direction.  First, we show that the commonly used notions of degree ( = the Kraus rank) for bistochastic quantum channels on $\M_n$ and for quantum adjacency matrices on $\M_n$ agree (Proposition \ref{prop:kr=degree}).  Next, using Wasilewski's recent formulation of quantum Cayley graphs using quantum adjacency matrices \cite{Was23} and the notion of a coideal in a discrete quantum group, we  formulate a natural notion of a \textit{quantum Schreier graph}.   We then use these ideas to prove Theorem \ref{thm:spectral-gap-schreier}, which can be regarded as a more direct generalization of Margulis' original construction of expander graphs as Schreier coset graphs:
\begin{theoremB}
    Let $\dqG$ be an infinite property (T) discrete quantum group with symmetric Kazhdan pair $(E,\epsilon)$. Denote by $A: \ell^\infty(\dqG) \to \ell^\infty(\dqG)$ the convolution operator $Ax = p_E \star x$, ($x \in \ell^\infty(\dqG)$), describing the quantum Cayley graph $\mathcal C(\cqG, E)$, where 
 $p_E$ is the central projection associated to $E$.  For  each $i \in \mathbb N$, let $M_i \subseteq \ell^\infty(\dqG)$ be a  finite-dimensional coideal, and assume moreover $\lim_{i \to \infty}\dim(M_i)=\infty$.  Then the family  $(M_i,A|_{M_i})$ of {\it quantum Schreier graphs } forms a quantum expander family of bounded degree.
\end{theoremB}

As we explain in Section \ref{sec: Schreier}, when $\dqG$ is classical, our quantum Schreier graphs are in  one-to-one correspondence with the finite Schreier coset graphs of $\cqG$, recovering Margulis' result.  In quantum case, this will generally yield 
quantum expander graphs over multimatrix algebras.  

The remainder of the paper is organized as follows: Section \ref{background} contains a detailed discussion of classical expander graphs, Cheeger inequalities, and their quantisations.  We 
 also provide a detailed recap of the relevant facts about quantum groups, their associated quantum channels, and property (T) in Section \ref{sec:qg}.  Section \ref{sec:edge-expanders} is dedicated to proving Theorem A, while Section \ref{sec:bicrossed} applies these results to examples coming from bicrossed products.  Section \ref{sec: Schreier} is then dedicated to introducing the notion of a quantum Schreier graph, and proving Theorem B.  Finally, we conclude the paper with a brief discussion and outlook for future work in Section \ref{sec:discussion}.

\section{Background}\label{background}

\subsection{Classical expanders} \label{subsec:c-expanders}
In this section, we recall 
the definition of a (classical) expander graph, and outline the construction of an expander family from a property (T) group due to Margulis~\cite{Mar73}. Our main reference for this section is the book of Kowalski~\cite{Kow19}.

\subsubsection{Expander graphs}

\begin{definition}
    A (simple undirected loop-free) \textit{graph} is a pair of sets $G=(V,E)$, where $V$ is the set of vertices and $E$ is the set of edges, corresponding to pairs of distinct elements from $V$. We assume $V$ is finite unless otherwise specified. For a graph $G$, we use the following notation:
    \begin{itemize}
        \item The \textit{neighbourhood} of a vertex $v\in V$ is $N(v)=\set*{u\in V}{\{u,v\}\in E}$.
        \item We say $G$ is \textit{$d$-regular} if $\abs*{N(v)}=d$ for all $v\in V$.
        \item Given $X,Y\subseteq V$, write $E(X,Y)=\set*{\{x,y\}\in E}{x\in X,\,y\in Y}$ for the set of edges between $X$ and $Y$.
        \item The \textit{adjacency matrix}, written $A(G)$, is the $|V|\times|V|$ matrix with entries labelled by pairs of vertices such that $A(G)_{x,y}=1$ iff $\{x,y\}\in E$ and $0$ otherwise.
    \end{itemize}
\end{definition}

\begin{definition}
    The \textit{expansion constant} (or Cheeger constant) of a graph $G$ is
    $$h(G)=\min\set*{\frac{|E(U,V \backslash U)|}{\min\{|U|,|V \backslash U|\}}}{\varnothing\subsetneq U\subsetneq V}.$$
\end{definition}

Note that we have to restrict to connected graphs for this to be non-zero: the expansion constant of any disconnected graph is $0$.

The expansion constant is closely related to the spectral gap of the adjacency matrix of the graph. This relationship gives an important tool for studying expansion properties. First, note that if $G$ is $d$-regular, then the largest eigenvalue of $A(G)$ is $d$, and if $G$ is connected, the multiplicity of this eigenvalue is $1$. Denote the second-largest eigenvalue of $A(G)$ by $\lambda_2(G)$. Then, the spectral gap is $d-\lambda_2(G)$, which relates to $h(G)$ as follows.

\begin{theorem}[Discrete Cheeger inequalities~\cite{Dod84,AM85}]\label{thm:classical-cheeger}
    Let $G$ be a connected $d$-regular graph. Then,
    $$\frac{1}{2}(d-\lambda_2(G))\leq h(G)\leq\sqrt{2d(d-\lambda_2(G))}.$$
\end{theorem}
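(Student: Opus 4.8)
The plan is to prove the two inequalities separately, treating the left-hand bound (spectral gap $\le$ expansion) as the easy direction and the right-hand Cheeger bound as the hard one. Throughout I work with the combinatorial Laplacian $L = dI - A(G)$, whose smallest eigenvalue is $0$ (with eigenvector the all-ones vector $\mathbf 1$, since $G$ is $d$-regular) and whose second-smallest eigenvalue is exactly the spectral gap $d - \lambda_2(G)$. The one identity I will use repeatedly is the Dirichlet form
\[
\langle f, Lf\rangle = \sum_{\{u,v\}\in E}(f(u)-f(v))^2,
\]
valid for any $f : V \to \mathbb R$.

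For the lower bound $\tfrac12(d-\lambda_2(G)) \le h(G)$, the idea is to feed a cut into the Rayleigh-quotient characterization $d - \lambda_2(G) = \min_{0 \ne f \perp \mathbf 1}\langle f, Lf\rangle/\langle f,f\rangle$. Given $\varnothing \subsetneq U \subsetneq V$ with $|U| \le |V|/2$, I set $f = \mathbf 1_U - (|U|/|V|)\mathbf 1$, which is orthogonal to $\mathbf 1$. A direct computation gives $\langle f, Lf\rangle = |E(U, V\setminus U)|$ (only cut edges contribute, each contributing $1$) and $\langle f,f\rangle = |U|(|V|-|U|)/|V|$. Since $|U| \le |V|/2$ forces $|V|/(|V|-|U|) \le 2$, the Rayleigh quotient is at most $2|E(U,V\setminus U)|/|U|$, and minimizing over $U$ yields $d - \lambda_2(G) \le 2h(G)$.

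The hard direction $h(G) \le \sqrt{2d(d-\lambda_2(G))}$ proceeds by a thresholding (sweep) argument. First I would take an eigenfunction $f$ for $\lambda_2(G)$ and, after possibly replacing $f$ by $-f$, pass to its positive part $g = \max(f,0)$, arranged so that its support $\{g > 0\}$ has at most $|V|/2$ vertices. The first technical step is to check that truncation does not increase the Rayleigh quotient, i.e.\ $\langle g, Lg\rangle \le (d-\lambda_2(G))\langle g,g\rangle$; this follows by computing $\langle g, Lf\rangle$ in two ways and using that $g \ge f$ everywhere, with equality on the support. Next I apply Cauchy--Schwarz to $B := \sum_{\{u,v\}\in E}|g(u)^2 - g(v)^2|$, factoring $|g(u)^2-g(v)^2| = |g(u)-g(v)|\,|g(u)+g(v)|$, to obtain $B \le \sqrt{\langle g, Lg\rangle}\,\bigl(\sum_{\{u,v\}}(g(u)+g(v))^2\bigr)^{1/2} \le \sqrt{(d-\lambda_2(G))}\,\sqrt{2d}\,\langle g,g\rangle$, where the last factor uses $(g(u)+g(v))^2 \le 2(g(u)^2+g(v)^2)$ together with $d$-regularity.

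Finally, I would convert $B$ into cut data via a co-area (layer-cake) identity: writing $S_t = \{v : g(v)^2 > t\}$, one has $B = \int_0^\infty |E(S_t, V\setminus S_t)|\,dt$ and $\langle g,g\rangle = \int_0^\infty |S_t|\,dt$. Because every $S_t$ lies in the support of $g$ and hence satisfies $|S_t| \le |V|/2$, the definition of $h(G)$ gives $|E(S_t, V\setminus S_t)| \ge h(G)|S_t|$ for each $t$; integrating and combining with the Cauchy--Schwarz bound yields $h(G)\langle g,g\rangle \le B \le \sqrt{2d(d-\lambda_2(G))}\langle g,g\rangle$, and dividing by $\langle g,g\rangle > 0$ finishes. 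I expect the main obstacle to be this hard direction, and within it the truncation inequality $\langle g, Lg\rangle \le (d-\lambda_2(G))\langle g,g\rangle$ — the sign bookkeeping there is the one genuinely subtle point, whereas the co-area identity and the Cauchy--Schwarz estimate are routine once set up.
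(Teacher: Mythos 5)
The paper itself offers no proof of this theorem --- it is quoted from the literature with citations to Dodziuk and Alon--Milman --- so there is no internal argument to compare against; your write-up is a correct, self-contained rendition of the standard spectral proof from those references. The constants all check out: in the easy direction, $\langle f, Lf\rangle = |E(U,V\setminus U)|$ and $\langle f,f\rangle = |U|(|V|-|U|)/|V|$ for $f = \mathbf{1}_U - (|U|/|V|)\mathbf{1}$, and $|U|\le |V|/2$ gives the factor $2$, yielding $d-\lambda_2(G)\le 2h(G)$; in the hard direction, $B \le \sqrt{(d-\lambda_2(G))}\,\sqrt{2d}\,\langle g,g\rangle$ combined with the co-area bound $B \ge h(G)\langle g,g\rangle$ gives exactly $h(G)\le\sqrt{2d(d-\lambda_2(G))}$. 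Your identification of the truncation inequality as the one subtle step is apt, and your sketch closes correctly: the clean way to finish it is $\langle g, Lg\rangle - \langle g, Lf\rangle = \langle g, L(g-f)\rangle = -\langle g, A(g-f)\rangle \le 0$, since $g$ and $g-f$ have disjoint supports (which kills the $dI$ term) and $A(G)$ has nonnegative entries, while $\langle g, Lf\rangle = (d-\lambda_2(G))\langle g,g\rangle$ because $(Lf)(v) = (d-\lambda_2(G))g(v)$ on the support of $g$. Two housekeeping points deserve an explicit line in a polished version: since $f\perp\mathbf{1}$ and $f\ne 0$, both sign sets are nonempty, so after the sign flip $g\ne 0$ and the final division by $\langle g,g\rangle$ is legitimate; and every level set $S_t$ with $t\ge 0$ lies inside $\{g>0\}$, hence has at most $|V|/2$ vertices, which is precisely what licenses $|E(S_t,V\setminus S_t)|\ge h(G)|S_t|$ under the paper's definition of $h(G)$ with $\min\{|U|,|V\setminus U|\}$ in the denominator.
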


There is a slightly stronger form of the upper bound, given by $\sqrt{d^2-\lambda_2(G)^2}$ due to~\cite{Moh89}. See~\cite{Kow19} for extensions of these inequalities to non-regular graphs.

\begin{definition}
Let $\eta >0$ and $d \in \mathbb N$.  We say a graph $G$ is a \textit{$(d,\eta)$-expander} if it is $d$-regular and $h(G)\geq \eta$.    A \textit{$(d,\eta)$-expander family} is a sequence $(G_n)$ of $d$-regular graphs such that $|G_n|\rightarrow\infty$ and $h(G_n)\geq\eta$.
\end{definition}

An important use of expanders is in \textit{expander random walk sampling}. Informally, a random walk on an expander graph reaches any vertex in the graph very quickly. One application of this is to reduce the amount of randomness needed to take an average of a function over the vertices, by taking the average over a short random walk~\cite{AKS87}. See~\cite{Gil98} for the formal statement and a full proof. Importantly for our context, in the random walk picture, a $d$-regular graph $G$ gives rise to a bistochastic matrix $\frac{1}{d}A(G)$, which is the description which quantises most naturally.

\subsubsection{Property (T) groups and expanders} \label{subsubsec:prop-T-expanders}

A natural way to construct examples of regular graphs is from groups.

\begin{definition}
    Let $\Gamma$ be a finitely-generated group with a generating set $S$ that is symmetric, \textit{i.e.} $S^{-1}=S$. The \textit{Cayley graph} of $\Gamma$ with respect to $S$ is the graph $\mc{C}(\Gamma,S) = (\Gamma,E)$ with $E = \set*{\{g,gs\}}{g\in\Gamma,\;s\in S}$.
\end{definition}

Note that $\mc{C}(\Gamma,S)$ is finite if $\Gamma$ is finite and is infinite for infinite finitely-generated groups. Also, the Cayley graph can be defined with respect to an arbitrary set $S$, but if it is not a generating set, $\mc{C}(\Gamma,S)$ will not be connected. As will be noted below for quotient spaces, the notion of a Cayley graph can be extended to sets on which $\Gamma$ acts.

The Margulis construction of expanders \cite{Mar73, Lub94} relies on a group with Kazhdan's property (T). We give only the definition for discrete groups here, as this is relevant to our situation.

\begin{definition}
    Let $\Gamma$ be a discrete group and $\pi:\Gamma\rightarrow B(\mathcal H)$ be a unitary representation. For a finite set $S\subseteq\Gamma$ and $\epsilon>0$, we say that $v\in \mathcal H\backslash\{0\}$ is an \textit{$(S,\epsilon)$-invariant vector} for $\pi$ if $\norm{\pi(g)v-v}<\epsilon\norm{v}$ for all $g\in S$.

    We say $\Gamma$ has \textit{property (T)} if for every unitary representation $\pi$ on a Hilbert space $\mathcal H$ such that there exists an $(S,\epsilon)$-invariant vector for all finite sets $S\subseteq\Gamma$ and $\epsilon>0$, there exists an invariant vector $v\in \mathcal H\backslash\{0\}$, that is $\pi(g)v=v$ for all $g\in\Gamma$. 
\end{definition}

Property (T) is equivalent to the existence of a \textit{Kazhdan pair}: a fixed pair $(S,\epsilon)$ with $S$ a finite symmetric generating set, and $\epsilon >0$ such that if a unitary representation has an $(S,\epsilon)$-invariant vector, then it has an invariant vector.

We now state the main result of Margulis. 

\begin{theorem}[Margulis \cite{Mar73}]
    Let $\Gamma$ be a discrete group with property (T), and let $(S,\epsilon)$ be a Kazhdan pair for $\Gamma$ such that $S$ is a generating set. For any  subgroup of finite index $H\le\Gamma$,
    $$h(\mc{C}(\Gamma/H,S))\geq\frac{\epsilon^2}{4}.$$
\end{theorem}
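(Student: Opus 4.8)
The plan is to derive the combinatorial expansion bound directly from property (T), exploiting the crucial feature that a \emph{single} Kazhdan pair $(S,\epsilon)$ controls all finite-index quotients simultaneously. Fix a finite-index subgroup $H\le\Gamma$, write $X=\Gamma/H$ for the (finite) coset space, and let $\pi$ be the quasi-regular permutation representation of $\Gamma$ on $\ell^2(X)$, given by $(\pi(g)f)(x)=f(g^{-1}\cdot x)$. Since $\Gamma$ acts transitively on $X$, the only $\pi$-invariant vectors are the constant functions, so the subrepresentation on $\ell^2_0(X)=\{f:\sum_{x}f(x)=0\}$ has \emph{no} nonzero invariant vector. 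Because $(S,\epsilon)$ is a Kazhdan pair, the contrapositive of the defining implication applies: $\ell^2_0(X)$ contains no $(S,\epsilon)$-invariant vector, i.e.\ for every $v\in\ell^2_0(X)\setminus\{0\}$ there is some $s\in S$ with $\|\pi(s)v-v\|\ge\epsilon\|v\|$.

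Next I would convert a cut into a test vector. Given $\varnothing\subsetneq U\subsetneq X$, set $v=\mathbf 1_U-\tfrac{|U|}{|X|}\mathbf 1_X\in\ell^2_0(X)$, which is nonzero. A direct computation gives $\|v\|^2=\tfrac{|U|\,(|X|-|U|)}{|X|}$, and since $\mathbf 1_X$ is $\pi$-invariant one has $\pi(s)v-v=\pi(s)\mathbf 1_U-\mathbf 1_U$ for every $s$. The key bookkeeping identity is that $\|\pi(s)\mathbf 1_U-\mathbf 1_U\|^2$ equals the number of $x\in X$ for which exactly one of $x,\,s^{-1}x$ lies in $U$, i.e.\ the number of $s$-labelled directed edges of the Schreier graph $\mathcal C(X,S)$ crossing the cut $(U,X\setminus U)$. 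Summing over the symmetric set $S$, each undirected crossing edge is counted exactly twice (once via $s$, once via $s^{-1}$), so $\sum_{s\in S}\|\pi(s)\mathbf 1_U-\mathbf 1_U\|^2=2\,|E(U,X\setminus U)|$, where the right-hand side counts edges of the $|S|$-regular (multi)graph with multiplicity.

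Finally I would assemble the estimate. By the first step some single $s\in S$ satisfies $\|\pi(s)v-v\|^2\ge\epsilon^2\|v\|^2$, and since one term is bounded above by the whole sum,
\[
2\,|E(U,X\setminus U)|=\sum_{s\in S}\|\pi(s)v-v\|^2\ge\epsilon^2\|v\|^2=\epsilon^2\,\frac{|U|\,(|X|-|U|)}{|X|}.
\]
Assuming without loss of generality that $|U|\le|X|/2$, one has $\tfrac{|X|-|U|}{|X|}\ge\tfrac12$, hence $|E(U,X\setminus U)|\ge\tfrac{\epsilon^2}{4}|U|=\tfrac{\epsilon^2}{4}\min\{|U|,|X\setminus U|\}$; taking the minimum over all $U$ yields $h(\mathcal C(\Gamma/H,S))\ge\epsilon^2/4$.

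The genuinely conceptual input is the first step: property (T), through a fixed Kazhdan pair, furnishes a spectral gap that is \emph{uniform over the entire infinite family} of quotients $\Gamma/H$, and this uniformity — not any single estimate — is what will make the family an expander family. The only technical care needed is the edge-counting identity, where the factor of two and the distinction between directed and undirected edges must be tracked honestly; in particular, involutions in $S$ are precisely what force the $\sum_{s\in S}$ formulation (rather than bounding $|E|$ by a single $|\partial_s U|$) and hence the constant $\epsilon^2/4$.
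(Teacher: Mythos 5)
Your proof is correct. It is worth noting that the paper never proves this statement itself --- it is quoted from Margulis --- and that the paper's own machinery establishes its generalizations by a different two-step route: there (see the outline in Section \ref{subsubsec:q-expander-from-c-expander}, Proposition \ref{prop:main-estimate}, and Theorems \ref{thm:lambda2} and \ref{thm:spectral-gap-schreier}) the Kazhdan pair is used to bound the \emph{second eigenvalue} --- for any unit vector orthogonal to the invariant vectors, some element of the Kazhdan set moves it by at least $\epsilon$, and an averaging step (Lemma \ref{lem:technical}) converts this into $\lambda_2\le d-\epsilon^2/2$ for $A=\sum_{s\in S}\pi(s)$ on $\ell^2_0(\Gamma/H)$ --- after which the edge bound $h\ge\frac{1}{2}(d-\lambda_2)\ge\epsilon^2/4$ is delegated to the Cheeger inequality (Theorem \ref{thm:classical-cheeger}). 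You instead apply the Kazhdan property directly to the centered cut indicator $v=\mathbf 1_U-\frac{|U|}{|X|}\mathbf 1_X$ and unroll the Cheeger lower bound by hand through the identity $\sum_{s\in S}\|\pi(s)\mathbf 1_U-\mathbf 1_U\|^2=2\,|E(U,X\setminus U)|$. The two arguments are ultimately the same quadratic-form estimate, since $\langle v,Av\rangle=\sum_{s\in S}\bigl(\|v\|^2-\tfrac{1}{2}\|\pi(s)v-v\|^2\bigr)$, so your edge-counting identity is precisely the standard proof of the easy half of Cheeger specialized to indicator vectors; but your route is self-contained, makes the constant $\epsilon^2/4$ transparent, and avoids invoking Cheeger as a black box, while the paper's factorization through spectral gap is exactly what survives quantization, where cuts become projections and (in the coideal setting of Theorem \ref{thm:spectral-gap-schreier}) no Cheeger inequality is even available, so only the spectral statement can be made. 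All your steps check out: transitivity kills invariant vectors in $\ell^2_0(X)$, the norm computation $\|v\|^2=|U|(|X|-|U|)/|X|\ge|U|/2$ for $|U|\le|X|/2$ is right, the factor-of-two bookkeeping over $s$ and $s^{-1}$ (including involutions) is handled correctly, and your explicit multigraph caveat is the honest reading of the statement, since the paper's edge set $E=\{\{gH,sgH\}\}$ silently collapses multiplicities.
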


In the above theorem, $\mc{C}(\Gamma/H,S)$ denotes the induced {\it Schreier coset graph} on the quotient space $\Gamma/H$.  The vertices are given by the cosets $\{gH: g \in \Gamma\} = \Gamma/H$, and the  edges are given by 
\[
E = \set{\{gH, sgH\}}{ s \in S}.
\]
Hence, this construction gives rise to an $(|S|,4^{-1}\epsilon^2)$-expander, and if $\Gamma$ has a sequence of subgroups with arbitrarily large finite index, then the construction gives an $(|S|,4^{-1}\epsilon^2)$-expander family.

\begin{example}
    The group $\Gamma=\mathrm{SL}_3(\Z)$ has property (T), and admits a Kazhdan pair $(S,\epsilon)$, where $S=\set*{I\pm e_{ij}}{i\ne j}$, for $e_{ij}$ the canonical matrix units, and $\epsilon=\frac{1}{42\sqrt{3}+860}>0.001$~\cite{Sha99,Kas05}. Also, by taking the quotients $\mathrm{SL}_3(\Z)\rightarrow\mathrm{SL}_3(\Z_p)$ for $p$ prime we get an infinite sequence of quotient groups of strictly increasing order. Thus, by the theorem above, we get a $(12,0.25 \times 10^{-6})$-expander family.
\end{example}

In Section \ref{sec: Schreier}
 we extend Margulis' theorem to the setting of discrete quantum groups (cf. Theorem \ref{thm:spectral-gap-schreier}).
 
\subsection{Quantum expanders} \label{subsec:q-expanders}

In this section, we define what we mean by a quantum expander and recall the construction of Harrow~\cite{Har07} that allows us to construct a quantum expander from a classical expander constructed from a finite group, as in the previous section. In Section \ref{sec:edge-expanders} we generalise this class of quantum expanders to those arising from discrete quantum groups.

\subsubsection{Quantum bistochastic maps}

The notion of a quantum expander was first introduced in~\cite{Has07a}. There are two different ways that quantum expanders are represented in the literature: they can be represented as tuples of operators, as in~\cite{Has07a,Has07b,LQWWZ22arxiv}, or they can be represented as quantum channels, as in~\cite{Har07}. We use the latter definition, as all of the spectral and expansion properties are most readily phrased in this setting, and it naturally generalises the presentation of an expander graph as a bistochastic matrix.

\begin{definition}  Let $\mathcal K$ be a finite-dimensional Hilbert space.  We call a linear map ${\Phi:B(\mathcal K) \to B(\mathcal K)}$ a \textit{quantum bistochastic map} if it is completely positive, trace-preserving, and unital. We say $\Phi$ is \textit{undirected} if it is hermitian with respect to the Hilbert-Schmidt inner product. We say $\Phi$ is \textit{connected} if its $1$-eigenspace is one-dimensional. 
\end{definition}

\begin{example}[\cite{LQWWZ22arxiv}]\label{ex:graph-to-channel}
    Given a $d$-regular graph $G=(V,E)$, there is a natural way to construct a quantum bistochastic map from it. First, we construct a family of disjoint vertex cycle covers for $G$ inductively. Pick an arbitrary vertex cycle cover $G_1$ of $G$, that is a $2$-regular subgraph that contains all the vertices of $G$.  It must be composed of a disjoint union of cycles. Now, remove the edges of $G_1$ from $G$, giving a $d-2$-regular graph. We repeat this process $\floor{d/2}$ times to get disjoint cycle covers $G_1,\ldots,G_{\floor{d/2}}$. If $d$ is even, there are no remaining edges. If $d$ is odd, the remaining graph is $1$-regular, consisting of a disjoint union of pairs of vertices with one edge between them. Let this graph be $G'$. For each $i$, we can write the adjacency matrix $A(G_i)=P_{2i-1}+P_{2i}$, where $P_{2i-1}$ is a permutation matrix corresponding to traversing the cycles in one direction, and $P_{2i}=P_{2i-1}^\ast$ is the permutation matrix corresponding to travelling in the other direction. In the case that $d$ is odd, let $P_d=A(G')$: this is a hermitian permutation matrix. Now, we can define the map $\Phi_G:B(\C^V)\rightarrow B(\C^V)$ by
    $$\Phi_G(\rho)=\frac{1}{d}\sum_{i=1}^dP_i\rho P_i^\ast$$
    Note that this map is not uniquely determined by $G$ but depends on the choice of permutations $P_i$, which are not uniquely determined by the above procedure. By construction, $\Phi_G$ is a completely positive linear map, and as the permutation matrices are unitary, this is also a mixed-unitary channel, and hence both trace-preserving and unital. Thus, $\Phi_G$ is a quantum bistochastic map. 

    Also, we see that the action of $\Phi_G$ on the diagonal subspace $D=\spn\set*{\ketbra{x}}{x\in V}\cong\C^V$ is exactly the action of the normalised adjacency matrix $\frac{1}{d}A(G)$ on $\C^V$ (where $\mathbb C^V$ is identified with the diagonal subalgebra of $B(\mathbb C^V)$). In fact,
    $$\Phi_G(\ketbra{x})=\frac{1}{d}\sum_{i=1}^dP_i\ketbra{x}P_i^\ast=\frac{1}{d}\sum_{y\in N(x)}\ketbra{y}.$$
    Next, we see that $\Phi_G$ is undirected. For $i=1,\ldots,\floor{d/2}$, $P_{2i-1}^\ast=P_{2i}$ and if $d$ is odd, $P_d^\ast=P_d$, so $\Phi_G$ is hermitian.
    
    However, note that $\Phi_G$ may not be connected even if $G$ is. For example, consider the graph $G$ with two vertices and one edge connecting them. Then, the adjacency matrix $A(G)=\begin{bmatrix}0&1\\1&0\end{bmatrix}$. As such, $\Phi_G(\rho)=A(G)\rho A(G)$, whose $1$-eigenspace is $\spn\{I,A(G)\}$ and hence has dimension two. Nevertheless, as will be seen in the next subsection, if we see the $P_i$ as corresponding to the generators of a group in the left-regular representation, we can restrict to a block corresponding to an irreducible representation and hence recover connectedess (and expansion).
\end{example}

We now come to a natural notion of expansion constant for quantum bistochastic maps introduced in \cite{Has07b}.  See also \cite{LQWWZ22arxiv}.

\begin{definition} \label{def:quantumexpansion}
    The \textit{quantum (edge) expansion} of a quantum bistochastic map $\Phi:B(\mc{K}) \to B(\mc{K})$ is
    $$h_Q(\Phi)=\min\set*{\frac{\Tr\squ*{(I-\Pi)\Phi(\Pi)}}{\min\{\Tr(\Pi),\Tr(I-\Pi)\}}}{\Pi\in B(\mc{K})\text{ projector},\;\Pi\neq 0,I}.$$
\end{definition}

\begin{remark}
One can make sense of the above definition of expansion in the  setting of a finite von Neumann algebra $M$ equipped with a faithful normal tracial state $\tau:M \to \mathbb C$, and a $\tau$-preserving completely positive map $\Phi:M \to M$.  In this case, $\tau$ replaces the usual trace $\Tr$ in Definition \ref{def:quantumexpansion}.  The advantage of this more general setup is that it also captures the classical notion of expansion for $d$-regular graphs $G = (V,E)$, where one takes $M = C(V)$ (the algebra of functions on $V$) and $\tau$ the uniform probability on $V$.  See the discussion in \cite{Has07b} for more details.     
\end{remark}

Note that, as for a classical graph, there is nontrivial expansion only if the map is connected, according to our definition of connectedness.

\begin{lemma}
    If a quantum bistochastic map $\Phi$ is not connected, $h_Q(\Phi)=0$.
\end{lemma}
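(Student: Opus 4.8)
The plan is to exploit that $h_Q(\Phi)$ is, by construction, a minimum of nonnegative quantities, and to show this minimum is attained at $0$ on a suitable projector. For any projector $\Pi$, complete positivity gives $\Phi(\Pi)\geq 0$, and since $I-\Pi\geq 0$ as well, each quotient in Definition \ref{def:quantumexpansion} is $\geq 0$; hence $h_Q(\Phi)\geq 0$ and it suffices to exhibit one projector $\Pi\neq 0,I$ with $\Tr[(I-\Pi)\Phi(\Pi)]=0$. The key reduction is that any \emph{fixed} projector does the job: if $\Phi(\Pi)=\Pi$, then $\Tr[(I-\Pi)\Phi(\Pi)]=\Tr[(I-\Pi)\Pi]=\Tr[\Pi-\Pi^2]=0$, while $\min\{\Tr(\Pi),\Tr(I-\Pi)\}>0$ precisely because $\Pi\neq 0,I$. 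So the whole problem reduces to producing a nontrivial fixed projector.

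This is where the non-connectedness hypothesis enters. Write $\mathcal F=\{X\in B(\mathcal K):\Phi(X)=X\}$ for the $1$-eigenspace; by assumption $\dim\mathcal F\geq 2$, so $\mathcal F$ strictly contains $\mathbb C I$. I would next show that $\mathcal F$ is a unital $*$-subalgebra of $B(\mathcal K)$. That $\mathcal F$ is a $*$-closed subspace is immediate: it is a linear eigenspace, and since $\Phi$ is positive it is $*$-preserving, i.e.\ $\Phi(X^*)=\Phi(X)^*$, so $X\in\mathcal F\Rightarrow X^*\in\mathcal F$. Closure under products is the substantive point, and is where I expect the real work to lie.

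For multiplicativity I would invoke the equality case of the Kadison--Schwarz inequality together with trace preservation. Since $\Phi$ is completely positive and unital, the inequality $\Phi(X^*X)\geq\Phi(X)^*\Phi(X)$ holds. Taking $X=X^*\in\mathcal F$ self-adjoint and fixed gives $\Phi(X^2)\geq X^2$, and since $\Phi$ is trace-preserving we have $\Tr[\Phi(X^2)-X^2]=0$; a positive operator of trace zero vanishes, so $\Phi(X^2)=X^2=\Phi(X)^*\Phi(X)$. Thus every self-adjoint fixed point lies in the multiplicative domain $\mathcal M_\Phi=\{a:\Phi(a^*a)=\Phi(a)^*\Phi(a),\ \Phi(aa^*)=\Phi(a)\Phi(a)^*\}$; writing a general element of $\mathcal F$ as $X_1+iX_2$ with $X_1,X_2$ self-adjoint and fixed, and using that $\mathcal M_\Phi$ is a linear space, shows $\mathcal F\subseteq\mathcal M_\Phi$. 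By Choi's theorem on multiplicative domains, $\Phi$ restricts to a homomorphism on $\mathcal M_\Phi$, so for $X,Y\in\mathcal F$ one gets $\Phi(XY)=\Phi(X)\Phi(Y)=XY$, i.e.\ $XY\in\mathcal F$. Hence $\mathcal F$ is a finite-dimensional unital $*$-subalgebra.

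Finally I would extract the projector. A finite-dimensional $*$-subalgebra of $B(\mathcal K)$ with $\dim\mathcal F\geq 2$ contains a non-scalar self-adjoint element $Y$; being a $C^*$-algebra, $\mathcal F$ is closed under the continuous functional calculus, so the spectral projections of $Y$ lie in $\mathcal F$. Since $Y$ has at least two distinct eigenvalues, one such spectral projection is a nontrivial projector $\Pi\in\mathcal F$ with $\Pi\neq 0,I$, whence $\Phi(\Pi)=\Pi$; by the reduction in the first paragraph this forces $h_Q(\Phi)=0$. The only genuine obstacle is establishing the algebra structure of the fixed-point space; everything else is bookkeeping, and the same argument runs verbatim in the tracial von Neumann algebra setting of the Remark, with $\tau$ replacing $\Tr$.
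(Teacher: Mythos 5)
Your proof is correct, but it takes a genuinely different route from the paper's. The paper never produces a \emph{fixed} projector: it picks a non-scalar hermitian fixed point $X$ (using only that $\Phi$ is hermitian-preserving), shifts it to a positive, non-full-rank fixed point $P$ by subtracting $\lambda_{\min}(X)I$, takes $\Pi$ to be the support projection of $P$, and uses the sandwich $\lambda\Pi\le P\le\mu\Pi$ together with positivity of $\Phi$ to conclude $\Phi(\Pi)\le\frac{\mu}{\lambda}\Pi$, which already forces $\Tr\left[(I-\Pi)\Phi(\Pi)\right]=0$ even though $\Pi$ itself is not shown to be fixed. That argument needs only positivity and unitality of $\Phi$ --- neither trace preservation nor the Schwarz inequality enters. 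Your argument, via the equality case of Kadison--Schwarz plus trace preservation and Choi's multiplicative-domain theorem, proves the stronger structural fact that the fixed-point set is a unital $*$-subalgebra and then extracts an honestly fixed nontrivial spectral projection; all the steps check out, including the passage from self-adjoint fixed points to all of $\mathcal F$ via real and imaginary parts and the linearity of the multiplicative domain. Note that the structural fact you derive is essentially the content of the paper's Theorem \ref{thm:commutant} (fixed points of a unital CPTP map form the commutant of the Kraus operators, hence a von Neumann algebra), so you could have cited that to bypass the multiplicative-domain work entirely. In summary: the paper's proof is shorter and valid under weaker hypotheses, while yours yields more information (the fixed-point algebra and a fixed projector) and, as you observe, transfers verbatim to the tracial von Neumann algebra setting of the remark following Definition \ref{def:quantumexpansion}.
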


\begin{proof}
    First, note that $I$ is an eigenvector of $\Phi$ with eigenvalue $1$. If the dimension of the $1$-eigenspace is not one, there exists a hermitian matrix $X$ such that $\Phi(X)=X$, as $\Phi$ is hermitian-preserving. Thus, taking a linear combination of $I$ and $X$, there is a positive matrix $P$ that is not full rank such that $\Phi(P)=P$. Let $\Pi$ be the projector onto the support of $P$. Then there exist $\lambda,\mu>0$ such that $\lambda\Pi\leq P\leq\mu\Pi$. As such, $\Phi(\Pi)\leq\frac{1}{\lambda}\Phi(P)=\frac{1}{\lambda}P\leq\frac{\mu}{\lambda}\Pi$, giving that $\Tr\squ*{(I-\Pi)\Phi(\Pi)}\leq\frac{\mu}{\lambda}\Tr\squ*{(I-\Pi)\Pi}=0$ and hence $h_Q(\Phi)=0$.
\end{proof}

The notion of expansion above gives rise to a natural analogue of the Cheeger inequalities. As seen classically in the previous section, there is a relationship between the second-largest eigenvalue $\lambda_2(\Phi)$ and the quantum expansion.

\begin{theorem}[\cite{Has07b}]\label{thm:quantum-cheeger}
    Let $\Phi:B(\mc{H})\rightarrow B(\mc{H})$ be a connected undirected quantum bistochastic map. Then,
    $$\frac{1}{2}(1-\lambda_2(\Phi))\leq h_Q(\Phi)\leq\sqrt{2(1-\lambda_2(\Phi))}.$$
\end{theorem}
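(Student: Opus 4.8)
The plan is to treat $\Phi$ as a self-adjoint operator on the Hilbert space $B(\mathcal H)$ equipped with the Hilbert--Schmidt inner product $\langle A,B\rangle=\Tr(A^{*}B)$, and to reduce everything to the Dirichlet form of $I-\Phi$. Writing $\Phi(\rho)=\sum_k A_k\rho A_k^{*}$ with $\sum_k A_k^{*}A_k=\sum_k A_kA_k^{*}=I$ (trace-preservation and unitality), the first task is to record two identities. For any projector $\Pi$, trace-preservation gives $\Tr[(I-\Pi)\Phi(\Pi)]=\Tr(\Pi)-\Tr(\Pi\Phi(\Pi))=\langle \Pi,(I-\Phi)\Pi\rangle$, so the numerator defining $h_Q$ is exactly the Dirichlet form at $\Pi$; and a direct expansion using $\Phi=\Phi^{*}$ yields, for every hermitian $X$, the identity $\langle X,(I-\Phi)X\rangle=\tfrac12\sum_k\|[A_k,X]\|_2^{2}$. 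Since $\Phi$ is undirected and positive it is a self-adjoint operator preserving the real subspace of hermitian matrices, hence diagonalisable with real eigenvalues in $[-1,1]$, top eigenvalue $1$ attained (by connectedness) only at multiples of $I$, and a hermitian $\lambda_2$-eigenvector $X$ with $\Tr(X)=0$.

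For the lower bound I would argue variationally. Given a projector $\Pi\neq0,I$, decompose $\Pi=\tfrac{\Tr(\Pi)}{d}I+\Pi_0$ with $\Pi_0\perp I$ and $d=\dim\mathcal H$. Because $(I-\Phi)I=0$ and $\Pi_0$ lies in the span of eigenvectors with eigenvalue at most $\lambda_2$, the first identity and the spectral gap give $\Tr[(I-\Pi)\Phi(\Pi)]=\langle\Pi_0,(I-\Phi)\Pi_0\rangle\ge(1-\lambda_2)\|\Pi_0\|_2^{2}$. A short computation gives $\|\Pi_0\|_2^{2}=\Tr(\Pi)\Tr(I-\Pi)/d$, and dividing by $\min\{\Tr(\Pi),\Tr(I-\Pi)\}$ leaves the factor $\max\{\Tr(\Pi),\Tr(I-\Pi)\}/d\ge\tfrac12$. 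Taking the infimum over $\Pi$ yields $h_Q(\Phi)\ge\tfrac12(1-\lambda_2)$.

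The upper bound is the substantial direction, and I would follow the classical ``sweep'' proof of Cheeger's inequality transported to the eigenbasis of a truncated eigenvector. Replacing $X$ by $-X$ if necessary, one can ensure that $Y:=X_+$ (the positive part) has $\operatorname{rank}(Y)\le d/2$, so that every spectral projector $\Pi_t:=\mathbf 1_{\{Y>t\}}$ satisfies $\min\{\Tr(\Pi_t),\Tr(I-\Pi_t)\}=\Tr(\Pi_t)$ and hence $\Tr[(I-\Pi_t)\Phi(\Pi_t)]\ge h_Q(\Phi)\Tr(\Pi_t)$. The crucial---and genuinely non-commutative---step is a truncation estimate showing that passing from $X$ to its positive part does not inflate the Dirichlet form, namely $\langle Y,(I-\Phi)Y\rangle\le(1-\lambda_2)\|Y\|_2^{2}$. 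This I would obtain from the operator inequality $\|[A,X_+]\|_2^{2}\le\operatorname{Re}\langle[A,X_+],[A,X]\rangle$, valid for any $A$ because $\operatorname{Re}\langle[A,X_+],[A,X_-]\rangle=-\|X_-^{1/2}AX_+^{1/2}\|_2^{2}-\|X_+^{1/2}AX_-^{1/2}\|_2^{2}\le0$ (using $X_+X_-=0$); summing over the Kraus operators and using $(I-\Phi)X=(1-\lambda_2)X$ together with $\langle X_+,X\rangle=\|X_+\|_2^{2}$ gives the claim. I expect this truncation step to be the main obstacle, since the analogous classical statement (``truncation does not increase the Rayleigh quotient'') rests on a pointwise inequality with no literal non-commutative counterpart, which must be replaced by the trace identity above, and since the sign flip needed to guarantee $\operatorname{rank}(Y)\le d/2$ requires a short argument (at most one of $X$, $-X$ can have more than half its spectrum positive).

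With $Y$ in hand, I would diagonalise $Y=\sum_i y_i|i\rangle\langle i|$ and pass to the weights $w_{ij}=\tfrac12\sum_k|\langle i|A_k|j\rangle|^{2}$. Since $Y$ and all the $\Pi_t$ are diagonal in this basis, the commutator identity expresses both the Dirichlet form $\langle Y,(I-\Phi)Y\rangle=\sum_{ij}w_{ij}(y_i-y_j)^{2}$ and each boundary term $\Tr[(I-\Pi_t)\Phi(\Pi_t)]=\sum_{i,j}w_{ij}\,|\mathbf 1_{\{y_i>t\}}-\mathbf 1_{\{y_j>t\}}|$ purely through the $w_{ij}$, while unitality and trace-preservation give the row sums $\sum_j w_{ij}=\sum_j w_{ji}=\tfrac12$. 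This reduces the problem to the classical Cheeger inequality for the nonnegative ``function'' $(y_i)$ on the weighted graph with edge weights $w_{ij}$. Applying the co-area formula to $y^2$ gives $\sum_{ij}w_{ij}|y_i^{2}-y_j^{2}|=\int_0^\infty\big(\sum_{ij}w_{ij}|\mathbf 1_{\{y_i>u\}}-\mathbf 1_{\{y_j>u\}}|\big)2u\,du\ge h_Q(\Phi)\|Y\|_2^{2}$, while Cauchy--Schwarz bounds the same sum by $\big(\sum_{ij}w_{ij}(y_i-y_j)^2\big)^{1/2}\big(\sum_{ij}w_{ij}(y_i+y_j)^2\big)^{1/2}\le\sqrt{(1-\lambda_2)}\,\|Y\|_2\cdot\sqrt 2\,\|Y\|_2$, where one uses $(y_i+y_j)^2\le2(y_i^2+y_j^2)$ together with the row sums. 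Combining the two displays and cancelling $\|Y\|_2^2>0$ yields $h_Q(\Phi)\le\sqrt{2(1-\lambda_2(\Phi))}$.
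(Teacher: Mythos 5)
Your proof is correct and checks out at every step: the commutator identity $\langle X,(I-\Phi)X\rangle=\tfrac12\sum_k\|[A_k,X]\|_2^2$ (which also yields the boundary terms, since squared indicator differences equal their absolute values), the sign flip guaranteeing $\mathrm{rank}(X_+)\le d/2$, the truncation inequality via $X_+X_-=0$, and the co-area/Cauchy--Schwarz sweep. Note that the paper itself contains no proof of this theorem --- it is imported from \cite{Has07b} --- and your argument follows essentially the same route as that source (variational lower bound from the spectral gap; upper bound by truncating a hermitian traceless second eigenvector and reducing, via the weights $w_{ij}=\tfrac12\sum_k|\langle i|A_k|j\rangle|^2$ with row and column sums $\tfrac12$, to the classical weighted Cheeger sweep), so there is no in-paper proof to diverge from.
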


Note that the statement of the above theorem differs by a factor of $d$ from~\cref{thm:classical-cheeger}, the case of classical graphs. This is because $h_Q(\Phi)$ is defined for the quantum analogue of the bistochastic matrix $\frac{1}{d}A(G)$, and not for the adjacency matrix $A(G)$.  

In order to properly speak about expanders in the quantum setting, we must first have a good notion of degree for a quantum bistochastic map. 
Let $\mc{K}$ be a finite dimensional Hilbert space and $\Phi: B(\mc{K}) \to B(\mc{K})$ be a quantum channel.  Let $\{K_i\}_{i=1}^d \subseteq B(\mc{K})$ be a family of Kraus operators for $\Phi$.  That is, $\Phi$ can be represented as 
\[
\Phi(\rho) = \sum_{i=1}^d K_i\rho K_i^*. 
\]
The minimal size $1 \le d \le (\dim \mc{K})^2$ of a family of Kraus operators representing $\Phi$ is called the {\it Kraus rank} of $\Phi$.  Note that the Kraus rank of $\Phi$ is the rank of the Choi matrix $J_\Phi = \sum_{ij} e_{ij} \otimes \Phi(e_{ij}) \in B(\mathcal K) \otimes B(\mathcal K)$.  Following \cite{Har07, LaYo23, LQWWZ22arxiv}, we make the following definition.

\begin{definition} \label{def:quantum-degree}
The {\it degree} of quantum bistochastic map $\Phi:B(\mc{K}) \to B(\mc{K})$, $\deg \Phi$, is defined to be the Kraus rank of $\Phi$.     
\end{definition}

Note that for classical $d$-regular graphs $G$, the somewhat complicated extension of $\frac{1}{d}A(G)$ to a quantum bistochastic map $\Phi_G$ in Example \ref{ex:graph-to-channel} has the nice property that it is degree-preserving.  On the other hand, the ``canonical'' extension of $\frac{1}{d}A(G)$ to a different quantum bistochastic map $\tilde \Phi_G$ by using the Kraus operators $\set{d^{-\sfrac{1}{2}}e_{ij}}{\{i,j\} \in E}$ is not degree-preserving.

With the notion of degree in hand, we can finally define quantum expanders (also called quantum edge expanders). 

\begin{definition}\label{def:qexpander}
   Let $\eta > 0$ and $d \in \mathbb N$. We say a quantum bistochastic map $\Phi$ is a \textit{$(d,\eta)$-expander} if $\deg \Phi = d$ and $h_Q(\Phi)\geq \eta$.    
   A (constant degree $d$) \textit{$(d,\eta)$-expander family} is a sequence $\Phi_n:B(\mc{K}_n)\rightarrow B(\mc{K}_n)$ of$(d,\eta)$-expanders such that $\dim \mathcal K_n\rightarrow\infty$.
\end{definition}

\subsubsection{Quantum adjacency matrices} \label{sec:qam}

The above definition of degree for quantum bistochastic maps may seem somewhat unnatural, given that it assigns a global degree $d$ to {\it every} quantum bistochastic map $\Phi$ (even those coming from non-regular graphs).  Instead, one might try to restrict to a special class of quantum bistochastic maps $\Phi$ which have the property that there exists $d >0$ such that $d\Phi$ is some sort of quantum analogue of a ``$\{0,1\}$-matrix''.  This idea has been formalized in \cite{MRV18}, and leads to the notion of a {\it quantum graph} and a {\it quantum adjacency matrix}.  See also \cite{Daw24, Was23, CW22} for a treatment that is closer to ours below.  In the following definition, we restrict to what are called {\it tracial} quantum graphs, as they are what will appear in all our examples.

\begin{definition}{\cite{MRV18}}\label{def:qadjacency}
Let $M = \bigoplus_{a}\M_{n_a}$ be a finite-dimensional von Neumann algebra, let $\psi:M\to \mathbb C$ be the faithful trace given by $\psi = \sum_{a} n_a\Tr_{\M_{n_a}}(\cdot)$, and let $m:M \otimes M \to M$ denote the multiplication map.  A completely positive map $A:M \to M$ is called a {\it quantum adjacency matrix} if it satisfies 
\[
m(A \otimes A)m^* = A,
\]
where $m^*:M \to M \otimes M$ denotes the Hilbert space adjoint of $m$ induced by the Hilbert space structures coming from the traces $\psi$ and $\psi \otimes \psi$.  The pair $G = (M,A)$ is called a {\it quantum graph}. We say that a quantum graph $G = (M,A)$ is {\it undirected} if $A$ is hermitian with respect to the canonical Hilbert space structure on $M$ induced by $\psi$.   
\end{definition}

When $M$ is abelian, the above notions reduce to the usual ones for graphs and adjacency matrices.  See the discussion in  \cite{MRV18, Daw24} for example

For quantum graphs $G = (M,A)$, the notion of regularity is quite natural to define.

\begin{definition} \label{def:regularqgraph}
An undirected quantum graph $G = (M,A)$ is {\it $d$-regular} if $A1 = A^*1 =  d1 $ for some $d > 0$.    
\end{definition}

\begin{remark}
In \cite{Was23} the definition of quantum graph is extended to the setting of infinite-dimensional $M$ being given by infinite direct products of matrix algebras.  Here the technical issue is that $m^*$ is unbounded, so care must be taken in interpreting the definition of a quantum adjacency matrix.  The theory of infinite quantum graphs is important and quite natural as it naturally models the notion of a Cayley graph for an infinite discrete quantum group \cite{Was23}.  This notion of quantum Cayley graph will be introduced and used in Section \ref{sec: Schreier}.      
\end{remark}

\begin{example}\label{ex:graph-to-channel-is-qgraph}
Consider the map $\Phi_G$ constructed in~\cref{ex:graph-to-channel} from a $d$-regular graph.  Set $A = d\Phi_G$.  Then the pair $(B(\C^V), A)$ is a $d$-regular quantum graph. Indeed, using 
$m^\ast(\ketbra{u}{v})=|V|^{-1}\sum_{w\in V}\ketbra{u}{w}\otimes\ketbra{w}{v}$, we have 
\begin{align*}
    m\circ(A\otimes A)\circ m^\ast(\ketbra{u}{v})&=|V|^{-1}\sum_{w,i,j}P_i\ketbra{u}{w}P_i^\ast P_j\ketbra{w}{v}P_j^\ast\\
    &=|V|^{-1}\sum_{i,j}\Tr(P_i^\ast P_j)P_i\ketbra{u}{v}P_j^\ast\\
    &=d\Phi_G(\ketbra{u}{v}) \\
    &= A(\ketbra{u}{v}),
\end{align*}
as $\Tr(P_i^\ast P_j)=|V|\delta_{i,j}$ since the $\{P_i\}_i$ are disjoint permutations.    
\end{example}

Note that if $G = (M,A)$ is a $d$-regular quantum graph, then $\Phi = \frac{1}{d}A$ will define a quantum bistochastic map on $M$.  When $M = B(\mc{K})$ is a full matrix algebra, one can ask whether the two notions of degree we have introduced agree. Fortunately they do, and this is the next result.

\begin{proposition} \label{prop:kr=degree}
    A completely positive map $A:B(\mc{K}) \to B(\mc{K})$ is a quantum adjacency matrix if and only if the normalised Choi matrix
    \[P_A:=  \frac{1}{\dim \mc{K}}\sum_{ij} e_{ij} \otimes A(e_{ij})\] is a projection.  If $(B(\mc K), A)$ is moreover a $d$-regular quantum graph, then 
    \[d = \text{rank}(P_A) = \text{the Kraus rank of $A$}.\]
\end{proposition}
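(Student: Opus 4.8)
The plan is to establish the claim in two parts, matching the two assertions in the statement. The core computation connects the quantum-adjacency-matrix identity $m(A\otimes A)m^* = A$ to a projection condition on the normalised Choi matrix $P_A$, and then a degree/trace bookkeeping argument identifies $d$ with $\mathrm{rank}(P_A)$ and with the Kraus rank.

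\medskip

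\noindent\textbf{Step 1: the Schur-idempotent condition is a projection condition on $P_A$.}
First I would set up the correspondence between completely positive maps $A:B(\mc K)\to B(\mc K)$ and their (normalised) Choi matrices, together with the precise form of $m^*$ on $B(\mc K)$. Using the trace $\psi = (\dim\mc K)\Tr$, the formula $m^*(\ketbra{u}{v}) = (\dim\mc K)^{-1}\sum_w \ketbra{u}{w}\otimes\ketbra{w}{v}$ (as already used in Example \ref{ex:graph-to-channel-is-qgraph}) is the key gadget. The plan is to compute $m(A\otimes A)m^*$ acting on a matrix unit $\ketbra{u}{v}$ and to rewrite the result so that the Schur-idempotency equation $m(A\otimes A)m^* = A$ becomes, after collecting terms, exactly the statement $P_A^2 = P_A$ for the normalised Choi matrix. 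Concretely, I expect that expanding $m(A\otimes A)m^*$ produces a double sum over Choi-matrix entries that factors as matrix multiplication of $P_A$ with itself; the normalisation by $\dim\mc K$ in the definition of $P_A$ is precisely what absorbs the $(\dim\mc K)^{-1}$ coming from $m^*$. Since $P_A$ is already self-adjoint and positive when $A$ is completely positive (because the Choi matrix of a CP map is positive), the equation $P_A^2 = P_A$ is equivalent to $P_A$ being an (orthogonal) projection. This gives the biconditional.

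\medskip

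\noindent\textbf{Step 2: rank of $P_A$ equals the degree and the Kraus rank.}
For the second assertion, assume $(B(\mc K),A)$ is $d$-regular, so $A1 = A^*1 = d1$. The plan is to compute $\Tr(P_A)$ directly from its definition: $\Tr(P_A) = (\dim\mc K)^{-1}\sum_{ij}\Tr(e_{ij})\Tr(A(e_{ij}))$, and since $\Tr(e_{ij}) = \delta_{ij}$ this collapses to $(\dim\mc K)^{-1}\Tr(A(I)) = (\dim\mc K)^{-1}\Tr(dI) = d$. Because $P_A$ is a projection (by Step 1), its trace equals its rank, so $\mathrm{rank}(P_A) = d$. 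Finally, the Kraus rank of $A$ equals the rank of the unnormalised Choi matrix $J_A$ (as recalled in the paragraph preceding Definition \ref{def:quantum-degree}), and $P_A = (\dim\mc K)^{-1}J_A$ has the same rank as $J_A$ since they differ by a positive scalar. Chaining these equalities yields $d = \mathrm{rank}(P_A) = \text{Kraus rank of }A$.

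\medskip

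\noindent\textbf{Main obstacle.} The routine-but-delicate part is Step 1: carefully tracking the normalisation constants so that the $\dim\mc K$ factors cancel correctly and the double sum genuinely reorganises into $P_A^2$ rather than some transpose or partial-trace variant. I would be careful about index conventions in the Choi matrix (which leg carries $e_{ij}$ versus $A(e_{ij})$) and about whether the natural product that appears is $P_A^2$ or a twisted version requiring the flip map; getting the identification $m(A\otimes A)m^*(\,\cdot\,) = A(\,\cdot\,) \Longleftrightarrow P_A^2 = P_A$ exactly right is where the real content lies. Step 2 is then a short trace computation relying on the projection property already secured.
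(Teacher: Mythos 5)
Your proposal is correct, and it differs from the paper only in how the first claim is handled: the paper does not prove the equivalence at all, but cites it as well known (\cite[Lemma 1.6 and Proposition 1.7]{CW22}), whereas you supply a direct computation. The ``main obstacle'' you flag does resolve cleanly, and with your conventions no flip or transpose appears: writing $n=\dim\mc{K}$ and using $m^*(e_{uv})=n^{-1}\sum_{w}e_{uw}\otimes e_{wv}$ (which is the correct adjoint for $\psi=n\Tr$, as in Example \ref{ex:graph-to-channel-is-qgraph}), one finds $m(A\otimes A)m^*(e_{uv})=n^{-1}\sum_{w}A(e_{uw})A(e_{wv})$, while the $(u,v)$-block of $P_A^2=n^{-2}\sum_{i,j,l}e_{il}\otimes A(e_{ij})A(e_{jl})$ is $n^{-2}\sum_{w}A(e_{uw})A(e_{wv})$; comparing the first with $A(e_{uv})$ and the second with the $(u,v)$-block $n^{-1}A(e_{uv})$ of $P_A$ shows that $m(A\otimes A)m^*=A$ and $P_A^2=P_A$ are the \emph{same} system of equations, and self-adjointness of $P_A$ is automatic by Choi's theorem since $A$ is completely positive. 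Your Step 2 coincides with the paper's proof verbatim: $(\Tr\otimes\Tr)(P_A)=n^{-1}\sum_i\Tr(A(e_{ii}))=n^{-1}\Tr(A(1))=d$ using only $A1=d1$, trace equals rank for a projection, and the Kraus rank equals $\mathrm{rank}(P_A)$ because the unnormalised Choi matrix is a positive scalar multiple of $P_A$. What your route buys is a self-contained, elementary verification of the projection criterion in the matricial case; what the paper's citation buys is brevity and a pointer to the general theory where the same fact is established in wider generality.
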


\begin{proof}
The first claim is a well-known fact about quantum graphs on full matrix algebras.  See for example \cite[Lemma 1.6 and Proposition 1.7]{CW22}.  The second claim is just a computation.  Indeed, if $A1 = d1$, then
\begin{align*}
\text{Kraus rank of $A$} &= \text{rank}(P_A)\\
&= (\Tr \otimes \Tr)(P_A) \\
&= \frac{1}{\dim \mc K}\sum_i \Tr(A(e_{ii}))\\
&= d \qedhere
\end{align*}
 \end{proof}

 In particular, for regular quantum graphs $(B(\mathcal K), A)$ over full matrix algebras, the degree $d$ is an integer between $1$ and $(\dim \mathcal K)^2$, and it is easy to see that every value of $d$ in this range is attained, for example by taking $\Phi$ to be the mixed-unitary channel corresponding to an equal mixture of $d$ orthogonal unitaries. 

It is important to note that  for general  quantum graphs, $G = (M,A)$ there does not seem to exists a definition of the expansion constant $h_Q(G)$ (although a natural one is implicit in our discussion above - see the Remark following Definition \ref{def:quantumexpansion}.)  More importantly, a version of the Cheeger inequality Theorem \ref{thm:quantum-cheeger} beyond the case of matricial quantum graphs (i.e., those with $M = B(\mc K)$ - which are already covered by Theorem \ref{thm:quantum-cheeger}) does not exist.  A general version of the Cheeger inequality for (even non-tracial) quantum graphs has recently been announced in forthcoming work of Junk \cite{Ju24}.  
In any case, even without expansion constants and Cheeger inequalities, we can still talk about spectral gap for the bistochastic map $\Phi = d^{-1}A$ associated to a $d$-regular quantum $G = (M,A)$, and use this spectral gap to define $(d,\eta)$-expanders in this case.  This is what we shall do in Section \ref{sec: Schreier}, where we consider quantum analogues of Schreier graphs associated to quantum Cayley graphs of discrete quantum groups.

\subsubsection{Quantum expanders from classical expanders} \label{subsubsec:q-expander-from-c-expander}

In this final section, we recall the construction of a quantum expander based on a finite group due to Harrow~\cite{Har07}.

\begin{proposition}[\cite{Har07}]
    Let $\Gamma$ be a finite group with symmetric generating set $S$, and let $\pi:\Gamma\rightarrow\mc{U}(\mc H)$ be a nontrivial irreducible unitary representation. Then, the quantum channel
    $$\Phi(\rho)=\frac{1}{|S|}\sum_{g\in S}\pi(g)\rho\pi(g)^\ast$$
    is a connected undirected quantum bistochastic map such that $\lambda_2(\Phi)\leq\frac{1}{|S|}\lambda_2(\mc{C}(\Gamma,S))$.
\end{proposition}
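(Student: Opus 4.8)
The plan is to dispatch the three structural properties quickly and then spend the effort on the spectral comparison, which is the heart of the statement. First I would check that $\Phi$ is bistochastic: it is a mixed-unitary channel, being a convex combination of the conjugations $\rho\mapsto\pi(g)\rho\pi(g)^*$, each of which is completely positive with the single Kraus operator $\pi(g)$, while unitality and trace-preservation follow from $\pi(g)\pi(g)^*=\pi(g)^*\pi(g)=I$. For undirectedness I would compute the Hilbert--Schmidt adjoint, $\Phi^*(Y)=\frac{1}{|S|}\sum_{g\in S}\pi(g)^*Y\pi(g)=\frac{1}{|S|}\sum_{g\in S}\pi(g^{-1})Y\pi(g^{-1})^*$, and then reindex the sum using $S^{-1}=S$ to conclude $\Phi^*=\Phi$, so that $\Phi$ is self-adjoint and its eigenvalues are real.

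The key reformulation is to pass to the representation-theoretic picture. Identifying $B(\mc{H})\cong\mc{H}\otimes\bar{\mc{H}}$ as Hilbert spaces (so that the Hilbert--Schmidt inner product becomes the tensor-product inner product and $\ketbra{u}{v}\leftrightarrow u\otimes\bar v$), the adjoint action $X\mapsto\pi(g)X\pi(g)^*$ becomes the operator $(\pi\otimes\bar\pi)(g)$. Hence $|S|\Phi$ is realised as the averaging operator
\[
T:=\sum_{s\in S}(\pi\otimes\bar\pi)(s)
\]
for the unitary representation $\pi\otimes\bar\pi$ of $\Gamma$, and the spectrum of $\Phi$ is $|S|^{-1}$ times the spectrum of $T$.

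The substantive step --- and the place where the hypotheses enter --- is the spectral analysis of $T$. Decomposing $\pi\otimes\bar\pi$ into irreducibles and using that $\Gamma$ is finite, so that every irreducible occurs in the regular representation, each block $\sum_{s\in S}\sigma(s)$ is the restriction to an invariant subspace of the convolution operator $A(\mc{C}(\Gamma,S))=\sum_{s\in S}\pi_{\mathrm{reg}}(s)$; this yields the containment $\mathrm{spec}(T)\subseteq\mathrm{spec}(A(\mc{C}(\Gamma,S)))$. For connectedness I would invoke the standard Cauchy--Schwarz argument that, for a unitary representation $\sigma$, the equality $\sum_{s\in S}\sigma(s)v=|S|v$ forces $\sigma(s)v=v$ for every $s\in S$, hence (as $S$ generates $\Gamma$) that $v$ is $\Gamma$-invariant; since $\pi$ is irreducible, Schur's lemma identifies the invariant vectors of $\pi\otimes\bar\pi$ with the scalars $\C I$, so the top eigenvalue $|S|$ of $T$ is simple and the $1$-eigenspace of $\Phi$ is one-dimensional.

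Finally, to obtain the quantitative bound I would combine these facts: the top eigenvalues of $T$ and of $A(\mc{C}(\Gamma,S))$ both equal $|S|$ (the latter simple since $\mc{C}(\Gamma,S)$ is connected), and since $|S|$ is a simple eigenvalue of $T$, its second-largest eigenvalue (well-defined as $T$ is self-adjoint) lies in $\mathrm{spec}(A(\mc{C}(\Gamma,S)))$ and is strictly below $|S|$, hence is at most $\lambda_2(\mc{C}(\Gamma,S))$; dividing by $|S|$ gives $\lambda_2(\Phi)\le|S|^{-1}\lambda_2(\mc{C}(\Gamma,S))$. The main obstacle is precisely this last piece of spectral bookkeeping at the top eigenvalue: one must know that the Perron eigenvalue $|S|$ occurs with multiplicity one in $T$ (which is exactly connectedness) to guarantee that the second eigenvalue of $T$ is genuinely separated from the top, and therefore controlled by $\lambda_2$ of the Cayley graph rather than by $|S|$ itself.
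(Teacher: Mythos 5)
Your proposal is correct and follows essentially the same route as the paper's own argument: identifying $\Phi$ with $\frac{1}{|S|}\sum_{g\in S}\pi(g)\otimes\overline{\pi(g)}$ on $\mc H\otimes\bar{\mc H}$, decomposing $\pi\otimes\bar\pi$ into irreducibles, comparing each block with the regular representation $\sum_{g\in S}\lambda(g) = A(\mc{C}(\Gamma,S))$, and using Schur's lemma to see that the trivial representation occurs exactly once. You merely fill in details the paper's sketch leaves implicit (the equality case forcing $\sigma(s)v=v$, and the spectral bookkeeping at the simple top eigenvalue), which is welcome but not a different approach.
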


We outline the idea of the proof below, as it contains many of the key conceptual ideas that will be used throughout the rest of the paper.  Note that the adjacency matrix of the Cayley graph $\mc{C}(\Gamma,S)$, when viewed as an operator on the Hilbert space $\ell^2(\Gamma)$, is given by $\sum_{g\in S}\lambda(g)$, where $\lambda$ is the left-regular representation. The left-regular representation decomposes as a direct sum of all irreducible representations with multiplicity given by the dimension. Hence the trivial representation $g\mapsto 1$ corresponds to the one-dimensional $1$-eigenspace of the normalised adjacency matrix, and for any non-trivial unitary irreducible representation $\rho$, the largest eigenvalue of $\sum_{g\in S}\rho(g)$ is upper-bounded by $\lambda_2(\mc{C}(\Gamma,S))$. Then, the matrix representation of $\Phi$ (viewed as a Hilbert space operator on $B(\mc H) \cong \mc H \otimes \bar{\mc H}$),  is given by $\frac{1}{|S|}\sum_{g\in S}\pi(g)\otimes\overline{\pi(g)}$.  Representation  theory then tells is that this operator can be decomposed as a direct sum of blocks of the form $\sum_{g\in S}\rho(g)$ for irreducible $\rho$, and there is only a single one-dimensional block in this decomposition corresponding to the trivial representation. Hence, we get the bound on the second-largest eigenvalue.

\subsection{Quantum groups} \label{sec:qg}
This section recalls some well known facts about quantum groups based on notes by Maes and Van Daele \cite{MvD98}, adjusting the notation to be more in line with recent papers \cite{Fim10, Was23}.  See also the book \cite{NT} for many of the results stated below without proof. We will focus on compact quantum groups, discrete quantum groups, the duality between them and their representations, and end the section with the definition of property (T) for discrete quantum groups due to Fima~\cite{Fim10}.

\subsubsection{Compact quantum groups}
\begin{definition}
    A \textit{compact quantum group} $\cqG$ is a pair $(A, \Delta)$ of a unital C*-algebra $A$ and a $\ast$-homomorphism $\Delta: A\rightarrow A\otimes_{\min} A$, called the \textit{comultiplication}, satisfying
    \begin{enumerate}[i.]
        \item $(\id\otimes \Delta) \circ \Delta = (\Delta \otimes \id) \circ \Delta$,
        \item The sets $(A\otimes 1)\Delta(A)$ and $(1\otimes A)\Delta(A)$ are linearly dense in $A\otimes_{\min}A$.
    \end{enumerate}
    \end{definition}

In the following, the algebra $A$ will be denoted by $C(\cqG)$ as it generalises the continuous functions on a compact group $G$.  In this classical case, $\Delta:C(G) \to C(G) \otimes_{\min} C(G)  = C(G \times G)$ is given by $\Delta f(s,t) = f(st)$ for $f \in C(G)$ and $s,t \in G$.  Then properties (i) and (ii) for $\Delta$ are just reformulations of the associativity and left/right cancellation properties (respectively), which completely characterise continuous group laws on compact Hausdorff spaces.  

Any compact quantum group admits an analogue of the unique Haar probability measure. See for example \cite[Theorem 1.2.1]{NT}.

\begin{theorem}
For any compact quantum group $\cqG$, there exists a unique state $h:C(\cqG)\to \C$, called the {\it Haar state}, satisfying 
    \begin{equation*}
        (\id\otimes h)(\Delta(a))=h(a)1=(h\otimes \id)(\Delta(a))
    \end{equation*}
    for all $a\in C(\cqG)$.
\end{theorem}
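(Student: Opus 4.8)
The plan is to realise the Haar state as a two-sided invariant idempotent for the convolution product on states, produced by a Ces\`aro-averaging argument combined with weak-$*$ compactness, following the Woronowicz/Maes--Van Daele approach. First I would set up the convolution: for bounded functionals $\mu,\nu$ on $C(\cqG)$ put $\mu\star\nu=(\mu\otimes\nu)\circ\Delta$. Since $\Delta$ is a unital $*$-homomorphism, $\star$ sends pairs of states to states, and by coassociativity (axiom (i)) it is associative, so the state space $S(C(\cqG))$ becomes a convolution semigroup. Two structural facts are crucial: $S(C(\cqG))$ is weak-$*$ compact (Banach--Alaoglu), and $\star$ is separately weak-$*$ continuous, since for fixed $\nu$ and fixed $a$ the slice $(\id\otimes\nu)\Delta(a)\in C(\cqG)$ is a fixed element and $\mu\mapsto\mu\big((\id\otimes\nu)\Delta(a)\big)$ is weak-$*$ continuous (and symmetrically). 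I would also record the dictionary: left invariance $(\id\otimes h)\Delta(a)=h(a)1$ is equivalent to $\mu\star h=\mu(1)\,h$ for every functional $\mu$, and right invariance to $h\star\mu=\mu(1)\,h$; thus the Haar state is exactly a two-sided ``zero'' of this semigroup.

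Uniqueness is then immediate, so I would dispatch it first. If $h$ and $h'$ are both Haar states, then left invariance of $h'$ gives $h\star h'=h'$ while right invariance of $h$ gives $h\star h'=h$, whence $h=h'$.

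For existence, fix a faithful state $\omega$ (available, for instance, when $C(\cqG)$ is separable; in general one reduces to this case or argues with a net) and form the Ces\`aro means $\omega_n=\frac1n\sum_{k=1}^n\omega^{\star k}$. A telescoping computation gives $\|\omega_n\star\omega-\omega_n\|\le\frac2n$ and $\|\omega\star\omega_n-\omega_n\|\le\frac2n$, so any weak-$*$ cluster point $h$ of $(\omega_n)$ satisfies $\omega\star h=h\star\omega=h$ by separate weak-$*$ continuity. Iterating $h\star\omega=h$ yields $h\star\omega^{\star k}=h$, hence $h\star\omega_n=h$, and passing to the limit gives idempotency $h\star h=h$. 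At this stage I note that $E=(\id\otimes h)\Delta$ is a unital completely positive map which, using coassociativity together with $h\star h=h$, satisfies $E^2=E$; and left invariance of $h$ is precisely the statement that $\mathrm{Im}(E)=\C 1$. So far $h$ is an idempotent state, invariant under the single faithful state $\omega$.

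The hard part, and what I expect to be the main obstacle, is to promote invariance under $\omega$ to invariance under all states, i.e.\ to force $\mathrm{Im}(E)=\C1$. Here the relation $\omega\star h=h$ gives $\omega\circ E=h$, so the defect element $z_a=E(a)-h(a)1$ satisfies $\omega(z_a)=0$; the goal is to upgrade this to $z_a=0$. The natural route is to pass to the GNS representation of the faithful $\omega$, in which $C(\cqG)$ embeds, and to show $\langle c,z_a\rangle_\omega=\omega(c^*z_a)=0$ for $c$ ranging over a dense subspace, whence $z_a=0$ by faithfulness. It is exactly in supplying such a dense family of test elements, and in making the inner products collapse via the invariance and idempotency relations, that the density/cancellation axiom (ii) is indispensable: it is the feature separating compact quantum groups from general compact quantum semigroups, where Ces\`aro averaging still yields an idempotent invariant state but no two-sided zero need exist. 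Pinning down precisely how (ii) annihilates the nonscalar part of $\mathrm{Im}(E)$ is the delicate step; once left invariance is established, right invariance follows by the symmetric argument, and the uniqueness already proved identifies the two one-sided invariant states, completing the proof.
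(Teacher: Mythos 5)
The paper itself offers no proof of this theorem (it is quoted from \cite[Theorem 1.2.1]{NT}), so your proposal must stand on its own. The parts you execute are correct and standard: the convolution semigroup structure on states, separate weak-$*$ continuity, the two-line uniqueness argument $h = h\star h' = h'$, the Ces\`aro/Banach--Alaoglu construction of an idempotent state $h$ with $\omega\star h = h\star\omega = h$, and the observation that $E = (\id\otimes h)\Delta$ is an idempotent UCP map with $\omega\circ E = h$. This is faithfully the first half of the Woronowicz/Van Daele proof.

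The problem is that the step you explicitly leave open --- upgrading invariance under the single state $\omega$ to $\mathrm{Im}(E)=\C 1$, i.e.\ invariance under \emph{all} functionals --- is the entire mathematical content of the theorem. Everything you prove holds verbatim for any compact quantum \emph{semigroup}, where, as you yourself note, idempotent non-invariant states exist; so no soft argument from what you have established can close the gap, and ``the delicate step'' you defer is not a routine verification but the theorem itself. The standard closure (Van Daele's argument, reproduced in \cite{NT} and \cite{MvD98}) is a \emph{domination lemma}: if $h\star\omega = h$, then $h\star\rho = \rho(1)h$ for every positive functional $\rho\leq\omega$, proved by a Cauchy--Schwarz computation in which axiom (ii) is used to approximate elements $1\otimes a$ by sums from $\Delta(A)(A\otimes 1)$, which is what makes the invariance relation applicable and collapses the defect; one then gets invariance under an arbitrary state $\mu$ by running the construction for $\tfrac12(\omega+\mu)$ and intersecting the resulting compact convex sets $K_\omega=\{h: \omega\star h = h\star\omega = h\}$ over all states. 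Note that this route also eliminates your appeal to a faithful state, which is a second genuine defect of the proposal: a nonseparable $C(\cqG)$ need not admit any faithful state, and your parenthetical ``one reduces to this case or argues with a net'' is not a reduction anyone has supplied --- the domination trick makes faithfulness unnecessary. As written, your argument produces an idempotent $\omega$-bi-invariant state, not the Haar state.
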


Two notions of representation will play a role in this work. The first is the usual notion of a (continuous) unitary representation derived from the theory of locally compact groups.  The second one corresponds to ordinary $*$-representations of the C*-algebra $C(\cqG)$. These two notions of representation are dual to each other in the sense of Pontryagin duality, as formulated in Proposition \ref{prop:qg-dual-representations}.  Below we will make use of the leg numbering notation, and denote by $B_0(\mathcal H)$ the C*-algebra of compact operators on a Hilbert space $\mathcal H$.   
\begin{definition}
    A \textit{unitary representation} of $\cqG$ on a Hilbert space $\mathcal{H}$ is a unitary element $u\in M(B_0(\mathcal{H})\otimes C(\cqG))$ such that 
    \begin{equation*}
        (\id\otimes \Delta)(u)=u_{12}u_{13},
    \end{equation*}
    where $M(A)$ denotes the C*-algebra of multipliers of a given C*-algebra $A$. 
    A closed subspace $\mathcal{K}\subset \mathcal{H}$ is called invariant if $(p\otimes 1)u(p\otimes 1)=u(p\otimes 1)$, where $p$ is the orthogonal projection onto $\mathcal{K}$. A representation $u$ is called \textit{irreducible} if the only invariant closed subspaces are $\{0\}$ and $\mathcal{H}$.
\end{definition}

The next definition is not standard, but it is a useful concept nontheless. Note that transpose maps with respect to different bases are related by unitary conjugation, so the definition is well-defined.
\begin{definition}
    Let $T: B(\mathcal{H})\rightarrow B(\mathcal{H})$ be the transpose map with respect to an orthonormal basis. A unitary representation $u$ of a compact quantum group $\cqG$ on a finite-dimensional Hilbert space $\mathcal{H}$ is called a \textit{bi-unitary representation} if $(T\otimes \id)(u)$ is also unitary.
\end{definition}

As with groups, there is a concept of intertwiners for compact quantum groups and a version of Schur's lemma (Lemma \ref{Schur}).
\begin{definition}
    Let $v,w$ be unitary representations of a compact quantum group $\cqG$ on $\mathcal{H}_1$ and $\mathcal{H}_2$, respectively. An \textit{intertwiner} between $v$ and $w$ is an element $S\in B(\mathcal{H}_1,\mathcal{H}_2)$ such that $(S\otimes 1)v=w(S\otimes 1)$. Two unitary representations are called \textit{unitarily equivalent} if there exists a intertwiner between them that is unitary.  The space of intertwiners from $u$ to $v$ is denoted $\text{Mor}(u,v)$.
\end{definition}
\begin{notation}
    $\mathrm{Irred}(\cqG)$ denotes the equivalence classes of irreducible (unitary) representations of $\cqG$. For any $x\in \mathrm{Irred}(\cqG)$ we fix a representative $u^x$ on Hilbert space $\mathcal{H}_x$. 
\end{notation}
\begin{lemma} \label{Schur}
    A unitary representation $u$ is irreducible if and only if all intertwiners between $u$ and itself are scalar multiples of the identity.
\end{lemma}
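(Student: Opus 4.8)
The plan is to recast the statement in terms of the self-intertwiner space $\text{Mor}(u,u) = \{S \in B(\mathcal H) : (S\otimes 1)u = u(S\otimes 1)\}$ and to prove the equivalent assertion that $\text{Mor}(u,u) = \mathbb C\,\mathrm{id}$ if and only if $u$ is irreducible. The bridge between the two formulations is the dictionary between closed invariant subspaces of $u$ and orthogonal projections lying in $\text{Mor}(u,u)$: a subspace $\mathcal K$ with projection $p$ should be invariant precisely when $p\otimes 1$ commutes with $u$. Granting this dictionary, the lemma becomes a statement about the projection lattice of $\text{Mor}(u,u)$.

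First I would record the structural properties of $\text{Mor}(u,u)$. It is visibly a unital subalgebra of $B(\mathcal H)$, and slicing the defining relation by functionals $\mathrm{id}\otimes\omega$ identifies it with the commutant of $\{(\mathrm{id}\otimes\omega)(u) : \omega \in C(\cqG)^*\}$, so it is weakly closed. A short computation shows it is also closed under adjoints: taking the adjoint of $(S\otimes1)u = u(S\otimes1)$ gives $u^*(S^*\otimes1) = (S^*\otimes1)u^*$, and multiplying on the left and right by $u$ (using that $u$ is unitary) yields $(S^*\otimes1)u = u(S^*\otimes1)$. Hence $\text{Mor}(u,u)$ is a von Neumann algebra, and in particular the spectral projections of any self-adjoint element of $\text{Mor}(u,u)$ again lie in $\text{Mor}(u,u)$.

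These facts already give one direction. If $u$ is irreducible and $S \in \text{Mor}(u,u)$, then, splitting into real and imaginary parts, it suffices to treat self-adjoint $S$; were $S$ non-scalar it would possess a spectral projection $p$ with $0 \neq p \neq \mathrm{id}$, and since $p \in \text{Mor}(u,u)$ the subspace $\mathrm{ran}(p)$ would be a non-trivial invariant subspace, contradicting irreducibility. This uses only the easy half of the dictionary, namely that a commuting projection yields an invariant subspace, which is immediate from the definitions.

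The converse — all self-intertwiners scalar implies irreducibility — is where the real work lies, and it forces the hard half of the dictionary: that an invariant subspace $\mathcal K$, with projection $p$, actually has $p\otimes 1$ commuting with $u$ (equivalently, that $\mathcal K^\perp$ is again invariant). I expect this to be the main obstacle. It genuinely requires the corepresentation law $(\mathrm{id}\otimes\Delta)(u) = u_{12}u_{13}$ and cannot follow from unitarity of $u$ alone, since a single unitary operator (for instance a unilateral shift) can have invariant subspaces that are not reducing. The route I would take is to write $u$ in block form relative to $\mathcal H = \mathcal K \oplus \mathcal K^\perp$; invariance makes the lower-left block vanish, and $u^*u = 1$ then forces the upper-left block $u_{11} = (p\otimes1)u(p\otimes1)$ to be an isometric corepresentation of $\cqG$ on $\mathcal K$. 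The key point is that an isometric corepresentation of a compact quantum group must in fact be unitary — immediate when $\mathcal K$ is finite-dimensional, and in general extracted from the corepresentation identity together with the faithfulness of the Haar state $h$ — which forces the remaining off-diagonal block $(p\otimes1)u(q\otimes1)$ to vanish, so that $p\otimes 1$ commutes with $u$. With a non-scalar projection now exhibited in $\text{Mor}(u,u)$, the hypothesis that all self-intertwiners are scalar is contradicted, completing the argument.
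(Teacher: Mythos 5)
The paper offers no proof of this lemma at all: it is stated as standard background on compact quantum groups, with \cite{MvD98} and \cite{NT} as the implicit sources, so your proposal can only be measured against the standard literature argument --- which is essentially what you have reconstructed, correctly in outline. Both halves are sound: the easy direction via the observation that $\mathrm{Mor}(u,u)$ is a von Neumann algebra (your adjoint computation, multiplying $u^*(S^*\otimes 1)=(S^*\otimes 1)u^*$ on both sides by $u$, is exactly the right use of unitarity) together with spectral projections; and the hard direction via the block decomposition relative to $\mathcal H = \mathcal K \oplus \mathcal K^{\perp}$, compressing $(\id\otimes\Delta)(u)=u_{12}u_{13}$ (using the vanishing of the lower-left block) to see that the corner $(p\otimes 1)u(p\otimes 1)$ is an isometric corepresentation on $\mathcal K$, and then invoking the fact that isometric corepresentations of a compact quantum group are unitary to kill the remaining off-diagonal block. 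You also correctly identify the crux: reducibility of invariant subspaces genuinely needs the corepresentation identity and an averaging/Haar-state input, not unitarity alone.

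Three corrections, in decreasing order of importance. First, faithfulness of the Haar state holds on $C(\cqG_{\min})$, equivalently on $\mathcal O(\cqG)$, but \emph{not} on an arbitrary completion $C(\cqG)$, and the paper's definition allows any completion; as written, your argument shows the off-diagonal block $b=(p\otimes 1)u(q\otimes 1)$ vanishes only after pushing $u$ forward to the reduced form, which yields an intertwiner of the reduced image rather than of $u$ itself. This is harmless for the finite-dimensional representations to which the paper actually applies the lemma (their coefficients lie in $\mathcal O(\cqG)$, where $h$ is faithful), but for infinite-dimensional $u$ over a non-reduced completion the step needs an extra remark. Second, ``immediate when $\mathcal K$ is finite-dimensional'' overstates the case: an isometry in $\mbb{M}_n(C(\cqG))$ need not be unitary on abstract C*-algebraic grounds when $C(\cqG)$ is infinite-dimensional; the quick finite-dimensional argument, $(\Tr\otimes h)(1-vv^*)=(\Tr\otimes h)(1-v^*v)=0$, requires $h$ to be a faithful \emph{trace} --- true in the unimodular/Kac setting relevant to this paper (property (T) forces unimodularity), but not for general $\cqG$, where one needs the same Haar-state argument as in the general case. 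Third, a cosmetic slip: the unilateral shift is an isometry, not a unitary; the example you want of a unitary with a non-reducing invariant subspace is the bilateral shift on $\ell^2(\mathbb Z)$ with the subspace spanned by $\{e_n : n \geq 0\}$.
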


Denote by $\mathcal O(\cqG)$ the vector space spanned by the matrix coefficients of the irreducible representations of $\cqG$,  \textit{i.e.} 
\[\mathcal O (\cqG) = \text{span}\{u^x_{\xi,\eta} = (\omega_{\xi,\eta} \otimes \id)u^x: x \in \mathrm{Irred}(\cqG), \  \xi, \eta \in \mathcal H_x\} \subseteq C(\cqG),\]
where $\omega_{\xi,\eta}(a)=\braket{\xi}{a}{\eta}$ for $a \in B(\mc H_x)$.  $\mathcal O(\cqG)$ is a dense unital $*$-algebra of $C(\cqG)$, and the  comultiplication $\Delta$ restricts to a unital $*$-homomorphism $\Delta:\mathcal O(\cqG) \to \mathcal O(\cqG)  \otimes \mathcal O(\cqG) $, turning $\mathcal O(\cqG)$ into a  {\it Hopf $*$-algebra}.  The fact that $\mathcal O(\cqG)$ is a $*$-algebra follows from the existence of the tensor product, contragredient, and complete reducibility of finite dimensional representations.     It turns out that $\mathcal O (\cqG)$ is the unique dense Hopf $\ast$-subalgebra of $C(\cqG)$.  

Since $\mathcal O(\cqG)$ is spanned by coefficients of unitary operators, it admits a universal C*-completion, denoted by $C(\cqG_{\mathrm{max}})$.  The comultiplication $\Delta$ extends continuously to a comultiplication $\Delta_{\max}$ on $C(\cqG_{\max})$, making the pair $(C(\cqG_{\max}), \Delta_{\max})$ a compact quantum group, called the \textit{maximal version of $\cqG$}.  The Haar measure $h$ restricts to a faithful state on  $\mathcal O(G)$, and the resulting C*-completion of $\mathcal O(G)$ obtained by performing the GNS construction with respect to $h$ is denoted by $C(\cqG_{\min})$.  The comultiplication $\Delta$ on $\mathcal O(\cqG)$ extends continuously to a comulitplication $\Delta_{\min}$ on $C(\cqG_{\min})$, and the resulting compact quantum group is called the {\it minimal (or reduced) version of $\cqG$}.  

For any initial C*-realization $(C(\cqG), \Delta)$ of a compact quantum group $\cqG$, one has quotient maps 
\[
C(\cqG_{\max}) \to C(\cqG) \to C(\cqG_{\min})
\]
extending the  identity map on $\mathcal O(\cqG)$ and intertwining the comultiplications.  Since $\mathcal O(\cqG)$ is uniquely determined by {\it any} of these C*-completions and conversely any of these C*-completions can be recovered from $\mathcal O(\cqG)$, they are all just different C*-algebraic manifestations of a single compact quantum group structure $\cqG$.  This is analogous to the fact that a discrete group $\Gamma$ can be encoded C*-algebraically in more than one way (\textit{e.g.} via its full C*-algebra $C^*(\Gamma)$, or its reduced C*-algebra $C^*_r(\Gamma)$).

\subsubsection{Discrete quantum groups, duality, and property (T)} \label{subsubsec:dqg-duality-prop-T}

Discrete quantum groups can be defined axiomatically using the language of multiplier Hopf $*$-algebras \cite{vDa96}, or equivalently as structures dual to compact quantum groups.  We follow the latter approach, as it best suits our needs.

Let $\cqG$ be a compact quantum group.  Associated to $\cqG$, we define the C*-algebra 
\begin{equation*}
        C_0(\hat \cqG )=\bigoplus_{x\in \mathrm{Irred}(\cqG)}^{c_0}B(\mathcal{H}_x)
    \end{equation*}
and the von Neumann algebra
    \begin{equation*}
        \ell^{\infty}(\hat{\cqG})=\prod_{x\in \mathrm{Irred}(\cqG)}^{\ell^\infty} B(\mathcal{H}_x)
\end{equation*}
For $x \in 
\text{Irred}(\cqG)$, the minimal central projection associated to $B(\mathcal H_x) \subset C_0(\hat\cqG) \subset \ell^\infty(\hat \cqG)$ is denoted by $p_x$.
More generally, for $E \subseteq \text{Irred}(\cqG)$, we write $p_E = \sum_{x \in E} p_{x}$, with the sum converging $\sigma$-weakly in $\ell^\infty(\hat \cqG)$.  We also denote by $\Tr_x$ the canonical un-normalised trace on $B(\mathcal H_x)$, $\dim x = \Tr_x(1) = \dim \mathcal H_x$, and $\dim E = \sum_{x \in E} \dim x$ for any subset $E \subseteq \text{Irred}( \cqG)$.  At times, we will also work with $C_{00}(\hat \cqG) \subset C_{0}(\hat \cqG)$, the dense $*$-subalgebra of finitely supported elements.

There exists a normal injective co-associative $*$-homomorphism $\hat \Delta:\ell^\infty(\hat \cqG) \to \ell^\infty(\hat \cqG) \overline{\otimes} \ell^\infty(\hat \cqG)$ given by 
$\hat{\Delta}(ap_x)S=Sap_x$ for all $a\in \ell^\infty(\hat \cqG)$, $S \in \text{Mor}(u^x, u^y\otimes u^z)$, $x,y,z\in \mathrm{Irred}(\cqG)$. Equivalently, $\hat \Delta$ can be defined in terms of the unitary  
    \begin{equation*}
        \mathcal{V}=\bigoplus_{x\in \mathrm{Irred}(\cqG)}u^x\in M(C_0(\hat{\cqG})\otimes C(\cqG_{\max})).
    \end{equation*}
Then $\hat \Delta$ is uniquely determined by the identity 
\[(\hat \Delta \otimes \id) \mathcal{V}=\mathcal{V}_{13}\mathcal{V}_{23}.\]
Although we will have little need for them here, we mention that $\ell^\infty(\hat \cqG)$ comes equipped with left and right invariant weights, $\hat h_L$ and $\hat h_R$, satisfying the formal identities
\[(\hat h_R \otimes \id )\hat \Delta (a) = \hat h_R (a)1 \quad \&  \quad (\id \otimes \hat h_L )\hat \Delta (a) = \hat h_L(a)1 \qquad (a \in \ell^\infty(\hat \cqG)). \]

The quadruple $\dqG = \hat \cqG = (\ell^\infty(\hat \cqG), \hat \Delta, \hat h_L, \hat h_R)$ is the {\it discrete quantum group dual to $\cqG$}.  We also denote by $\ell^1(\hat \cqG) = (\ell^\infty(\hat \cqG))_*$, the predual of $\ell^\infty(\hat \cqG)$.  $\ell^1(\hat \cqG)$ is a completely contractive Banach algebra with convolution product given by 
\[
\psi \star \varphi = (\psi \otimes \varphi)\circ \hat \Delta \qquad (\psi, \varphi \in \ell^1(\hat \cqG)).
\]

A \textit{unitary representation} of $\hat \cqG$ on a Hilbert space $\mathcal{H}$ is a unitary $U\in M(C_0(\hat{\cqG})\otimes B_0(\mathcal H))$ such that 
    \begin{equation*}
        (\hat \Delta\otimes \id)(U)=U_{13}U_{23}.
        \end{equation*}

By faithfully representing $C(\cqG_{\max})$ on a Hilbert space $\mathcal H$, one can regard the unitary $\mathcal V$ defined above is a 
special example of a {\it unitary representation} of $\hat \cqG$.  $\mathcal V$ is a multiplicative unitary in the sense of \cite{BS93}.

For a discrete quantum group $\hat \cqG$, one can also define  co-unit and (generally unbounded) antipode maps. The {\it co-unit} is the normal state $\hat \epsilon:\ell^\infty(\hat \cqG) \to \C$ given by $\hat\epsilon(x) = xp_{0}$, where $p_0$ is the rank one central projection corresponding to the trivial representation of $\cqG$.  For our purposes, it suffices to  densely define the antipode $\hat S:\ell^\infty(\hat \cqG) \to \ell^\infty(\hat \cqG)$ by
$(\hat S \otimes \id)U = U^*$ for any unitary representation of $\hat \cqG$.

As one would expect, unitary representations of a discrete quantum group $\hat \cqG$ are in one-to-one correspondence with $*$-representations of the C*-algebra of the dual compact quantum group $\cqG$, and vice versa.  This duality is encoded precisely in terms of the multiplicative unitary $\mathcal V$.

\begin{proposition} \label{prop:qg-dual-representations}
    For any unitary representation $U$ of $\hat{\cqG}$ on $\mathcal{H}$ there exists a unique $*$-homomorphism $\pi: C(\cqG_{\max}) \to \mathcal{H}$ such that $(\id\otimes \pi)(\mathcal{V})=U$. Conversely, for any unitary representation $u$ of $\cqG$ on $\mathcal{H}$ there exists a unique $*$-homomorphism $\rho: \ell^{\infty}(\hat{\cqG})\rightarrow B(\mathcal{H})$ such that $(\rho\otimes \id)(\mathcal{V})=u$.
\end{proposition}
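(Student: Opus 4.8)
The plan is to realise the asserted bijection concretely through the block structure $C_0(\hat{\cqG}) = \bigoplus_{x} B(\mathcal{H}_x)$, exploiting that $\mathcal{V} = \bigoplus_x u^x$ simultaneously packages every irreducible $u^x$ of $\cqG$ and is itself a unitary representation of $\hat{\cqG}$. For the first statement, given a unitary representation $U$ of $\hat{\cqG}$ on $\mathcal{H}$, I would first cut $U$ into its blocks $U_x := (p_x \otimes 1)U$. Since each $p_x$ is a minimal central projection of $C_0(\hat{\cqG})$ and $\mathcal{H}_x$ is finite-dimensional, $p_x \otimes 1$ is central in $M(C_0(\hat{\cqG}) \otimes B_0(\mathcal{H}))$, hence commutes with $U$, so each $U_x$ is a unitary in the corner $B(\mathcal{H}_x \otimes \mathcal{H}) = B(\mathcal{H}_x) \otimes B(\mathcal{H})$. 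I would then define $\pi$ on matrix coefficients by $\pi\big((\omega \otimes \id)\mathcal{V}\big) = (\omega \otimes \id)U$, that is $\pi(u^x_{\xi,\eta}) = (\omega_{\xi,\eta} \otimes \id)U_x$, and extend linearly to the dense Hopf $*$-algebra $\mathcal{O}(\cqG)$.

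The three algebraic checks — that $\pi$ is well-defined and multiplicative, $*$-preserving, and unital — are where the representation identity $(\hat\Delta \otimes \id)U = U_{13}U_{23}$ enters. Unwinding the defining formula $\hat\Delta(ap_x)S = Sap_x$ for $S \in \text{Mor}(u^x, u^y \otimes u^z)$ turns this identity into the compatibility of the blocks $U_x$ with the fusion intertwiners, which is exactly the statement that $\pi$ respects products of matrix coefficients; unitality is the contribution of the trivial-representation block $p_0$; and $*$-preservation comes from the unitarity of $U$ together with its compatibility with contragredient representations, equivalently from the antipode determined by $(\hat S \otimes \id)\mathcal{V} = \mathcal{V}^*$. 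Once $\pi$ is a $*$-homomorphism on $\mathcal{O}(\cqG)$, it sends matrix coefficients of a unitary to bounded operators, so by the universal property of the maximal completion it extends to $\pi: C(\cqG_{\max}) \to B(\mathcal{H})$. The relation $(\id \otimes \pi)\mathcal{V} = U$ is then immediate by slicing in the first leg, and uniqueness holds because that relation already pins down $\pi$ on all matrix coefficients, which span a dense subalgebra.

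For the converse, given a unitary representation $u$ of $\cqG$ on $\mathcal{H}$, I would decompose $u$ into irreducibles via intertwiners, writing $\mathcal{H} \cong \bigoplus_x \mathcal{H}_x \otimes K_x$ and $u \cong \bigoplus_x u^x \otimes 1_{K_x}$ with multiplicity spaces $K_x$, and define the normal $*$-homomorphism $\rho: \ell^\infty(\hat{\cqG}) = \prod_x B(\mathcal{H}_x) \to B(\mathcal{H})$ blockwise by $\rho\big((a_x)_x\big) = \bigoplus_x a_x \otimes 1_{K_x}$. Then $(\rho \otimes \id)\mathcal{V} = u$ is immediate from the construction, and uniqueness follows because slicing $\mathcal{V}$ in the second leg recovers all matrix units of each $B(\mathcal{H}_x)$, using irreducibility of $u^x$ through Schur's Lemma \ref{Schur}.

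The main obstacle I anticipate is the multiplicativity verification in the first direction: translating $(\hat\Delta \otimes \id)U = U_{13}U_{23}$ — whose left-hand side is only defined implicitly through the action on the morphism spaces $\text{Mor}(u^x, u^y \otimes u^z)$ — into the concrete identity $\pi(u^x_{\xi,\eta}\, u^{x'}_{\xi',\eta'}) = \pi(u^x_{\xi,\eta})\,\pi(u^{x'}_{\xi',\eta'})$ requires careful Hopf-algebraic bookkeeping with the decomposition of $u^x \otimes u^{x'}$ into irreducibles and the isometries implementing it. This is standard in the theory of multiplicative unitaries, since the pentagon equation for $\mathcal{V}$ encodes precisely this data, but it is the step carrying the genuine content; the remaining checks are routine.
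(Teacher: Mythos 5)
Your proposal cannot be compared against an in-paper argument, because the paper does not prove this proposition: it is recalled as standard background from Maes--Van Daele \cite{MvD98} (see also \cite{NT}), i.e.\ it is exactly the Pontryagin-type duality encoded by the multiplicative unitary $\mathcal V$. What you have written is the standard proof of that standard fact, and it is correct in outline, with the following points to tighten. First, well-definedness of $\pi$ on $\mathcal O(\cqG)$ is not a consequence of the representation identity $(\hat\Delta\otimes\id)(U)=U_{13}U_{23}$, as your phrasing suggests: it follows from the Peter--Weyl fact that the coefficients $\{u^x_{ij}\}_{x\in\mathrm{Irred}(\cqG)}$ form a linear \emph{basis} of $\mathcal O(\cqG)$ (orthogonality relations for the Haar state); the representation identity enters only for multiplicativity, via the relation $(S\otimes 1)U^x = U^y_{13}U^z_{23}(S\otimes 1)$ for $S\in\mathrm{Mor}(u^x,u^y\otimes u^z)$, exactly as you describe. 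For $*$-preservation, be careful with the appeal to the antipode: in the non-Kac case $\hat S$ is unbounded and the contragredient of $u^x$ is unitarized by a positive intertwiner, so the clean route is to use multiplicativity and unitality together with the conjugation intertwiners $t\in\mathrm{Mor}(1,u^x\otimes\bar u^x)$, which force the block $U^{\bar x}$ to be conjugate to $U^x$ and yield $\pi\bigl((u^x_{ij})^*\bigr)=\pi(u^x_{ij})^*$. The extension step is unproblematic for the reason you give: every $*$-representation of $\mathcal O(\cqG)$ is bounded by the universal norm since the generators are coefficients of unitaries.

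In the converse direction, your blockwise construction of $\rho$ is right, but two things deserve explicit mention. You invoke complete reducibility of an arbitrary (possibly infinite-dimensional) unitary representation of $\cqG$ into $\bigoplus_x \mathcal H_x\otimes K_x$; this is a theorem for compact quantum groups, not a formality, and should be cited (e.g.\ \cite{NT}). And uniqueness of $\rho$ only holds among \emph{normal} $*$-homomorphisms: your slicing argument, which indeed recovers all matrix units of each $B(\mathcal H_x)$ from functionals dual to the linearly independent coefficients $u^x_{ij}$, pins $\rho$ down on the $\sigma$-weakly dense subalgebra $C_{00}(\hat{\cqG})$, and this determines $\rho$ on $\ell^\infty(\hat{\cqG})=\prod_x B(\mathcal H_x)$ only via normality (non-normal $*$-homomorphisms on the product can kill $C_{00}(\hat{\cqG})$ entirely, and for such maps $(\rho\otimes\id)(\mathcal V)$ is not even well defined on multipliers). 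This normality convention is implicit in the paper, which later speaks of ``a (normal) $*$-homomorphism $\rho$'' in the proof of Proposition \ref{prop:dqg-quantum-channel}. With these citations and the normality caveat made explicit, your plan compiles into a complete and correct proof.
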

We conclude this recap of quantum groups by giving the definitions of property (T) and Kazhdan pairs for discrete quantum groups, which were introduced by Fima \cite{Fim10}.
    For a unitary representation $U$ of a discrete quantum group $\hat{\cqG}$ on $\mathcal{H}$, we write $U^x$ for $Up_x$ as element of $B(\mathcal{H}_x)\otimes B(\mathcal{H})$.  Below, in the quantum case, we deviate slightly from our previous notation $(S,\epsilon)$ for Kazhdan pairs, and instead write $(E,\epsilon)$. This is to avoid possible confusion with the antipodes $S$ and $\hat S$.

\begin{definition}
    Let $\hat{\cqG}$ be a discrete quantum group and $U$ a unitary representation of $\hat{\cqG}$ on $\mathcal{H}$. 
    \begin{itemize}
    \item We say that $U$ has an {\it invariant} vector if there exists a unit vector $\xi \in \mathcal H$ such that for all $x \in \text{Irred}(\cqG)$ and $\eta \in \mathcal H_x$, we have that
    \[U^x(\eta \otimes \xi) = \eta \otimes \xi.\]
        \item Let $E\subset\mathrm{Irred}(\cqG)$ be a finite subset and $\epsilon>0$. We say that $U$ has an \textit{$(E,\epsilon)$-invariant vector} if there exists a unit vector $\xi\in \mathcal{H}$ such that for all $x\in E$ and $\eta\in \mathcal{H}_x$ we have that
        \begin{equation*}
            \norm{U^x\eta \otimes \xi-\eta\otimes \xi}<\epsilon\norm{\eta}.
        \end{equation*}
        \item We say that $U$ has \textit{almost invariant vectors} if, for all finite $E\subset \mathrm{Irred}(\cqG)$ and all $\epsilon>0$, $U$ has an $(E,\epsilon)$-invariant vector.
        \item We say that $\hat{\cqG}$ has \textit{property (T)} if every unitary representation of $\hat{\cqG}$ having almost invariant vectors has a non-zero invariant vector.
        \item A pair $(E,\epsilon)$, where  $E \subset \mathrm{Irred}(\cqG)$ is finite and $\epsilon>0$ is called a 
  \textit{Kazhdan pair} if every unitary representation of $\hat{\cqG}$ having an $(E,\epsilon)$-invariant vector also has a non-zero invariant vector. 
    \end{itemize}
\end{definition}

As shown in \cite{Fim10}, discrete quantum groups with property (T) are {\it unimodular} and {\it finitely generated}.  Here unimodular means that the Haar measure $h$ on $\cqG$ is a tracial state.  Equivalently, unimodularity is characterised at the level of $\hat \cqG$ by the equality of the left and right Haar weights. In this case, the Haar weight $\hat h = \hat h_L = \hat h_R$ is the semifinite trace given by \[\hat h (ap_x) = \dim x \text{Tr}_{x}(ap_x) \qquad  (x \in \text{Irred}(\cqG), a \in \ell^\infty(\hat \cqG)).\] 
A discrete quantum group $\hat \cqG$ is unimodular if and only if $\mathcal O (\cqG)$ is finitely generated as a $*$-algebra.  Equivalently, there exists a finite subset $E\subset \mathrm{Irred}(\cqG)$ containing the trivial representation such that any finite dimensional unitary representation of $\cqG$ is generated by $E$ by taking direct sums, tensor products and subrepresentations.  Such a set $E$ called a generating set for $\hat \cqG$.   

As is in the case of ordinary discrete groups, discrete quantum groups with property (T) always admit a Kazhdan pair. 
    
\begin{proposition}[\cite{Fim10}]
    A discrete quantum group $\hat{\cqG}$ has property (T) if and only if there exists a finite generating set $E \subset \text{Irred}(\cqG)$ and  an $\epsilon>0$ such that $(E,\epsilon)$ is a Kazhdan pair.
\end{proposition}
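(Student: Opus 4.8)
The plan is to prove both implications, with the reverse direction (existence of a Kazhdan pair $\Rightarrow$ property (T)) being essentially immediate and the forward direction carrying all the weight. For the reverse direction, suppose $(E,\epsilon)$ is a Kazhdan pair with $E$ finite and generating, and let $U$ be any unitary representation of $\hat\cqG$ with almost invariant vectors. Applying the definition of almost invariant vectors to the finite set $E$ and the number $\epsilon$ produces an $(E,\epsilon)$-invariant vector, and the Kazhdan pair property then yields a nonzero invariant vector; hence $U$ has an invariant vector, so $\hat\cqG$ has property (T). Note that the generating hypothesis on $E$ plays no role here: it is only relevant for the converse.

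For the forward direction, I would first invoke Fima's result (quoted above) that property (T) forces $\hat\cqG$ to be finitely generated, fixing a finite symmetric generating set $E \subseteq \mathrm{Irred}(\cqG)$ containing the trivial representation. I would then argue by contradiction: if no $(E,\epsilon)$ were a Kazhdan pair, then for each $n \in \mathbb N$ there is a unitary representation $U_n$ on $\mathcal H_n$ admitting an $(E,1/n)$-invariant unit vector $\xi_n$ but no nonzero invariant vector. Form $U = \bigoplus_n U_n$ on $\bigoplus_n \mathcal H_n$. Because $U^x = \bigoplus_n U_n^x$ acts diagonally, any invariant vector $(\zeta_n)_n$ of $U$ has each nonzero component $\zeta_{n_0}$ an invariant vector for $U_{n_0}$, so $U$ can have no nonzero invariant vector, contradicting property (T) \emph{provided} I can show $U$ has almost invariant vectors. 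Producing almost invariant vectors for $U$ is the crux, and this is where the generating property of $E$ enters.

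The key technical step is a propagation lemma: if $E$ generates and a unit vector $\xi$ is $(E,\delta)$-invariant for $U$, then for every finite $F \subseteq \mathrm{Irred}(\cqG)$ it is $(F,\rho(\delta))$-invariant, where $\rho(\delta) \to 0$ as $\delta \to 0$. Granting this, given finite $F$ and $\delta' > 0$ one chooses $\delta$ with $\rho(\delta) < \delta'$ and then $n > 1/\delta$, so $\xi_n$ (viewed in $\bigoplus_m \mathcal H_m$) is $(E,\delta)$-invariant and hence $(F,\delta')$-invariant for $U$; this gives almost invariant vectors and completes the contradiction. To prove the lemma I would extract from the defining identity $(\hat\Delta \otimes \id)(U) = U_{13}U_{23}$ together with the intertwiner relation $\hat\Delta(ap_x)S = Sap_x$ the fundamental formula
$$U^x = (S^* \otimes \id)\, U^y_{13}\, U^z_{23}\, (S \otimes \id) \qquad \text{for any isometry } S \in \mathrm{Mor}(u^x, u^y \otimes u^z).$$
From this, a Schmidt-decomposition estimate shows that $(y,\epsilon_1)$- and $(z,\epsilon_2)$-invariance of $\xi$ imply $(x,\epsilon_1+\epsilon_2)$-invariance for every $x \subseteq y \otimes z$ (the subrepresentation case $x \subseteq y$ being the specialisation with $z$ trivial, and direct sums being immediate). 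Since $E$ is symmetric and generating, every $x \in \mathrm{Irred}(\cqG)$ embeds in an iterated tensor product of members of $E$, so iterating the estimate bounds the invariance defect at $x$ by a finite sum of defects at elements of $E$, yielding the desired $\rho$.

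The main obstacle I anticipate is the bookkeeping in this iteration: one must track how the bound $\rho(\delta)$ grows with the length of the tensor word needed to reach a given $x$, and confirm that for each fixed finite $F$ the relevant words have bounded length, so that $\rho(\delta) \to 0$ as $\delta \to 0$ uniformly over $x \in F$. Making the fundamental formula precise — in particular correctly interpreting $U^x = Up_x$ inside $M(C_0(\hat\cqG) \otimes B_0(\mathcal H))$ and the leg-numbered product $U^y_{13}U^z_{23}$ across the multiplier algebra — is the other point demanding care, but the norm estimate itself is then a routine triangle-inequality argument once the orthonormality furnished by the Schmidt decomposition is used to combine the tensor legs.
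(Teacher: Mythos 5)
The paper states this proposition without proof, citing \cite{Fim10}, and your argument correctly reconstructs Fima's original one: the reverse implication is immediate from the definitions, and the forward implication combines finite generation (quoted separately from \cite{Fim10}, so no circularity) with the direct-sum contradiction and the propagation estimate based on the identity $U^x = (S^*\otimes\id)U^y_{13}U^z_{23}(S\otimes\id)$ for an isometry $S \in \mathrm{Mor}(u^x, u^y\otimes u^z)$, which bounds the invariance defect at $x \subseteq y \otimes z$ by the sum of the defects at $y$ and $z$. The bookkeeping you flag is handled cleanly by noting that an irreducible $x \subseteq s_1 \otimes \cdots \otimes s_k$ embeds in $s_1 \otimes x'$ for some irreducible $x' \subseteq s_2 \otimes \cdots \otimes s_k$, so induction on word length gives defect at most $k\delta$, uniformly over any fixed finite $F$; your proof is correct and matches the cited approach.
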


A consequence of unimodularity for property (T) discrete quantum groups is the bi-unitarity of all finite-dimensional unitary representations of the compact dual quantum group.

\begin{proposition}
    Let $\cqG$ be a compact quantum group such that $\hat{\cqG}$ has property (T). Then all unitary representations of $\cqG$ are bi-unitary. 
\end{proposition}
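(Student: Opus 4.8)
The plan is to translate bi-unitarity into a statement about the \emph{complex-conjugate representation}, and then invoke the fact, supplied by the cited work of Fima, that property (T) forces $\cqG$ to be of Kac type. Since bi-unitarity is only defined for finite-dimensional representations, and every unitary representation of a compact quantum group decomposes as a direct sum of finite-dimensional irreducibles, I would first reduce to the case of a fixed irreducible representative $u^x$. This reduction is legitimate because bi-unitarity behaves well under the two operations involved: for a direct sum $u = u_1 \oplus u_2$ the transpose acts block-diagonally, so $(T\otimes\id)(u)$ is unitary iff each $(T\otimes\id)(u_i)$ is; and for a unitary equivalence $v = (w\otimes 1)u(w^*\otimes 1)$ with $w$ a scalar unitary, using that $T$ is an anti-homomorphism on $B(\mathcal H)$ one computes $(T\otimes\id)(v) = (\overline{w}\otimes 1)\,(T\otimes\id)(u)\,(w^{t}\otimes 1)$, which is unitary whenever $(T\otimes\id)(u)$ is, since $\overline{w}$ and $w^{t}$ are again unitaries. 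Hence it suffices to check bi-unitarity on the chosen representatives $u^x$.

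The key elementary observation is the identity
\[
\big((T\otimes\id)(u)\big)^{*} = \overline{u}, \qquad \text{where } \overline{u} = \sum_{ij} e_{ij}\otimes u_{ij}^{*}
\]
is the complex-conjugate representation. Writing $u = \sum_{ij}e_{ij}\otimes u_{ij}$, this is a one-line computation: $(T\otimes\id)(u) = \sum_{ij}e_{ji}\otimes u_{ij}$, whose adjoint is $\sum_{ij}e_{ij}\otimes u_{ij}^{*} = \overline{u}$. A direct check using $\Delta(u_{ij}) = \sum_k u_{ik}\otimes u_{kj}$ shows that $\overline{u}$ is again a representation. Consequently $(T\otimes\id)(u)$ is unitary if and only if $\overline{u}$ is unitary; that is, $u$ is bi-unitary precisely when its conjugate representation $\overline{u}$ is itself unitary.

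It therefore remains to show that every conjugate representation $\overline{u^x}$ is unitary. This is exactly the defining property of a compact quantum group of Kac type: $\cqG$ is of Kac type (equivalently, the Haar state $h$ is tracial, equivalently all Woronowicz matrices satisfy $F_x = 1$, equivalently $\dim_q x = \dim x$) if and only if the conjugate of every finite-dimensional unitary representation is again unitary. By the result of Fima recalled above, property (T) of $\hat\cqG$ implies that $\hat\cqG$ is unimodular, i.e.\ $h$ is tracial, so $\cqG$ is of Kac type; this is precisely where the formula $\hat h(ap_x) = \dim x\,\Tr_x(ap_x)$, with ordinary dimensions and the \emph{unweighted} trace, enters, as it encodes $F_x = 1$. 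Combining this with the previous paragraph shows that each $u^x$, and hence every unitary representation, is bi-unitary.

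The main obstacle, should one want a self-contained argument rather than citing the standard Kac-type characterisation, is the implication ``$h$ tracial $\Rightarrow F_x = 1$.'' I would extract this from the Schur orthogonality relations for the matrix coefficients of $u^x$, in which $F_x$ appears as the density of $h$ against the unnormalised trace: traciality of $h$ forces the left and right orthogonality relations to coincide, and this together with the constraints $\Tr F_x = \Tr F_x^{-1} = \dim_q x$ and the positivity of $F_x$ pins down $F_x = 1$. Everything else in the argument is routine bookkeeping with the transpose and the conjugate representation.
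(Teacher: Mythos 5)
Your proof is correct, and its skeleton coincides with the paper's: both arguments reduce bi-unitarity of $u$ to unitarity of the contragredient $v=(T\otimes\id)(u^*)=\sum_{ij}e_{ij}\otimes u_{ij}^*$ (your $\overline{u}$ — the paper's $v$ is literally the same element), and both ultimately rest on Fima's theorem that property (T) forces unimodularity, i.e.\ traciality of the Haar state. Where you diverge is the finishing step. The paper needs no decomposition into irreducibles and no $F$-matrix formalism: it quotes from Maes--Van Daele that $R^{1/2}vR^{-1/2}$ is unitary for $R=(\id\otimes h)(v^*v)$, and then a two-line computation using traciality of $h$ gives $R=(T\otimes h)(u^*u)=1$, so $v$ itself is unitary. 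You instead reduce to irreducible representatives (your direct-sum and unitary-equivalence bookkeeping, including the identity $(T\otimes\id)(v)=(\overline{w}\otimes 1)(T\otimes\id)(u)(w^{t}\otimes 1)$, is correct, though strictly unnecessary, since the Kac-type fact holds for arbitrary finite-dimensional unitaries) and then invoke the standard characterisation of Kac type via triviality of the $F$-matrices, sketching its derivation from the orthogonality relations. The two routes are mathematically the same fact in different clothing — the paper's $R$ plays exactly the role of the normalisation matrix whose triviality your condition $F_x=1$ encodes — but the paper's version is shorter and self-contained modulo the cited unitarizability result, whereas yours makes explicit the sharper conceptual point that the proposition is really the statement ``Kac type implies bi-unitarity of all finite-dimensional representations,'' with property (T) entering only through unimodularity, together with the converse equivalence; that framing is a genuine, if minor, added value.
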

\begin{proof}
Let $u = \sum_{ij} e_{ij} \otimes u_{ij} \in \mbb{M}_n(\C) \otimes \mathcal O(\cqG)$ be a finite dimensional unitary representation of $\cqG$, and let $T$ be the transpose map on $\mbb{M}_n(\C)$.   Put $v=(T\otimes \id)(u^*) =  \sum_{ij} e_{ij} \otimes u_{ij}^*$.  Then $v$ is a representation of $\cqG$ (the {\it contragredient of $u$}) and $v$ is clearly unitary iff $(T\otimes \id)(u)$ is unitary. 

On the other hand, by the proof of \cite[Proposition 6.4]{MvD98}, $R^{1/2}vR^{-1/2}$ is unitary, where $R = (\id\otimes h)(v^*v)$. But using the fact that the Haar measure $h$ is tracial, we obtain  
    \begin{equation*}
        R = (\id\otimes h)(v^*v)=\sum_{ijk}e_{ij}\otimes h(u_{ki}u_{kj}^*)=\sum_{ijk}e_{ij}\otimes h(u_{kj}^*u_{ki})=(T\otimes h)(u^*u)=1.\qedhere
    \end{equation*}
\end{proof}

\subsection{Quantum channels from quantum groups}  \label{subsec:q-channels-from-q-groups}

As in the case of groups discussed in Section \ref{background}, quantum groups and their representations naturally give rise to interesting quantum channels.  There are many ways in which one can construct quantum channels of various flavours from these more general algebraic structures -- see for example   \cite{Ve22, BCLY20, CN13, LY22}. Here we will just introduce the natural analogue of the mixed unitary channels associated to representations of finite or discrete groups.  We emphasise, however, that in this more general setup, the resulting quantum channels are not necessarily mixed unitary.  Below, we present two     constructions of quantum channels, from compact and discrete quantum groups, and show that due to the duality these constructions are equivalent. Ultimately, we see that the duality allows two perspectives on a class of channels, which can become useful when studying examples. 

We first begin with the construction of quantum channels from compact quantum groups.

\begin{proposition}\label{prop:cqg-quantum-channel}
    Let $\cqG$ be a compact quantum group, $u$ a finite-dimensional unitary representation of $\cqG$ on a Hilbert space $\mathcal{H}$, $\pi$ a $*$-representation of $C(\cqG_{\max})$ on a finite-dimensional Hilbert space $\mathcal{K}$, and $\phi$ a state on $B(\mathcal{H})$. Then $\Phi = \Phi_{u, \phi, \pi}: B(\mathcal{K})\rightarrow B(\mathcal{K})$, given by
    \begin{equation*}
        \Phi(x)=(\phi\otimes\id)\big((\id\otimes \pi)(u)(1\otimes x)(\id\otimes \pi)(u^*)\big),
    \end{equation*}
    is a normal unital completely positive map. If $u$ is a bi-unitary (\textit{e.g.} if $\cqG$ is of Kac type $\iff \hat \cqG$ is unimodular), then $\Phi$ is trace-preserving.
\end{proposition}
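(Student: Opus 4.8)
The plan is to prove the two assertions separately: the normal, unital, completely positive claim by factoring $\Phi$ through standard building blocks, and trace-preservation by a matrix-coefficient computation that isolates exactly which orthogonality relation is required. First I would set $U := (\id \otimes \pi)(u) \in B(\mathcal{H}) \otimes B(\mathcal{K})$; since $\id \otimes \pi$ is a $*$-homomorphism and $u$ is unitary, $U$ is unitary, and $\Phi$ factors as $\Phi = (\phi \otimes \id) \circ \mathrm{Ad}_U \circ \iota$, where $\iota(x) = 1 \otimes x$ is a unital $*$-homomorphism, $\mathrm{Ad}_U(y) = U y U^*$ is a unital $*$-automorphism, and $\phi \otimes \id$ is the slice map by the state $\phi$. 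Each factor is completely positive, so $\Phi$ is completely positive; unitality is immediate from $U U^* = 1$ and $\phi(1) = 1$, giving $\Phi(1) = (\phi \otimes \id)(UU^*) = 1$; and normality is automatic since $\mathcal{K}$ is finite-dimensional.

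For trace-preservation I would expand in matrix units $\{e_{ij}\}$ of $B(\mathcal{H})$, writing $u = \sum_{ij} e_{ij} \otimes u_{ij}$ so that $U = \sum_{ij} e_{ij} \otimes \pi(u_{ij})$. A direct multiplication yields $U(1 \otimes x) U^* = \sum_{i,a,j} e_{ia} \otimes \pi(u_{ij}) \, x \, \pi(u_{aj}^*)$. Applying $\Tr_{\mathcal{K}} \circ (\phi \otimes \id) = \phi \otimes \Tr_{\mathcal{K}}$ and using cyclicity of $\Tr_{\mathcal{K}}$ to move $\pi(u_{aj}^*)$ to the front, the computation collapses to
\[ \Tr(\Phi(x)) = \Tr(\pi(M)\, x), \qquad M := \sum_{i,a,j} \phi(e_{ia})\, u_{aj}^*\, u_{ij} \in C(\cqG_{\max}). \]
Hence trace-preservation for all $x$ amounts to $\pi(M) = I_{\mathcal{K}}$, and it suffices to show the stronger identity $M = 1$.

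The crux is to recognise $M = 1$ as a restatement of bi-unitarity. Writing $v := (T \otimes \id)(u^*) = \sum_{ij} e_{ij} \otimes u_{ij}^*$ for the contragredient representation, one checks $v = ((T \otimes \id)(u))^*$, so that $u$ being bi-unitary is exactly the unitarity of $v$; in particular $vv^* = 1$, which unwinds to $\sum_j u_{aj}^*\, u_{ij} = \delta_{ai} 1$. Substituting this relation annihilates the off-diagonal terms of $M$ and leaves $M = \sum_a \phi(e_{aa})\, 1 = \phi(1)\, 1 = 1$, so $\pi(M) = I$ and $\Tr(\Phi(x)) = \Tr(x)$ for every $x$ and every state $\phi$. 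I expect the main obstacle to be conceptual rather than computational: one must notice that the relation forced by the trace is $\sum_j u_{aj}^*\, u_{ij} = \delta_{ai} 1$, an orthogonality summed over the second index with the conjugate on the left, which is precisely the unitarity of the contragredient $v$ and is \emph{not} implied by the unitarity of $u$ alone (whose relations pair the legs in a different pattern). This is exactly why bi-unitarity --- equivalently, by the preceding proposition, the Kac/unimodular hypothesis --- is the correct assumption; the remaining index bookkeeping and the identity $\sum_a \phi(e_{aa}) = \phi(1) = 1$ are routine.
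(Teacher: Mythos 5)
Your proof is correct and takes essentially the same route as the paper's: complete positivity and unitality by factoring $\Phi$ through $\iota$, $\mathrm{Ad}_U$ and the slice map, and trace preservation by expanding $u=\sum_{ij}e_{ij}\otimes u_{ij}$ and reducing to the orthogonality relation $\sum_j u_{aj}^*u_{ij}=\delta_{ai}1$, which is exactly the unitarity of the contragredient $(T\otimes\id)(u^*)$, i.e., bi-unitarity. Your index bookkeeping is in fact slightly cleaner than the paper's displayed computation (whose middle step places the adjoints as $u_{kj}u_{ij}^*$ rather than $u_{kj}^*u_{ij}$), and your remark that unitarity of $u$ alone would \emph{not} suffice correctly isolates where the Kac/unimodularity hypothesis enters.
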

\begin{proof}
    The map $\Phi$ is a composition of completely positive maps, so it is completely positive. Next, 
    \begin{equation*}
        \Phi(1)=(\phi\otimes\id)\big((\id\otimes \pi)(u)(1\otimes 1)(\id\otimes \pi)(u^*)\big)=(\phi\otimes \id)\big((\id\otimes \pi)(uu^*)\big)=1,
    \end{equation*}
    so $\Phi$ is unital. If $u$ is bi-unitary, write $u$ as $u=\sum_{ij}e_{ij}\otimes u_{ij}$. Then
    \begin{equation*}
        \Tr(\Phi(x))=\sum_{ijkl}\phi(e_{ij}e_{lk})\Tr(\pi(u_{ij})x\pi(u_{kl}^*))=\sum_{ijk}\phi(e_{ik})\Tr(x\pi(u_{kj}u_{ij}^*))=\sum_i\phi(e_{ii})\Tr(x),
    \end{equation*}
   showing that $\Phi$ is trace-preserving, since $\phi$ is a state.
\end{proof}

One can equivalently describe the above class of ucp maps in terms of the dual discrete quantum group. Let $\dqG=\hat \cqG$ be a discrete quantum group, $U \in M(C_0(\dqG) \otimes B_0(\mathcal K))$ a unitary representation of $\dqG$ on a finite dimensional Hilbert space $\mathcal{K}$, and $\psi \in \ell^1(\dqG)$ a normal state. Then it is easy to see that $\Psi = \Psi_{\psi, U}: B(\mathcal{K})\rightarrow B(\mathcal{K})$, given by
    \begin{equation*}
        \Psi(x)=(\psi\otimes \id)(U(1\otimes x)U^*),
    \end{equation*}
is a normal unital completely positive map.  

In the next proposition, we will need the notion of a {\it finitely supported} element $\psi \in \ell^1(\hat \cqG)$: There exists a finite rank  central projection $p \in C_{00}(\hat \cqG)$ such that $\psi(xp) = \psi(x)$ for all $x \in \ell^\infty(\hat \cqG)$

\begin{proposition} \label{prop:dqg-quantum-channel}
    Let $\cqG$ be a compact quantum group. For every map $\Phi = \Phi_{u,\phi,\pi}$ arising in  Proposition \ref{prop:cqg-quantum-channel}, there exists a finitely supported state $\psi \in \ell^1(\hat \cqG)$ and a unitary representation $U$ of $\hat \cqG$ such that $\Phi = \Psi_{\psi, U}$.  Conversely, given a finitely supported state $\psi \in \ell^1(\hat \cqG)$ and a finite-dimensional unitary representation $U$ of $\hat \cqG$ on $\mathcal K$, then the UCP map $\Psi_{\psi,U}:B(\mathcal K) \to B(\mathcal K)$ is given by $\Psi_{\psi,U} = \Phi_{u,\phi, \pi}$  for $u, \phi, \pi$ as in Proposition \ref{prop:cqg-quantum-channel}.  

\end{proposition}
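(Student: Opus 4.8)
The plan is to exploit the duality of Proposition \ref{prop:qg-dual-representations}, which is precisely the dictionary between the compact data $(u,\pi)$ and the discrete data $(\psi,U)$. The multiplicative unitary $\mathcal{V}=\bigoplus_x u^x$ is the object that implements this passage on both legs at once: applying a normal $*$-representation $\rho$ of $\ell^\infty(\hat\cqG)$ to its first leg yields a representation of $\cqG$, while applying a $*$-representation $\pi$ of $C(\cqG_{\max})$ to its second leg yields a representation of $\hat\cqG$. So the two constructions should be matched by transporting the ``averaging'' state across $\mathcal{V}$.

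\emph{Forward direction.} Starting from $\Phi=\Phi_{u,\phi,\pi}$, I would invoke Proposition \ref{prop:qg-dual-representations} to obtain the unique $*$-homomorphism $\rho:\ell^\infty(\hat\cqG)\to B(\mathcal{H})$ with $(\rho\otimes\id)(\mathcal{V})=u$, and set $U:=(\id\otimes\pi)(\mathcal{V})$, the unitary representation of $\hat\cqG$ on $\mathcal{K}$ dual to $\pi$. I then define $\psi:=\phi\circ\rho$; since $\mathcal{H}$ is finite-dimensional, $\phi$ is a normal state and $\rho$ a unital $*$-homomorphism, so $\psi$ is a normal state, hence lies in $\ell^1(\hat\cqG)$. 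The verification that $\Phi_{u,\phi,\pi}=\Psi_{\psi,U}$ is then a direct computation: writing $\mathcal{V}$ formally as $\sum_\alpha a_\alpha\otimes b_\alpha$ with $a_\alpha\in C_0(\hat\cqG)$ and $b_\alpha\in C(\cqG_{\max})$, both maps expand to
\[
x\mapsto\sum_{\alpha,\beta}\phi\big(\rho(a_\alpha)\rho(a_\beta)^*\big)\,\pi(b_\alpha)\,x\,\pi(b_\beta^*),
\]
once one uses $u=(\rho\otimes\id)(\mathcal{V})$ on the $\Phi$-side and $\psi=\phi\circ\rho$, $U=(\id\otimes\pi)(\mathcal{V})$ on the $\Psi$-side.

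\emph{Finite support and converse.} Because $u$ is finite-dimensional, it decomposes as a finite direct sum of irreducibles indexed by some finite $F\subseteq\mathrm{Irred}(\cqG)$, so the normal representation $\rho$ on the finite-dimensional space $\mathcal{H}$ must be supported on finitely many blocks, i.e. $\rho(a)=\rho(ap_F)$ with $p_F=\sum_{x\in F}p_x$. Hence $\psi(a)=\phi(\rho(ap_F))=\psi(ap_F)$, so $\psi$ is finitely supported. For the converse I would run the correspondence backwards: given a finitely supported state $\psi$ with support $p_F$ and a finite-dimensional unitary representation $U$, apply Proposition \ref{prop:qg-dual-representations} to write $U=(\id\otimes\pi)(\mathcal{V})$ for a unique $\pi$, take $\mathcal{H}=\bigoplus_{x\in F}\mathcal{H}_x$ with $\rho:\ell^\infty(\hat\cqG)\to B(\mathcal{H})$ the canonical block-diagonal representation (so that $u=(\rho\otimes\id)(\mathcal{V})=\bigoplus_{x\in F}u^x$), and let $\phi$ be any state on $B(\mathcal{H})$ extending $\psi$ from the block-diagonal subalgebra $\cong\bigoplus_{x\in F}B(\mathcal{H}_x)$. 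Then $\phi\circ\rho=\psi$ and the same computation gives $\Psi_{\psi,U}=\Phi_{u,\phi,\pi}$.

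The main obstacle is bookkeeping rather than conceptual: $\mathcal{V}$ lives in a multiplier algebra, so the formal expansion $\sum_\alpha a_\alpha\otimes b_\alpha$ must be justified. The cleanest way around this is to note that finite-dimensionality of $u$ (equivalently, finite support of $\psi$) lets one replace $\mathcal{V}$ by its genuine cutdown $(p_F\otimes 1)\mathcal{V}=\bigoplus_{x\in F}u^x$, which is an honest element of $C_0(\hat\cqG)\otimes C(\cqG_{\max})$ since $\rho(a)=\rho(ap_F)$. All computations then take place in finite-dimensional tensor products, where the leg-numbering manipulations are unambiguous and the interchange of $\phi\otimes\id$ with $\id\otimes\pi$ is immediate.
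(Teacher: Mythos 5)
Your argument is correct and matches the paper's proof almost exactly: the forward direction is the paper's verbatim (apply Proposition \ref{prop:qg-dual-representations} to obtain $\rho$ with $(\rho\otimes\id)(\mathcal{V})=u$ and $U=(\id\otimes\pi)(\mathcal{V})$, set $\psi=\phi\circ\rho$, and deduce finite support from the finitely many irreducible summands of $u$). The only divergence is in the converse, where the paper obtains the pair $\{\phi,u\}$ from the GNS representation of $\psi$ while you instead use the canonical block-diagonal representation on $\bigoplus_{x\in F}\mathcal{H}_x$ and extend $\psi$ to a state $\phi$ on $B(\mathcal{H})$ (e.g.\ via the trace-preserving conditional expectation onto $\bigoplus_{x\in F}B(\mathcal{H}_x)$); both routes are valid, and your explicit cutdown by $p_F$ correctly handles the multiplier-algebra bookkeeping that the paper leaves implicit.
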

\begin{proof}
    Let $u$, $\pi$, $\phi$, and $\Phi$ be as in Proposition \ref{prop:cqg-quantum-channel}. By Proposition \ref{prop:qg-dual-representations}, $U=(\id\otimes \pi)(\mathcal{V})$ is a unitary representation of $\hat{\cqG}$ and there exists a (normal) *-homomorphism $\rho: \ell^{\infty}(\hat \cqG)\rightarrow B(\mathcal{H})$ such that $(\rho\otimes\id)(\mathcal{V})=u$. Now $\psi=\phi\circ \rho$ is a normal state in $\ell^{1}(\hat{\cqG})$ and we find
    \begin{equation*}
        \Phi(x)=(\phi\otimes \id)\big((\rho\otimes \pi)(\mathcal{V})(1\otimes x)(\rho\otimes \pi)(\mathcal{V}^*)\big)=(\psi\otimes \id)(U(1\otimes x)U^*) = \Psi(x).\qedhere
    \end{equation*}  Note that $\psi$ finitely supported because it is supported on the central summands of $\ell^\infty(\hat \cqG)$ associated to the irreducible subrepresentations of $u$.

Conversely, if we start with a pair $\{\psi, U\}$ as in the statement of the proposition, we obtain a unique morphism $\pi:C(\cqG_{\max}) \to B(\mathcal K)$ from $U$ via Pontryagin duality, and we obtain a pair $\{\phi,u\}$ from the state $\psi$ via the GNS representation.  Note that the finite support condition on $\psi$ endures that $u$ is a finite dimensional representation of $\cqG$.
\end{proof}

We  end this section with a description of the fixed points of the ucp maps $\Psi_{\psi, U}$ for the cases that will concern us.  To do this, we recall a well-known result about fixed points of quantum channels.

\begin{theorem}{\cite[Theorem 4.25]{Wat15}}\label{thm:commutant}
    Let $\Phi:\mbb{M}_N\rightarrow\mbb{M}_N$ be a unital CPTP map with Kraus decomposition $\Phi(\rho)=\sum_i K_i\rho K_i^\ast$. The set of fixed points of $\Phi$ is the commutant of the Kraus operators $\{K_i\}'$.
\end{theorem}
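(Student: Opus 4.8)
The plan is to prove the two set inclusions separately, writing $\mathrm{Fix}(\Phi)=\{X\in\mbb{M}_N:\Phi(X)=X\}$ and reading the commutant $\{K_i\}'$ as the algebra of operators commuting with every Kraus operator $K_i$. Throughout I would use that ``unital'' means $\sum_i K_iK_i^*=I$ and ``trace-preserving'' means $\sum_i K_i^*K_i=I$. The inclusion $\{K_i\}'\subseteq\mathrm{Fix}(\Phi)$ is then a one-line computation using only unitality: if $XK_i=K_iX$ for all $i$, then
\[\Phi(X)=\sum_i K_iXK_i^*=\sum_i XK_iK_i^*=X\sum_i K_iK_i^*=X.\]

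For the reverse inclusion I would pass through a Stinespring dilation. Letting $d$ be the number of Kraus operators, define the isometry $W:\C^N\to\C^d\otimes\C^N$ by $W\xi=\sum_i e_i\otimes K_i^*\xi$; unitality gives exactly $W^*W=\sum_i K_iK_i^*=I$, and a direct check shows $\Phi(X)=W^*(I\otimes X)W$. This representation immediately yields the Kadison--Schwarz inequality in the sharp form
\[\Phi(X^*X)-\Phi(X)^*\Phi(X)=W^*(I\otimes X^*)(I-WW^*)(I\otimes X)W\ge 0,\]
since $I-WW^*$ is a projection. Now let $X$ be a fixed point. Then $\Phi(X^*X)-X^*X=\Phi(X^*X)-\Phi(X)^*\Phi(X)\ge 0$ is a positive operator, while trace-preservation forces $\Tr\big(\Phi(X^*X)-X^*X\big)=0$; a positive operator of trace zero vanishes, so equality holds in Kadison--Schwarz. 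Equality means $(I-WW^*)(I\otimes X)W=0$, i.e.\ $(I\otimes X)W=WW^*(I\otimes X)W=W\Phi(X)=WX$, and reading off the $i$-th tensor leg of $(I\otimes X)W=WX$ gives $XK_i^*=K_i^*X$ for every $i$. Finally, since the Kraus form is manifestly Hermitian-preserving, $X^*=\Phi(X^*)$ is again a fixed point, so the same argument applied to $X^*$ gives $X^*K_i^*=K_i^*X^*$, equivalently $K_iX=XK_i$; hence $X\in\{K_i\}'$, completing the proof.

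The step I expect to be the crux is converting the \emph{operator} inequality of Kadison--Schwarz into an \emph{equality}: the combination of the clean Stinespring ``defect'' formula with the fact that trace-preservation upgrades a positive-semidefinite difference of trace zero to an exact operator identity is what makes the commutation relations fall out. One subtlety to handle carefully is that this argument directly produces commutation with the adjoints $K_i^*$, so invoking Hermitian-preservation to re-run it on the fixed point $X^*$ is needed to recover commutation with the $K_i$ themselves (and to see that, for unital trace-preserving channels, $\{K_i\}'$ and $\{K_i,K_i^*\}'$ in fact coincide).
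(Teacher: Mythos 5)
Your proof is correct and complete. Note that the paper offers no proof of this statement: it quotes it directly from \cite[Theorem 4.25]{Wat15}, adding only the remark that the fixed-point set does not depend on the chosen Kraus representation, so the relevant comparison is with the standard textbook argument. That argument is a direct computation: for a fixed point $X$ (whence also $\Phi(X^*)=\Phi(X)^*=X^*$), unitality gives
\[
\sum_i \bigl(K_iX-XK_i\bigr)\bigl(K_iX-XK_i\bigr)^* \;=\; \Phi(XX^*)-XX^*,
\]
whose trace vanishes by trace preservation; since each summand is positive semidefinite, every commutator $[K_i,X]$ is zero, and the easy inclusion $\{K_i\}'\subseteq\mathrm{Fix}(\Phi)$ follows from unitality exactly as in your first display. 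Your route through the Stinespring isometry $W$ and equality in the Kadison--Schwarz inequality is a genuinely different packaging, but it rests on precisely the same mechanism: a positive semidefinite operator of trace zero must vanish, with positivity supplied by complete positivity/unitality and the trace-zero condition by trace preservation. What your version buys is conceptual clarity --- the defect formula $\Phi(X^*X)-\Phi(X)^*\Phi(X)=W^*(I\otimes X^*)(I-WW^*)(I\otimes X)W$ isolates exactly where multiplicativity of $\Phi$ fails on a fixed point --- and this formulation is the one that survives in more general (e.g.\ infinite-dimensional or non-tracial) settings, where one replaces the trace by an invariant faithful state. The cost is the small detour you correctly flagged: equality in Kadison--Schwarz yields $(I\otimes X)W=WX$, hence commutation with the adjoints $K_i^*$ only, so you must re-run the argument on $X^*$ (legitimate, since the Kraus form is Hermitian-preserving, so $X^*$ is again fixed) to obtain $[K_i,X]=0$; the direct commutator computation produces commutation with the $K_i$ in a single pass. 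Both arguments confirm your closing observation that for unital trace-preserving channels $\{K_i\}'=\{K_i,K_i^*\}'$, since the fixed-point set is a self-adjoint set.
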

Note that this result is independent of the choice of Kraus representation.

Now let $\hat \cqG$ be a discrete quantum group with {\it finite} generating set $E \subset \text{Irred}(G)$, and let $U \in M(C_0(
    \hat \cqG
    ) \otimes B_0(\mathcal K))$ a finite-dimensional representation.   Let $p_E = \sum_{x \in E} p_x \in C_{00}(\hat \cqG)$ be the largest central projection supported on  $E$, and let $\psi \in \ell^1(\hat \cqG)$ be a state supported on $E$. 
 That is, $\psi(p_Ex) = \psi(x)$ for all $x \in \ell^\infty(\hat \cqG)$.

 In the following two results, we consider the associated normal UCP map $\Psi_{\psi,U}:B(\mathcal K) \to B(\mathcal K)$.  This map should be interpreted as the  quantum group generalization of the mixed  unitary channel 
 \begin{align}\label{eqn:random-unitary}\rho \mapsto \sum_{s \in E} \psi(s)\pi(s)\rho\pi(s)^*,\end{align}
 where $\psi$ is some probability density supported on a generating set $E$ of a group $\Gamma$. 

\begin{theorem}\label{thm:fixed-points}
Assume that the restriction 
 of $\psi$ to $p_E\ell^\infty(\hat \cqG)p_E = \bigoplus_{s \in E} B(\mathcal H_s)$ is faithful and that $\Psi_{\psi,U}$ is trace-preserving (which holds automatically if $\hat \cqG$ is unimodular).  Then  $\rho\in B(\mathcal K)$ is a fixed point of $\Psi_{\psi,U}$ if and only if it is an intertwiner of $U$.
\end{theorem}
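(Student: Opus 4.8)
The plan is to apply Theorem~\ref{thm:commutant}: since $\Psi_{\psi,U}$ is completely positive, unital and (by hypothesis) trace-preserving, it is a bistochastic channel, so its fixed-point set is the commutant of any Kraus family. This reduces the statement to two tasks: (i) writing down an explicit Kraus decomposition of $\Psi_{\psi,U}$, and (ii) showing that the commutant of those Kraus operators is exactly the set of intertwiners of $U$.

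For (i), I would first use that $\psi$ is supported on $p_E$ to reduce the defining formula to a finite sum, $\Psi_{\psi,U}(x)=\sum_{s\in E}(\psi_s\otimes\id)\big(U^s(1\otimes x)(U^s)^*\big)$, where $\psi_s=\psi|_{B(\mathcal H_s)}$ and $U^s=Up_s\in B(\mathcal H_s)\otimes B(\mathcal K)$. Faithfulness of $\psi$ on $\bigoplus_{s\in E}B(\mathcal H_s)$ means each $\psi_s$ has a strictly positive density matrix $Q_s$; diagonalising $Q_s=\sum_i\lambda^s_i\,|e^s_i\rangle\langle e^s_i|$ with all $\lambda^s_i>0$, and expanding $1\otimes x=\sum_b(|e^s_b\rangle\otimes 1)\,x\,(\langle e^s_b|\otimes 1)$, produces the Kraus family $K_{s,i,b}=\sqrt{\lambda^s_i}\,(\langle e^s_i|\otimes 1)\,U^s\,(|e^s_b\rangle\otimes 1)$. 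Next I would invoke Pontryagin duality (Proposition~\ref{prop:qg-dual-representations}) to write $U=(\id\otimes\pi)(\mathcal V)$ for a $*$-representation $\pi:C(\cqG_{\max})\to B(\mathcal K)$, so that $U^s=(\id\otimes\pi)(u^s)$ and hence $K_{s,i,b}=\sqrt{\lambda^s_i}\,\pi(u^s_{e^s_i,e^s_b})$ is a nonzero scalar times the $\pi$-image of a matrix coefficient of $u^s$. The role of faithfulness is precisely that the scalars $\sqrt{\lambda^s_i}$ never vanish, which guarantees $\mathrm{span}\{K_{s,i,b}\}=\pi\big(\mathrm{span}\{u^s_{\xi,\eta}:s\in E,\ \xi,\eta\in\mathcal H_s\}\big)$, rather than some proper subspace.

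For (ii), I would use that the commutant of a set of operators equals the commutant of the unital algebra it generates. Since $E$ is a finite generating set for $\hat\cqG$ (such a set exists because property (T) forces $\hat\cqG$ to be finitely generated), the matrix coefficients of $u^s$, $s\in E$, generate $\mathcal O(\cqG)$ as an algebra: direct sums, tensor products and subrepresentations translate respectively into unions, products and linear compressions of matrix coefficients, so every $u^x_{\xi,\eta}$ lies in the algebra generated by the $E$-coefficients. Applying the homomorphism $\pi$, the algebra generated by $\{K_{s,i,b}\}$ is $\pi(\mathcal O(\cqG))$, whence $\{K_{s,i,b}\}'=\pi(\mathcal O(\cqG))'$. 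By definition this is the set of $\rho\in B(\mathcal K)$ commuting with every $\pi(u^x_{ij})$, i.e.\ satisfying $(1\otimes\rho)U^x=U^x(1\otimes\rho)$ for all $x\in\mathrm{Irred}(\cqG)$, which is exactly the set of intertwiners of $U$ (here using $\mathcal O(\cqG)=\mathrm{span}\{u^x_{ij}\}$ and $U^x=(\id\otimes\pi)(u^x)$). Combined with Theorem~\ref{thm:commutant}, this proves the equivalence.

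As a consistency check, the easy inclusion ``intertwiner $\Rightarrow$ fixed point'' can be verified directly and without faithfulness: if $(1\otimes\rho)U^s=U^s(1\otimes\rho)$ then unitarity of $U^s$ gives $U^s(1\otimes\rho)(U^s)^*=1\otimes\rho$, so $\Psi_{\psi,U}(\rho)=\sum_{s\in E}\psi_s(1)\rho=\psi(p_E)\rho=\rho$. The substantive content, and the main obstacle, is the reverse inclusion: faithfulness is indispensable for keeping the commutant from being too large (it is what forces the Kraus operators to span \emph{all} the relevant matrix coefficients), and the generating property of $E$ is the essential ingredient that upgrades ``commutes with the coefficients of $u^s$, $s\in E$'' to ``intertwines every $U^x$''. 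I expect the only delicate bookkeeping to be confirming that subrepresentation and tensor-product coefficients stay inside the (non-$*$) algebra generated by the $E$-coefficients, so that no appeal to contragredients is needed.
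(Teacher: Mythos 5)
Your proposal is correct and follows essentially the same route as the paper: diagonalise the densities of $\psi_s$ to obtain the Kraus family $\sqrt{\lambda^s_i}\,U^s_{ib}$ (with faithfulness ensuring no Kraus operator drops out), apply Theorem~\ref{thm:commutant}, and use the generating property of $E$ to pass from commuting with the $E$-coefficients to intertwining all of $U$. Your step (ii), identifying the commutant with $\pi(\mathcal O(\cqG))'$ via the closure of matrix coefficients under direct sums, tensor products and subrepresentations, is just a careful unpacking of the paper's terse ``As $E$ is generating, we conclude $(1\otimes\rho)U=U(1\otimes\rho)$,'' and your worry about contragredients is moot since the paper's definition of generating set does not involve them.
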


\begin{proof}

Let $U^s \in B(\mathcal H_s) \otimes B(\mathcal K)$ denote the $s$-th component of $U$ for $s \in \text{Irred}(\cqG)$. Also let $\psi_s$ be the restriction of $\psi$ to $B(\mathcal H_s)$.  Choosing appropriate bases of matrix units $e_{ij}^s$ of $B(\mathcal H_s)$, we may assume that the density of $\psi_s$ is diagonal and so $\psi_s(e_{ij}) = \lambda_{i,s}\delta_{i,j} > 0$.  (These coefficients are all non-zero by the faithfulness assumption.)  Then we compute 
\begin{align*}
        \Psi_{\psi,U}(\rho)&=\sum_{s\in E}\sum_{a,b,c} \psi_s(e_{ab}^s)U^s_{ac}\rho(U^s_{bc})^\ast\\
        &=\sum_{s\in E}\sum_{a,c} \lambda_{a,s}U^s_{ac}\rho(U^s_{ac})^\ast .
    \end{align*}
    By \cref{thm:commutant}, $\rho$ is a fixed point of $\Psi_{\psi,U}$ if and only if $\sqrt{\lambda_{a,s}}U^s_{ac}\rho=\rho\sqrt{\lambda_{a,s}}U^s_{ac}$ for all $s\in E$ and $1\leq a,c \le \dim \mathcal H_s$. As $E$ is generating, we conclude that $(1 \otimes \rho)U = U(1 \otimes \rho)$.  The reverse implication is immediate.
\end{proof}

An immediate consequence of interest in the next section is:

\begin{corollary} \label{cor:mulitplicity}
    If $U$ is an irreducible finite dimensional  representation of a unimodular discrete quantum group $\hat \cqG$, the eigenvalue $1$ of the unital channel $\Psi_{\psi,U}$ has multiplicity~$1$.
\end{corollary}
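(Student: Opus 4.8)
The plan is to obtain the statement as an essentially immediate consequence of Theorem~\ref{thm:fixed-points} combined with Schur's lemma, so that almost all of the work is already done. First I would set up the hypotheses: since $\hat{\cqG}$ is unimodular it is finitely generated, so we may fix a finite generating set $E \subset \text{Irred}(\cqG)$ and take $\psi \in \ell^1(\hat{\cqG})$ to be a state supported on $E$ whose restriction to $p_E \ell^\infty(\hat{\cqG}) p_E$ is faithful. Unimodularity also makes $\Psi_{\psi,U}$ automatically trace-preserving, so both hypotheses of Theorem~\ref{thm:fixed-points} are in force. Observing that the $1$-eigenspace of the linear map $\Psi_{\psi,U}$ on $B(\mathcal K)$ is by definition the set of $\rho$ with $\Psi_{\psi,U}(\rho)=\rho$, i.e. the fixed-point set, Theorem~\ref{thm:fixed-points} identifies it with the self-intertwiner space $\text{Mor}(U,U)=\{\rho \in B(\mathcal K) : (1\otimes \rho)U = U(1\otimes \rho)\}$.

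The remaining step is to show $\text{Mor}(U,U) = \mathbb C 1$ whenever $U$ is irreducible, which is exactly a Schur's lemma statement. I would make this precise through Pontryagin duality: by Proposition~\ref{prop:qg-dual-representations} there is a unique $*$-representation $\pi : C(\cqG_{\max}) \to B(\mathcal K)$ with $U = (\id \otimes \pi)(\mathcal V)$. Since $\mathcal V = \bigoplus_x u^x$ and the matrix coefficients $u^x_{ij}$ span the dense subalgebra $\mathcal O(\cqG)$, the intertwining condition $(1\otimes \rho)U = U(1\otimes \rho)$ is equivalent to $\rho$ commuting with every $\pi(u^x_{ij})$, i.e. $\rho \in \pi(C(\cqG_{\max}))'$. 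Irreducibility of $U$ is precisely irreducibility of $\pi$ (invariant subspaces of the unitary $U$ correspond to projections in its self-intertwiner algebra, hence to $\pi$-invariant closed subspaces), so the standard Schur lemma for irreducible $*$-representations of C*-algebras gives $\pi(C(\cqG_{\max}))' = \mathbb C 1$. Hence the $1$-eigenspace is spanned by the identity and the eigenvalue $1$ has multiplicity $1$.

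Since there is so little to prove, I do not expect a serious obstacle; the only point requiring genuine care is the precise meaning of irreducibility for a unitary representation of the discrete quantum group $\hat{\cqG}$ and its clean translation into the triviality of $\text{Mor}(U,U)$. If one prefers to avoid the explicit passage to the dual $*$-representation, the same conclusion can be reached by applying a representation-theoretic Schur's lemma directly to $U$ in the spirit of Lemma~\ref{Schur}, using that $\text{Mor}(U,U)$ is a finite-dimensional $*$-algebra whose only projections are $0$ and $1$ exactly when $U$ has no nontrivial invariant subspaces. Either way the argument is short, and one may additionally remark that, because $\Psi_{\psi,U}$ is a unital quantum channel, its peripheral spectrum is semisimple, so this one-dimensionality of the fixed-point space is also a statement about the algebraic multiplicity of the eigenvalue $1$.
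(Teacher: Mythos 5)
Your proof is correct and follows essentially the same route as the paper, which presents the corollary as an immediate consequence of Theorem~\ref{thm:fixed-points}: the $1$-eigenspace is the fixed-point set, which by that theorem equals $\mathrm{Mor}(U,U)$, and irreducibility forces this to be $\mathbb{C}1$ by Schur's lemma. Your explicit verification of the hypotheses (faithfulness of $\psi$ on $p_E\ell^\infty(\hat\cqG)p_E$, trace preservation from unimodularity) and your unpacking of Schur's lemma via the dual $*$-representation of $C(\cqG_{\max})$ are just details the paper leaves implicit.
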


\begin{comment}

It follows that $\ang{X,\Psi_{\psi,U}(X)}\leq(1-\frac{\varepsilon^2}{2\dim H_E})\ang{X,X}$ for all Hermitian $X\in\mbb{M}_N$ orthogonal to $I$. As such, if a projector $\Pi\in\mbb{M}_N$ has dimension $\Tr(\Pi)\leq\frac{N}{2}$, $X=\Pi-\tr(\Pi)I$ is traceless and hence
\begin{align*}
    \ang{I-\Pi,\Psi_{\psi,U}(\Pi)}&=\ang{(1-\tr(\Pi))I-X,\Psi_{\psi,U}(X)+\tr(\Pi)I}\\
    &=N\tr(\Pi)(1-\tr(\Pi))-\ang{X,\Psi_{\psi,X}(X)}\\
    &\geq N\tr(\Pi)(1-\tr(\Pi))-(1-\frac{\varepsilon^2}{2\dim H_E})\ang{X,X}\\
    &= \frac{\varepsilon^2}{2\dim H_E}\Tr(\Pi)(1-\tr(\Pi))\\
    &\geq\frac{\varepsilon^2}{4\dim H_E}\Tr(\Pi),
\end{align*}
giving quantum edge expansion $h(\Psi_{\psi,U})\geq\frac{\varepsilon^2}{4\dim H_E}$.
\end{comment}

\section{Quantum expanders from property (T) quantum groups } \label{sec:edge-expanders}

Throughout this section we fix a discrete quantum group $ \hat \cqG$ with property (T), along with a  fixed Kazhdan pair $(E,\epsilon)$ for $\hat \cqG$.  We furthermore can and will always take $E$ to be {\it symmetric}. That is,  $E = \bar E$, where $\bar E = \{\bar x: x \in E\}$ and $\bar x$ denotes the conjugate representation associated to $x \in \text{Irred}(\cqG)$.  In the following proposition, we let $\mathcal H_E = \bigoplus_{x \in E} \mathcal H_x$ and consider the natural embedding $p_E\ell^\infty(\hat \cqG) p_E\subset B(\mathcal H_E)$.  Let $\psi_E$ be the normal tracial state on $\ell^\infty(\hat \cqG)$ defined by $\psi_E(a)=\frac{1}{\dim \mathcal H_E}\Tr_{B(\mathcal H_E)}\parens*{ap_E}$. Note that $\psi_E$ is nothing other than the normalised trace on $\mathcal H_E$, lifted to $\ell^\infty(\hat \cqG)$ in the obvious way.  We are interested in studying the expansion properties of the channel \begin{align}\label{eqn:Kazhdan-channel}\Psi_{\psi_E,U}:B(\mathcal K) \to B(\mathcal K), \qquad \Psi_{\psi_E,U}(\rho) = (\psi_E \otimes \id)(U(1 \otimes \rho)U^*) \qquad (\rho \in B(\mathcal K)),
\end{align}
where $U\in M(C_0(\hat \cqG) \otimes B_0(\mathcal K))$ is a finite-dimensional irreducible unitary representation of $\hat \cqG$ on $\mathcal K$.  

We begin with a definition and a couple of observations. 

\begin{definition}\label{def:symmetric-state}
A normal linear functional $\psi \in \ell^1(\hat \cqG)$ is called {\it symmetric} if
\[
\psi\circ \hat S = \psi.
\]
\end{definition}

\begin{lemma}\label{lem:self-adjoint}
Let $\psi \in \ell^1(\hat \cqG)$ be a symmetric state.
For any unitary representation $V\in M(C_0(\hat \cqG) \otimes B_0(\mathcal K))$, the map $X = (\psi \otimes \id)V \in B(\mathcal K)$ is self-adjoint.
\end{lemma}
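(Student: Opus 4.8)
The plan is to prove self-adjointness by computing $X^*$ directly and showing it equals $X$, using only the two properties available: the defining characterisation of the antipode, $(\hat S \otimes \id)V = V^*$, and the symmetry hypothesis $\psi \circ \hat S = \psi$ from Definition \ref{def:symmetric-state}. The one preliminary observation I would record is that, since $\psi$ is a state, it is a self-adjoint functional (i.e. $\psi(a^*) = \overline{\psi(a)}$), and consequently the adjoint on $B(\mathcal K)$ commutes with the slice map $(\psi \otimes \id)$ in the sense that $\big((\psi \otimes \id)(T)\big)^* = (\psi \otimes \id)(T^*)$.

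With this in hand, I would run the chain of identities
\begin{align*}
X^* &= \big((\psi \otimes \id)V\big)^* = (\psi \otimes \id)(V^*) \\
&= (\psi \otimes \id)\big((\hat S \otimes \id)V\big) = \big((\psi \circ \hat S) \otimes \id\big)V = (\psi \otimes \id)V = X.
\end{align*}
Here the first equality is the definition of $X$; the second is the self-adjointness of the slice noted above; the third substitutes the antipode characterisation $V^* = (\hat S \otimes \id)V$; the fourth is functoriality of the slice map under composition in the first leg; and the last invokes the symmetry $\psi \circ \hat S = \psi$.

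The step that requires genuine care, and which I expect to be the main obstacle, is the insertion of the antipode: $\hat S$ is only densely defined and generally unbounded, so neither $(\hat S \otimes \id)V = V^*$ nor its appearance inside the slice is literally meaningful as stated. To make the argument rigorous I would decompose over the central summands, writing $V^x = (p_x \otimes 1)V \in B(\mathcal H_x) \otimes B(\mathcal K)$ for each $x \in \mathrm{Irred}(\cqG)$. Each $V^x$ is an honest unitary in a tensor product whose first leg $B(\mathcal H_x)$ is finite-dimensional, so $\hat S$ restricts to a bounded map there and the relation $(\hat S \otimes \id)V^x = (V^x)^*$ holds unambiguously. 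Since $\psi$ is normal, I have $\psi = \sum_x \psi(\,\cdot\, p_x)$ with $\sum_x \|\psi(\,\cdot\, p_x)\| = 1$, whence $X = \sum_x \big(\psi(\,\cdot\, p_x) \otimes \id\big)(V^x)$ converges in norm in $B(\mathcal K)$. I would then verify the displayed chain blockwise on each $V^x$ — where it is pure finite-dimensional linear algebra — and reassemble using the norm-convergence of the series, which sidesteps all domain issues for the unbounded antipode.
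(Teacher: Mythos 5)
Your main chain of equalities is exactly the paper's proof: the paper also notes that positivity of $\psi$ gives $\big((\psi\otimes\id)(T)\big)^*=(\psi\otimes\id)(T^*)$, substitutes $(\hat S\otimes\id)V=V^*$, and concludes by the symmetry $\psi\circ\hat S=\psi$, so in substance you have reproduced the intended argument (the paper simply treats the antipode identity as the dense definition of $\hat S$ and does not belabour the domain issue).

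One correction to your rigor addendum: the blockwise relation you assert, $(\hat S\otimes\id)V^x=(V^x)^*$, is not quite right, because the antipode is anti-multiplicative and maps the central summand $B(\mathcal H_x)$ onto $B(\mathcal H_{\bar x})$ (indeed $\hat S(p_x)=p_{\bar x}$). The correct blockwise statement is $(\hat S\otimes\id)V^x=(V^{\bar x})^*$, so the chain cannot be verified block-by-block on a single $x$; instead you must pair the blocks $x$ and $\bar x$. Concretely, writing $\psi_x=\psi(\,\cdot\,p_x)$, the symmetry hypothesis reads blockwise as $\psi_x\circ\hat S=\psi_{\bar x}$ on $B(\mathcal H_{\bar x})$, whence $\big((\psi_x\otimes\id)V^x\big)^*=(\psi_{\bar x}\otimes\id)(V^{\bar x})$, and summing over $x$ (using that $x\mapsto\bar x$ is a bijection of $\mathrm{Irred}(\cqG)$ together with the norm convergence you noted) recovers $X^*=X$. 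This is a fixable bookkeeping slip, not a gap in the idea: the symmetry of $\psi$ is precisely what absorbs the $x\leftrightarrow\bar x$ interchange, and with this repair your regularised version of the paper's one-line computation is fully rigorous.
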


\begin{proof}
Note that  $\hat S$ is (densely) defined by the equations $(\hat S \otimes \id)V  = V^*$ for all unitary representations of $\hat \cqG$.  Because $\psi$ is positive, we have
\[
X^* = ((\psi \otimes \id)V)^* = (\psi \otimes \id)(V^*) = (\psi\circ \hat S \otimes \id) V = (\psi \otimes \id) V = X.
\qedhere\]
\end{proof}

Note that the above lemma applies to the case where $\psi = \psi_E$,  since $\psi_E \circ \hat S = \psi_{\bar E} = \psi_E$.  Here we use the symmetry condition $E =\bar E$

\begin{lemma} \label{lem:technical}
    Let $\mathcal H$ be a finite dimensional Hilbert space and $0 < \epsilon < 1$.   Suppose $A\in B(\mathcal H)$ satisfies $\norm{A}\leq 1$ and there exists a unit vector $\eta \in \mathcal H$ such that $\latRe \braket{\eta}{A\eta}\leq 1-\epsilon$. Then, \[\latRe\tr(A)\leq1-\frac{\epsilon}{\dim \mathcal H}.\]
\end{lemma}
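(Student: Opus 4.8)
The plan is to reduce the whole statement to a single pointwise estimate on the diagonal entries of $A$ in a well-chosen orthonormal basis. Write $n = \dim \mathcal H$ and recall that $\tr$ denotes the \emph{normalised} trace $\tr(\cdot) = \frac1n \Tr(\cdot)$, so that the claimed bound $\latRe \tr(A) \le 1 - \frac{\epsilon}{n}$ is equivalent to the unnormalised inequality $\latRe \Tr(A) \le n - \epsilon$. It is this latter form that I would aim to prove.

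First I would record the only consequence of the norm hypothesis that is needed: for \emph{every} unit vector $v \in \mathcal H$ one has $\abs{\braket{v}{Av}} \le \norm{A}\,\norm{v}^2 \le 1$, and hence $\latRe\braket{v}{Av} \le 1$. The distinguished vector $\eta$ satisfies the strictly better bound $\latRe\braket{\eta}{A\eta} \le 1 - \epsilon$ by assumption. The key idea is then to make $\eta$ a coordinate direction: extend $\eta$ to an orthonormal basis $e_1 = \eta, e_2, \dots, e_n$ of $\mathcal H$ and expand $\Tr(A) = \sum_{i=1}^n \braket{e_i}{Ae_i}$. Taking real parts, applying the generic bound $\latRe\braket{e_i}{Ae_i} \le 1$ to the $n-1$ vectors $e_2,\dots,e_n$ and the sharper bound to $e_1 = \eta$, I obtain $\latRe\Tr(A) \le (1-\epsilon) + (n-1) = n - \epsilon$. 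Dividing by $n$ gives $\latRe\tr(A) \le 1 - \frac{\epsilon}{n}$, as required.

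There is essentially no obstacle in this argument; all of its content lies in choosing the basis to contain $\eta$, which lets the single improved coordinate estimate propagate into the full trace. The bound is sharp, as witnessed by $A = \mathrm{diag}(1-\epsilon,1,\dots,1)$. If one prefers a more structural phrasing, the same result follows by passing to the Hermitian part $B = \frac12(A + A^*)$, for which $\norm{B} \le 1$, $\latRe\Tr(A) = \Tr(B)$ and $\latRe\braket{\eta}{A\eta} = \braket{\eta}{B\eta}$; then $I - B \ge 0$, and expanding $\Tr(I-B) \ge \braket{\eta}{(I-B)\eta} \ge \epsilon$ over an orthonormal basis containing $\eta$ (every summand being nonnegative by positivity) gives $\Tr(B) \le n - \epsilon$. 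I would, however, favour the direct diagonal computation, since it is shorter and avoids invoking positivity.
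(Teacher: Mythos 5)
Your proof is correct and is essentially identical to the paper's: both extend $\eta$ to an orthonormal basis, bound the $\eta$-diagonal entry by $1-\epsilon$ and the remaining $\dim\mathcal H - 1$ diagonal entries by $1$ using $\norm{A}\leq 1$, then divide by $\dim\mathcal H$. The alternative Hermitian-part phrasing you mention is a nice remark but, as you note, unnecessary.
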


\begin{proof}
    Write $d=\dim \mathcal H$. Extend $\eta$ to an orthonormal basis $\eta_1 = \eta,\eta_2,\ldots,\eta_d$ for $\mathcal H$. Then,
    \[\latRe\tr(A)=\frac{1}{d}\latRe(\parens*{\braket{\eta_1}{A}{\eta_1}+\ldots+\braket{\eta_d}{A}{\eta_d}})\leq\frac{1}{d}\parens*{1-\epsilon+d-1}=1-\frac{\epsilon}{d}.\qedhere\]
\end{proof}

We now come to an estimate which will lead to the main result of the section.  

\begin{proposition}\label{prop:main-estimate} Let $V\in M(C_0(\hat{\cqG})\otimes B_0(\mathcal K))$ be an unitary representation 
    of $\hat \cqG$ on a Hilbert space $\mathcal K$.  Let $\mathcal K_0 \subset \mathcal K$ be the closed invariant subspace of invariant vectors for $V$. Then, for any unit vector $\xi \in \mathcal K_0^\perp$, \[\latRe\braket{\xi}{(\psi_E \otimes \id )(V)}{\xi}\leq 1-\frac{\epsilon^2}{2\dim \mathcal H_E}.\]
\end{proposition}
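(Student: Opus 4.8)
We want to show that for a unitary representation $V$ of $\hat{\cqG}$ on $\mathcal{K}$, with $\mathcal{K}_0$ the invariant subspace, any unit vector $\xi \in \mathcal{K}_0^\perp$ satisfies
$$\operatorname{Re}\braket{\xi}{(\psi_E \otimes \id)(V)}{\xi} \leq 1 - \frac{\epsilon^2}{2\dim\mathcal{H}_E}.$$

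**Key context:** $(E,\epsilon)$ is a Kazhdan pair, $E$ symmetric. The operator $X = (\psi_E \otimes \id)(V)$ is self-adjoint by Lemma \ref{lem:self-adjoint}. We have $\psi_E(a) = \frac{1}{\dim\mathcal{H}_E}\Tr_{B(\mathcal{H}_E)}(ap_E)$.

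**The structure of $X$.** Let me expand $X = (\psi_E \otimes \id)(V)$. Since $\psi_E$ is supported on $E$ and equals $\frac{1}{\dim\mathcal{H}_E}$ times the trace on each block,
$$X = \frac{1}{\dim\mathcal{H}_E}\sum_{x\in E}\sum_{i} V^x_{ii} = \frac{1}{\dim\mathcal{H}_E}\sum_{x\in E}(\Tr_x \otimes \id)(V^x).$$

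So $X = \frac{1}{\dim\mathcal{H}_E}\sum_{x\in E}\sum_i V^x_{ii}$ where $V^x_{ij} \in B(\mathcal{K})$ are the "matrix entries" of $V^x \in B(\mathcal{H}_x)\otimes B(\mathcal{K})$.

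**Strategy — contrapositive via Kazhdan property.** The intuition: if $\operatorname{Re}\braket{\xi}{X\xi}$ were too close to $1$, then $\xi$ would be "almost invariant," forcing $\xi$ to be close to $\mathcal{K}_0$, contradicting $\xi \in \mathcal{K}_0^\perp$.

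Here's the plan:

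**Step 1 — Relate the expectation of $X$ to invariance defects.** For each $x \in E$ and each basis vector $\eta^x_i \in \mathcal{H}_x$, consider the quantity $\|V^x(\eta^x_i \otimes \xi) - \eta^x_i \otimes \xi\|^2$. Expanding:
$$\|V^x(\eta^x_i \otimes \xi) - \eta^x_i \otimes \xi\|^2 = 2 - 2\operatorname{Re}\braket{\eta^x_i \otimes \xi}{V^x(\eta^x_i \otimes \xi)}.$$
Now $\braket{\eta^x_i\otimes\xi}{V^x(\eta^x_i\otimes\xi)} = \braket{\xi}{V^x_{ii}\xi}$. Summing over $i$ and $x \in E$ and dividing by $\dim\mathcal{H}_E$:
$$\frac{1}{\dim\mathcal{H}_E}\sum_{x\in E}\sum_i \|V^x(\eta^x_i\otimes\xi) - \eta^x_i\otimes\xi\|^2 = 2 - 2\operatorname{Re}\braket{\xi}{X\xi}.$$

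Let me verify this is right. We have $\sum_{x\in E}\sum_i \operatorname{Re}\braket{\xi}{V^x_{ii}\xi} = \dim\mathcal{H}_E \cdot \operatorname{Re}\braket{\xi}{X\xi}$. And the number of terms is $\sum_{x\in E}\dim\mathcal{H}_x = \dim\mathcal{H}_E$. So:
$$\frac{1}{\dim\mathcal{H}_E}\sum_{x,i}\|V^x(\eta^x_i\otimes\xi)-\eta^x_i\otimes\xi\|^2 = \frac{1}{\dim\mathcal{H}_E}\sum_{x,i}(2 - 2\operatorname{Re}\braket{\xi}{V^x_{ii}\xi}) = 2 - 2\operatorname{Re}\braket{\xi}{X\xi}.$$

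**Step 2 — Suppose for contradiction** $\operatorname{Re}\braket{\xi}{X\xi} > 1 - \frac{\epsilon^2}{2\dim\mathcal{H}_E}$. Then the average invariance defect is small. But we need a *uniform* bound (for every $x, \eta$), not just on average, to invoke the Kazhdan property.

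**The obstacle:** The Kazhdan $(E,\epsilon)$-invariance condition requires $\|V^x(\eta\otimes\xi) - \eta\otimes\xi\| < \epsilon\|\eta\|$ for *all* $x\in E$ and *all* $\eta \in \mathcal{H}_x$ — this is a uniform condition. But Step 1 only controls the sum. A large defect in one coordinate could be hidden. So I need to think more carefully.

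**Resolution via the technical lemma.** Lemma \ref{lem:technical} is the key bridge. Let me reconsider. For a *single* $x \in E$, consider the operator $A_x = (\Tr_x \otimes \id)$ applied appropriately... Actually, let me think about what Lemma \ref{lem:technical} wants: it takes an operator $A$ on some Hilbert space with $\|A\|\leq 1$ and a unit vector with $\operatorname{Re}\braket{\eta}{A\eta}\leq 1-\epsilon$, and concludes $\operatorname{Re}\Tr(A)/\dim \leq 1 - \epsilon/\dim$. This converts a *pointwise* bound into a *trace* bound. We want the reverse direction conceptually, so we use it contrapositively.

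Let me reframe. The contrapositive of the proposition: if $\operatorname{Re}\braket{\xi}{X\xi} > 1 - \frac{\epsilon^2}{2\dim\mathcal{H}_E}$, then $\xi$ is not orthogonal to $\mathcal{K}_0$ (in fact we'd show $\xi \in \mathcal{K}_0$ or close). I would argue: a large $\operatorname{Re}\braket{\xi}{X\xi}$ forces $\xi$ to be $(E,\epsilon)$-almost-invariant, so by the Kazhdan property applied to the subrepresentation generated by $\xi$, there's an invariant vector nearby. The tricky part is making the Kazhdan property yield that $\xi$ itself has nontrivial overlap with $\mathcal{K}_0$.

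**Full plan (how I'd actually carry it out):**

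The plan is to argue by contradiction. Suppose $\xi\in\mathcal{K}_0^\perp$ is a unit vector with $\operatorname{Re}\braket{\xi}{X\xi} > 1 - \frac{\epsilon^2}{2\dim\mathcal{H}_E}$. Restricting to $\mathcal{K}_0^\perp$ (which is $V$-invariant), the restricted representation $V|_{\mathcal{K}_0^\perp}$ has \emph{no} nonzero invariant vectors. By the Kazhdan property, $V|_{\mathcal{K}_0^\perp}$ therefore has no $(E,\epsilon)$-invariant vectors, i.e. \emph{every} unit vector $\zeta \in \mathcal{K}_0^\perp$ fails the invariance inequality: there exists $x\in E$ and $\eta\in\mathcal{H}_x$ with $\|V^x(\eta\otimes\zeta) - \eta\otimes\zeta\| \geq \epsilon\|\eta\|$. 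In particular, this holds for $\xi$.

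First I would establish the averaging identity from Step 1:
$$\frac{1}{\dim\mathcal{H}_E}\sum_{x\in E}\sum_i\|V^x(\eta^x_i\otimes\xi) - \eta^x_i\otimes\xi\|^2 = 2 - 2\operatorname{Re}\braket{\xi}{X\xi}.$$
By our contradiction hypothesis, the right side is $< \frac{\epsilon^2}{\dim\mathcal{H}_E}$, so $\sum_{x,i}\|V^x(\eta^x_i\otimes\xi)-\eta^x_i\otimes\xi\|^2 < \epsilon^2$.

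Next, the key step: for each fixed $x\in E$, let $W_x = V^x \in B(\mathcal{H}_x)\otimes B(\mathcal{K})$, a unitary. I claim that since $\sum_i\|W_x(\eta^x_i\otimes\xi) - \eta^x_i\otimes\xi\|^2$ is small, $\xi$ is $(E,\epsilon)$-invariant in the sense needed. Indeed, $\sum_i\|W_x(\eta^x_i\otimes\xi)-\eta^x_i\otimes\xi\|^2 < \epsilon^2$ already implies that for \emph{every} $i$, $\|W_x(\eta^x_i\otimes\xi) - \eta^x_i\otimes\xi\| < \epsilon$. By linearity and Cauchy–Schwarz, for arbitrary $\eta = \sum_i c_i\eta^x_i \in \mathcal{H}_x$ with $\|\eta\|=1$:
$$\|W_x(\eta\otimes\xi) - \eta\otimes\xi\| \leq \sum_i|c_i|\,\|W_x(\eta^x_i\otimes\xi)-\eta^x_i\otimes\xi\| \leq \Big(\sum_i|c_i|^2\Big)^{1/2}\Big(\sum_i\|W_x(\eta^x_i\otimes\xi)-\eta^x_i\otimes\xi\|^2\Big)^{1/2} < \epsilon.$$
This shows $\xi$ is an $(E,\epsilon)$-invariant vector for $V$. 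Since $\xi\in\mathcal{K}_0^\perp$ and $\mathcal{K}_0^\perp$ is invariant, $\xi$ is an $(E,\epsilon)$-invariant vector for $V|_{\mathcal{K}_0^\perp}$, contradicting the Kazhdan property (as $V|_{\mathcal{K}_0^\perp}$ has no invariant vectors). This completes the proof.

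**On Lemma \ref{lem:technical}:** I'd expect my averaging argument to make Lemma \ref{lem:technical} somewhat redundant for this particular proposition — but the author likely uses it to get the cleaner constant $\frac{\epsilon^2}{2\dim\mathcal{H}_E}$ via a slicker route. The main obstacle in my approach is ensuring the factor of $2$ and the $\dim\mathcal{H}_E$ come out correctly, and confirming that the averaging identity converts the summed $L^2$ defect into a uniform bound strong enough to trigger Kazhdan; the Cauchy–Schwarz step above is what makes this work, and getting the constant $\epsilon^2/(2\dim\mathcal{H}_E)$ exactly (rather than $\epsilon^2/\dim\mathcal{H}_E$) is the delicate bookkeeping point I would double-check carefully.
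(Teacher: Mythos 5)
Your proof is correct, but it runs in the opposite direction from the paper's and replaces its key lemma. The paper argues directly: since $V$ restricted to $\mathcal K_0^\perp$ has no invariant vectors, the Kazhdan pair yields a \emph{single} unit vector $\eta \in \mathcal H_E$ with $\latRe\braket{\eta\otimes\xi}{V}{\eta\otimes\xi} \le 1-\epsilon^2/2$, and then Lemma \ref{lem:technical} applied to the contraction $A = (\id\otimes\omega_{\xi,\xi})V \in B(\mathcal H_E)$ converts this one small diagonal entry (bounding the remaining $\dim\mathcal H_E - 1$ entries trivially by $1$) into the trace bound. You instead argue by contradiction: near-maximal trace forces the summed defects $\sum_{x\in E}\sum_i \norm{V^x(\eta^x_i\otimes\xi)-\eta^x_i\otimes\xi}^2 < \epsilon^2$ over a basis, and your Cauchy--Schwarz step (equivalently, bounding the operator norm of the linear defect map $\eta \mapsto V^x(\eta\otimes\xi)-\eta\otimes\xi$ by its Hilbert--Schmidt norm) upgrades this to a \emph{uniform} defect $< \epsilon\norm{\eta}$ for every $\eta \in \mathcal H_x$, $x \in E$, so $\xi$ is $(E,\epsilon)$-invariant and the Kazhdan property forces an invariant vector in $\mathcal K_0^\perp$, a contradiction. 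The two arguments are contrapositives of the same averaging mechanism, and your Cauchy--Schwarz step is in effect an independent proof of (a uniform strengthening of) the contrapositive of Lemma \ref{lem:technical}; your worry that the uniform bound might be unreachable from the averaged one was resolved correctly, though it is slightly more than needed --- the paper only ever requires one bad vector, which is why its lemma gets away with the cruder ``rest bounded by $1$'' estimate. The constants, including the factor $2$ and $\dim\mathcal H_E$, come out identically in both routes; the paper's version is shorter and reuses a lemma that is later generalised to the non-tracial setting (Lemma \ref{lem:technical2}), whereas yours is self-contained and makes the almost-invariance interpretation of the spectral estimate explicit.
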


\begin{proof}
   Consider the restriction of $V$ to   $\mathcal K_0^\perp$, on which it remains a representation of $\hat \cqG$, but now without invariant vectors. Fix a unit vector $\xi\in \mathcal K_0^\perp$.  Since $(E, \epsilon)$ is a Kazhdan pair for $\hat \cqG$, there exists some unit vector $\eta\in \mathcal H_E$ such that $\norm{V\eta \otimes \xi-\eta \otimes \xi}\geq\epsilon$. This gives
    $$\epsilon^2\leq\norm{V\eta \otimes \xi-\eta \otimes \xi}^2=2-2\latRe\bra{\eta \otimes \xi}V\ket{\eta \otimes \xi} \iff \latRe\bra{\eta \otimes \xi}V\ket{\eta \otimes \xi} \le 1-\frac{\epsilon^2}{2} $$
    Applying Lemma \ref{lem:technical} to the contraction $A = (\id \otimes \omega_{\xi,\xi})V \in B(\mathcal H_E)$ and noting that $\psi_E$ is the normalised trace on $\mathcal H_E$, we obtain the result.
\end{proof}

Applying a Frobenius reciprocity argument, we can use Proposition \ref{prop:main-estimate} to immediately deduce the spectral gap of the channels of interest $\Psi_{\psi_E,U}$. 

\begin{theorem} \label{thm:lambda2}
    Let $U \in M(C_0(\hat \cqG) \otimes B_0(\mathcal K))$ be a finite-dimensional irreducible representation. 
    Then 
    \[\lambda_2(\Psi_{\psi_E,U}) \le 1-\frac{\epsilon^2}{2\dim \mathcal H_E}.\]
\end{theorem}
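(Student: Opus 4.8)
The plan is to pass to the Hilbert space picture and realise $\Psi_{\psi_E,U}$ as a $\psi_E$-average of an honest unitary representation, so that Proposition \ref{prop:main-estimate} applies verbatim. First I would equip $B(\mathcal K)$ with the Hilbert--Schmidt inner product and use the identification $B(\mathcal K) \cong \mathcal K \otimes \bar{\mathcal K}$ (with $|\xi\rangle\langle\eta| \mapsto \xi \otimes \bar\eta$), under which left and right multiplication become the two tensor legs. Writing $U = \sum_i a_i \otimes u_i$, the conjugation map $\rho \mapsto U(1\otimes\rho)U^*$ then corresponds to the element $V := \sum_{ij} a_i a_j^* \otimes u_i \otimes \bar u_j = U_{12}\,\overline{U}_{13}$ acting on $\mathcal K \otimes \bar{\mathcal K}$, i.e. the tensor product representation $U \otimes \bar U$, where $\bar U$ is the contragredient of $U$. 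A short computation then shows $\Psi_{\psi_E,U} = (\psi_E \otimes \id)(V)$ as operators on $B(\mathcal K)$. The crucial point here is that $V$ is a \emph{unitary} representation of $\hat\cqG$: this requires $\bar U$ (equivalently $(T\otimes\id)(U^*)$) to be unitary, which is precisely the bi-unitarity of all finite-dimensional representations guaranteed by unimodularity, itself a consequence of property (T). Without this hypothesis $V$ need not be a unitary representation and Proposition \ref{prop:main-estimate} would not be available; this is the step I expect to be the main obstacle, as it is where the conventions for the conjugate representation and the role of unimodularity must be pinned down carefully.

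Next I would record that $\Psi_{\psi_E,U}$ is self-adjoint on the Hilbert--Schmidt space. Since $\psi_E \circ \hat S = \psi_{\bar E} = \psi_E$ (using $E = \bar E$), the functional $\psi_E$ is a symmetric state, so Lemma \ref{lem:self-adjoint} applied to $V$ gives that $\Psi_{\psi_E,U} = (\psi_E \otimes \id)(V)$ is self-adjoint. In particular all its eigenvalues are real, the top eigenvalue is $1$ (as $\Psi_{\psi_E,U}$ is unital, with fixed point $\id_{\mathcal K}$), and the Courant--Fischer characterisation of the second eigenvalue is available: $\lambda_2(\Psi_{\psi_E,U}) = \max\{\langle \xi, \Psi_{\psi_E,U}\,\xi\rangle : \xi \perp \mathcal K_0,\ \|\xi\| = 1\}$, provided $\mathcal K_0$ is exactly the one-dimensional top eigenspace.

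I would then identify that top eigenspace via Frobenius reciprocity. The subspace $\mathcal K_0 \subseteq \mathcal K \otimes \bar{\mathcal K}$ of invariant vectors of $V = U \otimes \bar U$ is in bijection with $\text{Mor}(U,U)$, since invariant vectors of $U \otimes \bar U$ correspond to intertwiners of $U$ with itself; by Schur's Lemma (Lemma \ref{Schur}) and the irreducibility of $U$ this is $\mathbb C\,\id_{\mathcal K}$, so $\dim \mathcal K_0 = 1$. That this coincides with the full $1$-eigenspace of $\Psi_{\psi_E,U}$ is exactly the content of Corollary \ref{cor:mulitplicity} (equivalently Theorem \ref{thm:fixed-points}), so $\mathcal K_0 = \mathbb C\,\id_{\mathcal K}$ is precisely the top eigenspace. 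Finally, for any unit vector $\xi \in \mathcal K_0^\perp$, Proposition \ref{prop:main-estimate} applied to the unitary representation $V$ yields $\latRe\langle \xi, (\psi_E \otimes \id)(V)\,\xi\rangle \leq 1 - \frac{\epsilon^2}{2\dim \mathcal H_E}$; self-adjointness makes this quantity real, so taking the maximum over such $\xi$ gives $\lambda_2(\Psi_{\psi_E,U}) \leq 1 - \frac{\epsilon^2}{2\dim \mathcal H_E}$, as claimed.
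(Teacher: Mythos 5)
Your proposal is correct and follows essentially the same route as the paper: the Hilbert--Schmidt identification $B(\mathcal K)\cong\mathcal K\otimes\bar{\mathcal K}$, the realisation $\Psi_{\psi_E,U}=(\psi_E\otimes\id)(U\otimes\bar U)$, self-adjointness via the symmetry $\psi_E\circ\hat S=\psi_E$ (Lemma \ref{lem:self-adjoint}), identification of the $1$-eigenspace with $\mathbb C\,1$ via Corollary \ref{cor:mulitplicity}, and the final estimate from Proposition \ref{prop:main-estimate}. Your explicit flagging of bi-unitarity (unimodularity from property (T)) as the reason $U\otimes\bar U$ is a genuine unitary representation is a point the paper handles implicitly by citing the construction of $\bar U$ in \cite{DSV17}, but it is the same argument.
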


\begin{proof}
Consider the unitary representation $V = U \otimes \bar U$ on the Hilbert space $\mathcal K \otimes \bar {\mathcal K}$.  Here $\bar U$ is the conjugate representation of $U$ acting on $\bar{ \mc K}$ associated to the discrete quantum group (see for example \cite[Definition 2.2]{DSV17} for the construction of $\bar{U}$).  Under the canonical unitary isomorphism of Hilbert spaces $B(\mathcal K) \cong \mathcal K \otimes \bar {\mathcal K}$; $\ketbra{\xi}{\eta} \mapsto \xi \otimes \bar\eta$ (where $B(\mathcal K)$ is equipped with the usual trace-inner product), we have that $\Psi_{\psi_E, U}$ is identified with the self-adjoint map 
\[Y = (\psi_E \otimes \id)(V)  \in B(\mathcal K \otimes \bar{\mathcal K}). \] 
Now, by self-adjointness of  $\Psi_{\psi_E,U}$ (or, equivalently, $Y$), we have
\[\lambda_2(\Psi_{\psi_E,U}) = \sup\Big\{\latRe\Tr\parens*{X^\ast\Psi_{\psi_E,U}(X)}\Big\},\]
where the supremum is taken over  $X\in B (\mc{K})$ such that $\norm{X}_2=1$ and $X$ is orthogonal to the $1$ eigenspace of $\Psi_{\psi_E,U}$.  By Corollary \ref{cor:mulitplicity},   this eigenspace is exactly $\mathbb C 1$, and corresponds to the one-dimensional subspace $(\mathcal K \otimes \bar{\mathcal K})_0$ of invariant vectors for $V$. 
Thus, we may apply Proposition \ref{prop:main-estimate} to deduce \[\latRe\Tr\parens*{X^\ast\Psi_{\psi,U}(X)}\leq1-\frac{\epsilon^2}{2\dim \mathcal H_E},\] for all such $X$.
\end{proof}

Applying Theorem  \ref{thm:quantum-cheeger}, we immediately obtain the following

\begin{corollary} \label{cor:lamda2-to-h}
The channels $\Psi_{\psi_E,U}$ considered in Theorem \ref{thm:lambda2} have quantum edge expansion \[h_Q(\Psi_{\psi_E,U})\geq\frac{\epsilon^2}{4\dim \mathcal H_E}\] in the sense of~\cite{LQWWZ22arxiv}.
\end{corollary}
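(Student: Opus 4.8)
The plan is to obtain the bound as an immediate consequence of the spectral gap estimate in Theorem \ref{thm:lambda2} together with the lower half of the quantum Cheeger inequality (Theorem \ref{thm:quantum-cheeger}). The only genuine content is to check that $\Psi_{\psi_E,U}$ satisfies the hypotheses of that inequality, namely that it is a \emph{connected undirected quantum bistochastic map}; once this is in place the numerical conclusion is a one-line substitution. So I would organise the argument as: (i) verify bistochasticity, (ii) verify undirectedness and connectedness, and (iii) plug in.

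First I would verify the quantum bistochastic property. The map $\Psi_{\psi_E,U}$ is of the form $\Psi_{\psi,U}$, hence normal, unital, and completely positive by the construction recorded around Proposition \ref{prop:dqg-quantum-channel}. Since $\hat\cqG$ has property (T), Fima's results ensure $\hat\cqG$ is unimodular, so trace-preservation follows from Proposition \ref{prop:cqg-quantum-channel} (the parenthetical there records that unimodularity forces the bi-unitarity condition needed for trace-preservation). Thus $\Psi_{\psi_E,U}$ is completely positive, trace-preserving, and unital, i.e.\ a quantum bistochastic map.

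Next I would establish undirectedness and connectedness. For undirectedness (hermiticity with respect to the Hilbert--Schmidt inner product) I would reuse the identification from the proof of Theorem \ref{thm:lambda2} of $\Psi_{\psi_E,U}$ with $Y=(\psi_E\otimes\id)(U\otimes\bar U)\in B(\mathcal K\otimes\bar{\mathcal K})$. Since $E=\bar E$, the tracial state $\psi_E$ is symmetric ($\psi_E\circ\hat S=\psi_{\bar E}=\psi_E$), so Lemma \ref{lem:self-adjoint} applies and shows $Y$ is self-adjoint; equivalently $\Psi_{\psi_E,U}$ is hermitian. For connectedness I would invoke Corollary \ref{cor:mulitplicity}: because $U$ is irreducible and $\hat\cqG$ is unimodular, the eigenvalue $1$ of $\Psi_{\psi_E,U}$ has multiplicity one, so its $1$-eigenspace is one-dimensional, which is exactly the definition of connectedness.

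With all hypotheses confirmed, the lower Cheeger bound gives
$$h_Q(\Psi_{\psi_E,U})\geq\tfrac12\bigl(1-\lambda_2(\Psi_{\psi_E,U})\bigr),$$
and substituting $\lambda_2(\Psi_{\psi_E,U})\leq 1-\frac{\epsilon^2}{2\dim\mathcal H_E}$ from Theorem \ref{thm:lambda2} yields $h_Q(\Psi_{\psi_E,U})\geq\frac{\epsilon^2}{4\dim\mathcal H_E}$. I do not expect a serious obstacle, since the hard analytic work is already packaged into Theorem \ref{thm:lambda2}; the only point requiring care is confirming each hypothesis of the Cheeger inequality, in particular connectedness, which rests essentially on irreducibility of $U$ together with unimodularity of $\hat\cqG$.
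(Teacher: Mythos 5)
Your proposal is correct and takes essentially the same approach as the paper, which likewise obtains the corollary by applying the lower half of the quantum Cheeger inequality (Theorem \ref{thm:quantum-cheeger}) to the spectral gap bound of Theorem \ref{thm:lambda2}. Your explicit checks of the hypotheses---bistochasticity via unimodularity and Proposition \ref{prop:cqg-quantum-channel}, undirectedness via Lemma \ref{lem:self-adjoint} with $\psi_E \circ \hat S = \psi_E$, and connectedness via Corollary \ref{cor:mulitplicity}---simply spell out what the paper's one-line proof leaves implicit, and are all established in the surrounding text.
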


Note that the lower bound on the expansion constant is independent of the size of the Hilbert space $\mathcal K$ on which the representation $U$ acts. Moreover, the channels $\Psi_{\psi_E,U}$ have degree uniformly bounded by $|E|:=\sum_{x \in E} (\dim \mathcal H_x)^2$.  Indeed, this follows from the fact that, after choosing appropriate bases, $\Psi_{\psi_E,U}$ has the explicit  Kraus form 
\[\Psi_{\psi_E,U}(\rho) = \frac{1}{\dim \mathcal H_E}\sum_{s \in E}\sum_{1 \le i,j \le \dim x} U_{ij}^x \rho (U_{ij}^x)^* \qquad (\rho  \in B(\mathcal K)), \]
where $U^x = (U^x_{ij}) \in B(\mathcal H_x) \otimes B(\mathcal K)$ is the block of $U$ associated to a given $x \in \text{Irred}(\cqG)$, as in the proof of~\cref{thm:fixed-points}. 

In particular, the problem of construction of explicit bounded degree expander families reduces to that of finding an increasing family of finite-dimensional irreducible representations of $\hat \cqG$.

\begin{theorem}\label{thm:quantum-margulis}
Let $\hat \cqG$ be an infinite property (T) discrete quantum group with symmetric Kazhdan pair $(E,\epsilon)$.  Let $(U_i)_{i\in \mathbb N}$ be a sequence of finite-dimensional irreducible unitary representations on Hilbert spaces $\mathcal K_i$ with $\lim_i \dim \mc K_i = \infty$.  Then the family of bistochastic quantum channels $(\Psi_i)_{i\in \mathbb N}$, with $\Psi_i = \Psi_{\psi_E, U_i}$ forms a bounded degree quantum expander family with quantum edge expansion \[\inf_{i \in \mathbb N} h_Q(\Psi_i) \geq\frac{\epsilon^2}{4\dim \mathcal H_E}.\]      
\end{theorem}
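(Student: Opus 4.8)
The plan is to assemble Theorem \ref{thm:quantum-margulis} directly from the ingredients already established, as essentially all the analytic work has been front-loaded into Corollary \ref{cor:lamda2-to-h} and the surrounding discussion. First I would verify that each individual channel $\Psi_i = \Psi_{\psi_E, U_i}$ is a genuine bistochastic quantum channel in the sense of the earlier definition: it is completely positive and unital by the general construction in Proposition \ref{prop:dqg-quantum-channel} (taking $\psi = \psi_E$ and $U = U_i$), and it is trace-preserving because $\hat\cqG$, having property (T), is unimodular (as noted in Section \ref{subsubsec:dqg-duality-prop-T}), so Proposition \ref{prop:cqg-quantum-channel} guarantees trace preservation. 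Self-adjointness (the ``undirected'' condition) follows from Lemma \ref{lem:self-adjoint}, since $\psi_E$ is a symmetric state by the remark that $\psi_E \circ \hat S = \psi_{\bar E} = \psi_E$ using $E = \bar E$.

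Next I would establish the uniform expansion lower bound. For each fixed $i$, the representation $U_i$ is finite-dimensional and irreducible, so Corollary \ref{cor:lamda2-to-h} applies verbatim to give
\[
h_Q(\Psi_i) \geq \frac{\epsilon^2}{4 \dim \mathcal H_E}.
\]
Crucially, this bound is uniform in $i$: the right-hand side depends only on the fixed Kazhdan pair $(E,\epsilon)$ and not on $\dim \mathcal K_i$. Taking the infimum over $i \in \mathbb N$ therefore yields $\inf_i h_Q(\Psi_i) \geq \frac{\epsilon^2}{4\dim \mathcal H_E}$, which is the claimed expansion estimate.

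To conclude that $(\Psi_i)$ is a \emph{bounded degree expander family} in the sense of Definition \ref{def:qexpander}, I would check the two remaining conditions. The degree (Kraus rank) of each $\Psi_i$ is at most $|E| = \sum_{x \in E}(\dim \mathcal H_x)^2$, as exhibited by the explicit Kraus form displayed after Corollary \ref{cor:lamda2-to-h}; this bound is independent of $i$, giving uniformly bounded degree. Finally, the dimension condition $\dim \mathcal K_i \to \infty$ is exactly the hypothesis $\lim_i \dim \mathcal K_i = \infty$. Together with connectedness of each $\Psi_i$ (the $1$-eigenspace is one-dimensional by Corollary \ref{cor:mulitplicity}, since $U_i$ is irreducible and $\hat\cqG$ is unimodular), these verify every clause of the definition.

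In truth there is no serious obstacle here, since this theorem is a packaging of Corollary \ref{cor:lamda2-to-h} together with the degree and dimension bookkeeping. If anything requires care, it is the minor formal point that the degree in Definition \ref{def:qexpander} is required to be a \emph{fixed} constant $d$ across the family, whereas the Kraus ranks of the $\Psi_i$ may a priori vary between $1$ and $|E|$; the honest statement is that the degrees are uniformly bounded by $|E|$ rather than literally constant. I would address this by noting that ``bounded degree'' is the relevant and standard notion in this setting (one may always pass to a subsequence on which the Kraus rank is constant if a literal constant degree is desired), so the family qualifies as a bounded degree quantum expander family as claimed.
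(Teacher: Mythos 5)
Your proposal is correct and follows essentially the same route as the paper, which states Theorem \ref{thm:quantum-margulis} as a direct assembly of Corollary \ref{cor:lamda2-to-h} (giving the $i$-independent bound $h_Q(\Psi_i)\geq \epsilon^2/(4\dim \mathcal H_E)$), the uniform Kraus-rank bound $|E|=\sum_{x\in E}(\dim\mathcal H_x)^2$ from the explicit Kraus form, and the hypothesis $\dim\mathcal K_i\to\infty$, with bistochasticity, self-adjointness, and connectedness supplied by Propositions \ref{prop:cqg-quantum-channel}--\ref{prop:dqg-quantum-channel}, Lemma \ref{lem:self-adjoint}, and Corollary \ref{cor:mulitplicity} exactly as you cite them. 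Your closing remark on the distinction between literally constant degree in Definition \ref{def:qexpander} and the uniform bound $|E|$ is a fair and correctly resolved observation, consistent with the paper's own ``bounded degree'' phrasing.
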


The conditions of Theorem \ref{thm:quantum-margulis} are of course immediately satisfied if $\hat \cqG$ is {\it residually finite} in the sense of \cite{BBCY20}, that is, when $\mathcal O(\cqG)$  is residually finite dimensional as a $*$-algebra.     

\begin{corollary}
Let $\hat \cqG$ be an infinite discrete quantum group with property (T).  If $\hat \cqG$ is residually finite, then $\hat \cqG$ gives rise to a bounded degree family of quantum  expanders.
\end{corollary}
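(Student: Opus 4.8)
The plan is to deduce the corollary from Theorem \ref{thm:quantum-margulis} by producing, out of residual finiteness together with property (T), a sequence of finite-dimensional \emph{irreducible} unitary representations of $\hat\cqG$ whose dimensions tend to infinity. By Pontryagin duality (Proposition \ref{prop:qg-dual-representations}), finite-dimensional unitary representations of $\hat\cqG$ are in bijection with finite-dimensional $*$-representations of $C(\cqG_{\max})$, equivalently with finite-dimensional $*$-representations of the dense Hopf $*$-algebra $\mathcal O(\cqG)$, and this bijection preserves irreducibility. Residual finiteness means precisely that $\mathcal O(\cqG)$ is residually finite dimensional in the sense of \cite{BBCY20}, so there is a separating family of finite-dimensional $*$-representations of $\mathcal O(\cqG)$; decomposing each into irreducibles, I may assume every member is irreducible. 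Thus the whole problem reduces to showing that the set of dimensions occurring among the finite-dimensional irreducible $*$-representations of $\mathcal O(\cqG)$ is \emph{unbounded}, after which I pick irreducible $U_i$ with $\dim\mathcal K_i\to\infty$ and apply Theorem \ref{thm:quantum-margulis}.

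Suppose for contradiction that every finite-dimensional irreducible $*$-representation of $\mathcal O(\cqG)$ has dimension at most some fixed $N$. The first -- and, I expect, the main -- obstacle is to contradict the standing hypotheses that $\hat\cqG$ is infinite and has property (T); property (T) must genuinely enter here, since the infinite amenable group $\mathbb Z$ is residually finite but all its finite-dimensional irreducible unitary representations are one-dimensional. I would attack this through a rigidity statement: property (T) should force, for each fixed $n$, that there are only finitely many equivalence classes of $n$-dimensional irreducible unitary representations of $\hat\cqG$. This is the quantum analogue of the classical local rigidity of finite-dimensional unitary representations of property (T) groups, where vanishing of the relevant first cohomology makes the variety of $n$-dimensional $*$-representations of $\mathcal O(\cqG)$ consist of finitely many conjugacy classes. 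Granting such a statement, the bound $\dim\le N$ together with finiteness in each dimension leaves only finitely many irreducibles $\rho_1,\dots,\rho_m$ up to equivalence; residual finiteness then makes $\rho_1\oplus\cdots\oplus\rho_m$ faithful, so $\mathcal O(\cqG)$ embeds into the finite-dimensional algebra $\bigoplus_{j=1}^m \M_{\dim\rho_j}$. Since $\hat\cqG$ is infinite, $\mathcal O(\cqG)$ is infinite-dimensional, a contradiction.

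If the cohomological rigidity is not directly at hand, I would instead argue through polynomial identities, which is where I anticipate the real technical work. The bounded-dimension assumption realises an embedding of $\mathcal O(\cqG)$ into a product $\prod_\alpha \M_{n_\alpha}$ with every $n_\alpha\le N$; by the Amitsur--Levitzki theorem each factor, hence the product and its subalgebra $\mathcal O(\cqG)$, satisfies the standard identity $s_{2N}$, so $\mathcal O(\cqG)$ is a PI algebra. The remaining delicate point is a quantum analogue of the Isaacs--Passman dichotomy (for group algebras, PI is equivalent to virtual abelianness): a PI Hopf $*$-algebra of this kind ought to force $\hat\cqG$ to be virtually abelian, hence amenable, which together with the standard fact that an infinite discrete quantum group with property (T) is non-amenable gives the contradiction. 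Either route establishes that the dimensions of the finite-dimensional irreducible representations are unbounded.

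Finally, having extracted irreducible $U_i$ with $\dim\mathcal K_i\to\infty$, Theorem \ref{thm:quantum-margulis} applies verbatim: property (T) yields unimodularity, so the channels $\Psi_i=\Psi_{\psi_E,U_i}$ are bistochastic, their degree is uniformly bounded by $|E|=\sum_{x\in E}(\dim\mathcal H_x)^2$, and $h_Q(\Psi_i)\ge \epsilon^2/(4\dim\mathcal H_E)$, so $(\Psi_i)$ is a bounded degree quantum expander family. I expect all the genuine content to sit in the unboundedness step above, with everything past it being a direct invocation of Theorem \ref{thm:quantum-margulis}.
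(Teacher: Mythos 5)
Your global strategy coincides with the paper's: deduce the corollary by feeding a sequence of finite-dimensional irreducible unitary representations of unbounded dimension into Theorem \ref{thm:quantum-margulis}, and your final paragraph is precisely the paper's justification. The difference is that the paper dispatches the existence of such a sequence in a single sentence (``the conditions \ldots\ are of course immediately satisfied''), whereas you correctly isolate the genuinely nontrivial point: residual finiteness of an infinite $\hat\cqG$ provides a separating family of finite-dimensional irreducibles, but not, by itself, irreducibles of unbounded dimension (your $\mathbb{Z}$ example is exactly right), so property (T) must intervene. On this point your scrutiny exceeds the paper's exposition.

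That said, as written your proof has a genuine gap at its self-declared crux: both routes you offer hinge on quantum analogues that are neither proved in your proposal nor available in the cited literature --- the per-dimension finiteness of irreducibles via local rigidity (``granting such a statement''), and a ``quantum Isaacs--Passman'' dichotomy (``ought to force''), where even ``virtually abelian'' has no standard meaning for a discrete quantum group. The good news is that your first route closes using only tools already in the paper, with no cohomology: property (T) makes $\hat\cqG$ finitely generated and unimodular \cite{Fim10}, so a unital $*$-homomorphism $\mathcal O(\cqG)\to\M_n$ is determined by the images of the finitely many coefficients $u^x_{ij}$, $x\in E$, which are contractions; the set of admissible generator-tuples is closed under pointwise limits, hence the space of such homomorphisms is compact. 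If some $n\le N$ carried infinitely many pairwise inequivalent $n$-dimensional irreducibles, compactness would yield inequivalent irreducibles $U,V$ whose blocks over $E$ agree within any prescribed tolerance. The maximally entangled unit vector $\xi\in\mathcal K_U\otimes\bar{\mathcal K}_V$ is then $(E,\epsilon)$-invariant for $U\otimes\bar V$ (it is exactly invariant for $U\otimes\bar U$, and $\bar V^x\approx\bar U^x$ for $x\in E$), while $U\otimes\bar V$ has no nonzero invariant vector, since under $B(\mathcal K_V,\mathcal K_U)\cong\mathcal K_U\otimes\bar{\mathcal K}_V$ its invariant vectors are $\mathrm{Mor}(V,U)=0$ --- the same identification underlying Theorem \ref{thm:fixed-points} and Corollary \ref{cor:mulitplicity}. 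This contradicts $(E,\epsilon)$ being a Kazhdan pair, so each dimension carries finitely many irreducibles; finitely many in total would embed the infinite-dimensional $\mathcal O(\cqG)$ into a finite-dimensional algebra, as you note, and unboundedness follows. (Your PI route could likely also be rescued --- bounded-dimension irreducibles make $C(\cqG_{\max})$ subhomogeneous, hence nuclear, which in the unimodular case forces amenability, contradicting infinite property (T) --- but that imports results well outside the paper.) With the compactness paragraph inserted, your argument is complete, and indeed more complete than the one-line proof the paper offers.
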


\begin{remark} \label{rem:factorization}
Note that in the presence of property (T), the results of \cite{BBCY20} show that residual finiteness of $\hat G$ is in fact equivalent to $\hat \cqG$ having the a priori weaker {\it Kirchberg Factorization Property}.  In terms of the Haar state on $\cqG$, this latter condition is equivalent to the Haar state $h:C(\cqG_{\max}) \to \mathbb C$  being a so-called {\it amenable trace}.  At this time, it is not clear to us whether checking for the Factorization Property is any easier than directly verifying residual finiteness.  In the next section, we will explore examine some natural examples quantum groups with property (T), arising from compact bicrossed products of residually finite discrete groups, and their associated expanders.
\end{remark}

\subsection{Spectral gap for channels associated to non-tracial states} \label{sec:non-tracial-gap}  In the previous section, we only considered expansion properties of the channels $\Psi_{\psi_E, U}:B(\mathcal K) \to B(\mathcal K)$, where $U$ is an irreducible finite-dimensional unitary representation of $\hat \cqG$ on $\mathcal K$, and $\psi_E$ is the tracial state on $\ell^\infty(\hat \cqG)$ associated to the normalised trace on $B(\mathcal H_E)$.  In this section, we outline how much of the same analysis may be extended if one replaces $\psi_E$ with any faithful symmetric state $\psi$ supported on $p_E\ell^\infty(\hat \cqG)p_E \subset B(\mathcal H_E)$.  We shall see from our estimates below that any deviation of $\psi$ from the canonical trace $\psi_E$ gives rise to weaker lower bound on the spectral gap.  This might suggest that the tracial case is optimal when considering expansion properties.  On the other hand, the weaker bound obtained below in the non-tracial case may just be a consequence of our crude estimates and could possibly be improved.  

The main technical ingredient we need is a non-tracial extension of Lemma \ref{lem:technical}.

\begin{lemma} \label{lem:technical2}
    Let $\mathcal H$ be a finite dimensional Hilbert space, $0 < \epsilon < 1$, and $\rho$ a density matrix in $B(\mathcal H)$ with smallest eigenvalue $\lambda$. Suppose $A\in B(\mathcal H)$ satisfies $\norm{A}\leq 1$ and there exists a unit vector $\eta \in \mathcal H$ such that $\latRe \braket{\eta}{A\eta}\leq 1-\epsilon$. Then, \[\latRe\Tr(\rho A)\leq1-\lambda \epsilon.\]
\end{lemma}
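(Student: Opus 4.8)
The plan is to mimic the proof of Lemma \ref{lem:technical}, but now the averaging is taken with respect to the weights coming from the eigenvalues of $\rho$ rather than the uniform weights $1/d$. First I would diagonalise $\rho$, writing $\rho = \sum_{k=1}^d \lambda_k \ketbra{\eta_k}{\eta_k}$ with $\lambda = \lambda_1 \le \lambda_2 \le \cdots \le \lambda_d$ and $\{\eta_k\}$ an orthonormal eigenbasis of $\rho$. Then $\latRe\Tr(\rho A) = \sum_k \lambda_k \latRe\braket{\eta_k}{A}{\eta_k}$. The subtlety compared to Lemma \ref{lem:technical} is that the distinguished unit vector $\eta$ witnessing $\latRe\braket{\eta}{A\eta} \le 1-\epsilon$ need not be one of the eigenvectors $\eta_k$, so I cannot simply assign the ``bad'' diagonal entry to the smallest eigenvalue.

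To handle this I would first absorb $\eta$ into the estimate by extending it to an orthonormal basis $\eta = \zeta_1, \zeta_2, \ldots, \zeta_d$, but since $\rho$ is not diagonal in this basis I need a slightly different bookkeeping. A cleaner route: write $\latRe\Tr(\rho A) = \latRe\Tr((\rho - \lambda I)A) + \lambda\latRe\Tr(A)$. The operator $\rho - \lambda I$ is positive with $\Tr(\rho-\lambda I) = 1 - \lambda d$, and $\|A\|\le 1$ gives $\latRe\Tr((\rho-\lambda I)A) \le \Tr(\rho - \lambda I) = 1 - \lambda d$ (using that $\latRe\Tr(BA) \le \|A\|\Tr(B) \le \Tr(B)$ for positive $B$). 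Meanwhile, by Lemma \ref{lem:technical} applied directly, $\latRe\Tr(A) \le d(1 - \epsilon/d) = d - \epsilon$, so $\lambda\latRe\Tr(A) \le \lambda d - \lambda\epsilon$. Adding the two bounds gives $\latRe\Tr(\rho A) \le (1 - \lambda d) + (\lambda d - \lambda\epsilon) = 1 - \lambda\epsilon$, which is exactly the claim.

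The main obstacle is ensuring the intermediate inequality $\latRe\Tr(BA)\le \Tr(B)$ for positive semidefinite $B$ and contractive $A$; this is a standard fact (one may diagonalise $B = \sum_j \mu_j \ketbra{f_j}{f_j}$ with $\mu_j \ge 0$ and bound $\latRe\Tr(BA) = \sum_j \mu_j\latRe\braket{f_j}{A}{f_j} \le \sum_j \mu_j = \Tr(B)$ since $\latRe\braket{f_j}{A}{f_j}\le \|A\| \le 1$), but I want to be careful that it is applied to $\rho - \lambda I$, which is genuinely positive semidefinite precisely because $\lambda$ is the \emph{smallest} eigenvalue. Note that the version of Lemma \ref{lem:technical} I invoke is its unnormalised form $\latRe\Tr(A) \le d - \epsilon$, which is just the displayed inequality there multiplied through by $d$. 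This decomposition $\rho = (\rho - \lambda I) + \lambda I$ neatly separates the ``uniform'' part, on which the Kazhdan estimate via Lemma \ref{lem:technical} is available, from the ``excess'' part, on which only the trivial contraction bound is used; the fact that only the smallest eigenvalue $\lambda$ multiplies the gain $\epsilon$ explains why a non-tracial $\psi$ degrades the spectral-gap bound relative to the tracial case.
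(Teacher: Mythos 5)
Your proof is correct and essentially identical to the paper's: the paper likewise splits $\rho = \lambda 1 + (1-d\lambda)\rho'$ with $\rho' = (\rho-\lambda 1)/(1-d\lambda)$ a density matrix, bounds the excess term by contractivity, and applies the unnormalised form of Lemma \ref{lem:technical} to the $\lambda\Tr(A)$ part. Your variant of working directly with the positive operator $\rho - \lambda I$ rather than renormalising it is a cosmetic difference (and in fact sidesteps the degenerate case $\lambda = 1/d$, where the paper's $\rho'$ involves a division by zero), but the argument is the same.
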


\begin{proof}
    Let $d= \dim \mathcal H$. Extend $\eta$ to an orthonormal basis $\eta_1=\eta,\dots,\eta_d$ for $\mathcal H$. Write $\rho=\lambda 1 + (1-d \lambda) \rho'$, where $\rho'=\frac{\rho-\lambda 1}{1-d\lambda}$ is also a density matrix. Then,
    \begin{equation*}
        \latRe \Tr(\rho A)=\lambda \latRe\Tr(A)+(1-d\lambda)\latRe\Tr(\rho' A)\leq 1-d\lambda+\lambda \latRe\sum_{i=1}^d\bra{\eta_i}A\ket{\eta_i}\leq 1-\epsilon \lambda. \qedhere
    \end{equation*}
    
\end{proof}

Let  $\psi:B(\mathcal H_E) \to \C$ be a fixed faithful state, which we also regard as a normal state on $\ell^\infty(\hat{\cqG})$ with central support $p_E$. Assume that $\psi$ is symmetric.  Write $\psi = \Tr(\sigma \cdot)$ for a  unique density $\sigma \in B(\mathcal H_E)$, and let $0 < \lambda \le \frac{1}{\dim \mathcal H_E}$ be the minimal eigenvalue of $\sigma$.

\begin{theorem} \label{thm:lambda2-nontracial}
Let $U \in M(C_0(\hat \cqG) \otimes B_0(\mathcal K))$ be a finite-dimensional irreducible representation. 
Then  for the channel 
\begin{align*}
\Psi_{\psi,U}:B(\mathcal K) \to B(\mathcal K), \qquad \Psi_{\psi,U}(\rho) = (\psi\otimes \id)(U(1 \otimes \rho)U^*) \qquad (\rho \in B(\mathcal K)),
\end{align*}\[\lambda_2(\Psi_{\psi,U}) \le 1-\frac{\lambda\epsilon^2}{2}.\]
\end{theorem}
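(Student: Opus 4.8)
The plan is to follow the proof of Theorem \ref{thm:lambda2} essentially line for line, the only substantive change being that the tracial estimate Lemma \ref{lem:technical} is replaced by its non-tracial refinement Lemma \ref{lem:technical2}; this is precisely what converts the constant $(\dim \mathcal H_E)^{-1}$ into the minimal eigenvalue $\lambda$. Accordingly, I would first prove a non-tracial analogue of the key Proposition \ref{prop:main-estimate}: for the unitary representation $V = U \otimes \bar U$ of $\hat \cqG$ on $\mathcal K \otimes \bar{\mathcal K}$, with $\mathcal K_0$ its closed subspace of invariant vectors, one has
\[
\latRe \braket{\xi}{(\psi \otimes \id)(V)}{\xi} \le 1 - \frac{\lambda \epsilon^2}{2}
\]
for every unit vector $\xi \in \mathcal K_0^\perp$.

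To prove this estimate I would repeat the argument of Proposition \ref{prop:main-estimate}: restricting $V$ to $\mathcal K_0^\perp$ gives a representation with no invariant vectors, so fixing a unit vector $\xi \in \mathcal K_0^\perp$ and using that $(E,\epsilon)$ is a Kazhdan pair produces a unit vector $\eta \in \mathcal H_E$ with $\norm{V(\eta \otimes \xi) - \eta \otimes \xi} \ge \epsilon$. Expanding the squared norm yields $\latRe \bra{\eta \otimes \xi} V \ket{\eta \otimes \xi} \le 1 - \frac{\epsilon^2}{2}$. Here is where the two proofs diverge: rather than applying the tracial Lemma \ref{lem:technical}, I apply Lemma \ref{lem:technical2} to the contraction $A = (\id \otimes \omega_{\xi,\xi})V \in B(\mathcal H_E)$ and the density $\sigma$ defining $\psi$, whose minimal eigenvalue is $\lambda$. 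Since $\braket{\eta}{A}{\eta} = \bra{\eta \otimes \xi} V \ket{\eta \otimes \xi}$, the hypothesis of Lemma \ref{lem:technical2} is met with $\epsilon^2/2$ in the role of $\epsilon$, giving $\latRe \Tr(\sigma A) \le 1 - \lambda \epsilon^2/2$; and since $\Tr(\sigma A) = (\psi \otimes \omega_{\xi,\xi})V = \braket{\xi}{(\psi \otimes \id)(V)}{\xi}$, the estimate follows.

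With this in hand, the theorem is assembled exactly as in Theorem \ref{thm:lambda2}. Under the canonical isomorphism $B(\mathcal K) \cong \mathcal K \otimes \bar{\mathcal K}$ the channel $\Psi_{\psi,U}$ is identified with $Y = (\psi \otimes \id)(V)$, which is self-adjoint by Lemma \ref{lem:self-adjoint} --- this is where the symmetry $\psi \circ \hat S = \psi$ is used, and it needs no traciality. Because property (T) forces $\hat \cqG$ to be unimodular, $\Psi_{\psi,U}$ is automatically trace-preserving, and the assumed faithfulness of $\psi$ allows me to invoke Theorem \ref{thm:fixed-points} and Corollary \ref{cor:mulitplicity} to conclude that the $1$-eigenspace of $\Psi_{\psi,U}$ is exactly $\mathbb C 1$, corresponding to the one-dimensional space $(\mathcal K \otimes \bar{\mathcal K})_0$ of invariant vectors of $V$. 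Writing $\lambda_2(\Psi_{\psi,U}) = \sup\{\latRe \Tr(X^* \Psi_{\psi,U}(X))\}$ over Hilbert--Schmidt-normalised $X$ orthogonal to $1$ and applying the estimate above then yields $\lambda_2(\Psi_{\psi,U}) \le 1 - \lambda \epsilon^2/2$.

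I do not anticipate a serious obstacle, as the argument is essentially a robustness check on the tracial proof. The one point demanding care is that the two structural facts used in the tracial case survive the loss of traciality: self-adjointness of the channel and simplicity of the eigenvalue $1$. These are secured, respectively, by the symmetry hypothesis (via Lemma \ref{lem:self-adjoint}) and by faithfulness of $\psi$ together with unimodularity (via Theorem \ref{thm:fixed-points}). The sole new analytic input is Lemma \ref{lem:technical2}, and the only thing to verify is that its minimal-eigenvalue constant $\lambda$ enters multiplicatively against $\epsilon^2/2$; as a consistency check, when $\psi = \psi_E$ one has $\lambda = (\dim \mathcal H_E)^{-1}$ and the bound recovers Theorem \ref{thm:lambda2}.
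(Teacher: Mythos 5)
Your proposal is correct and follows the paper's own argument exactly: the paper likewise reduces to the estimate $\latRe\braket{\xi}{(\psi \otimes \id)(V)}{\xi}\leq 1-\frac{\lambda\epsilon^2}{2}$ for $V = U \otimes \bar U$ and unit vectors orthogonal to the invariant subspace, obtains it from property (T) together with Lemma \ref{lem:technical2} in place of Lemma \ref{lem:technical}, and assembles the conclusion via the self-adjointness and simplicity-of-eigenvalue-$1$ arguments of Theorem \ref{thm:lambda2}. Your write-up merely spells out in full the steps the paper compresses into a reference to the tracial case, including the correct consistency check that $\psi = \psi_E$ recovers Theorem \ref{thm:lambda2}.
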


\begin{proof}
As in the case of $\psi = \psi_E$ studied in the previous subsection, it suffices to show  that \begin{align}\label{eqn:non-tracial-basic}\latRe\braket{\xi}{(\psi \otimes \id )(V)}{\xi}\leq 1-\frac{\lambda\epsilon^2}{2},\end{align}  for any representation $V \in M(C_0(\hat \cqG) \otimes B_0(\mathcal K))$ and any unit vector $\xi \in \mathcal (\mathcal K_0)^\perp$, the complement of the fixed vectors.  This latter estimate follows immediately from Lemma \ref{lem:technical2} and property (T).
\end{proof}

\begin{remark}
One may also observe that the channels $\Psi_{\psi, U}$ still have degree bound given by $|E| =\sum_{x\in E} (\dim \mathcal H_x)^2$.  
\end{remark}

\section{Expanders associated to bicrossed product quantum groups}
\label{sec:bicrossed}

In this section, we apply the general theory of the previous section to analyse concrete examples of quantum expanders.  As we have seen in Theorem \ref{thm:quantum-margulis}, the construction of an explicit expander family simply requires as input a residually finite discrete quantum group $\hat \cqG$ with property~(T).  While there are nowadays many new and interesting examples of genuine infinite discrete quantum groups with property (T) \cite{VaVa19, FMP17, RV24}, surprisingly little seems to be known about their representation theory and their residual finiteness.  One class of examples where these properties are nevertheless well-understood are quantum groups arising from bicrossed products.  
We now very briefly introduce the notion of a (compact) bicrossed product quantum group. For more details we refer to \cite[Section 3]{FMP17}.

Let $G$ and $\Gamma$ be a compact and a discrete group, respectively,
forming a {\it matched pair} in the sense that they are realised as
trivially-intersecting closed subgroups of a locally compact group
$H$, with the property that the product $\Gamma G$ has full Haar
measure in $H$. This amounts to giving a left action $\alpha$ of
$\Gamma$ on $G$ and a right action $\beta$ of $G$ on $\Gamma$
satisfying certain compatibility conditions (\textit{i.e.} \cite[Proposition
3.3]{FMP17}).

From a quadruple $(\Gamma,G,\alpha,\beta)$ as above, one can construct a
compact quantum group $\cqG=\cqG(\Gamma,G,\alpha,\beta)$, as explained in \cite{VV03} or
\cite[Section 3.2]{FMP17}; we denote it by $\cqG(\Gamma,G,\alpha,\beta)$
when we wish to be explicit about the matched pair structure, and
reserve the present notation of $\Gamma$, $G$, $\alpha$, $\beta$, and
$\cqG$ throughout the current section. The dual discrete quantum groups $\hat \cqG$ will
provide, under certain conditions, non-trivial examples of discrete quantum groups with property (T).

Let us now discuss the construction of $\cqG$.  Let $A_m=\Gamma\ltimes_{\alpha,f} C(G)$ be the full crossed product C*-algebra and $A=\Gamma\ltimes_{\alpha} C(G)$ the reduced crossed product C*-algebra. Let $\omega$ denote the canonical injective maps from $C(G)$ to $A_m$ and from $C(G)$ to $A$. For $\gamma \in \Gamma$ we denote by $u_{\gamma}$ the canonical unitaries in either $A_m$ or $A$.  The bicrossed product construction endows $A_m$ with a comultiplication making it a compact quantum group.  This is done as follows.  First, for each coset $\gamma \cdot G\in \Gamma/G$, we define  
\begin{equation*}
    v^{\gamma\cdot G}=(v_{rs})_{r,s\in \gamma\cdot G}\in \mbb{M}_{|\gamma\cdot G|}\otimes C(G),
\end{equation*}
where $v_{rs}=\chi_{A_{r,s}}$ is the characteristic function of the set $A_{r,s}=\{g\in G|\beta_g(r) = s\}$. Note that  $v^{\gamma\cdot G}$ is a {\it magic unitary} over $C(G)$, that is, a matrix whose rows and columns are projection-valued measures (PVMs). Finally, the comultiplication is given by the unique unital *-homomorphism $\Delta_m : A_m\rightarrow A_m\otimes A_m$ such that
\begin{equation}
    \Delta_m\circ \omega=(\omega\otimes \omega)\circ \Delta_G\text{ and } \Delta_m(u_{\gamma})=\sum_{v\in\gamma\cdot G}u_{\gamma}\omega(v_{\gamma,r})\otimes u_r \qquad (\gamma\in \Gamma).
\end{equation}
Then $\cqG=(A_m,\Delta_m)$ is a compact quantum group, called the {\it (compact) bicrossed product} associated to the matched pair $(\Gamma, G)$.

The final piece of information we need 
 concerns the description of the  irreducible unitary representations of $\cqG$.  For each $\gamma \in \Gamma$, we consider the orbit $\gamma \cdot G \in \Gamma/G$  and define \[V^{\gamma\cdot G}=\sum_{r,s\in \gamma\cdot G}e_{r,s}\otimes u_r\omega(v_{r,s})\in \mbb{M}_{|\gamma\cdot G|}\otimes A_m.\] Then $V^{\gamma\cdot G}$ determines an irreducible unitary representation of $\cqG$, and the family $\{V^{\gamma\cdot G}\}_{\gamma\cdot G \in \Gamma/G}$ of representations are pairwise inequivalent and satisfy $\overline{V^{\gamma\cdot G}}\simeq V^{\gamma^{-1}\cdot G}$.  Moreover, any irreducible unitary representation of $\cqG$ is equivalent to a subrepresentation of $V^{\gamma\cdot G}\otimes v^x$ for some $\gamma\cdot G\in \Gamma/G$ and $x\in \mathrm{Irred}(G)$, where $v^{x}=(\id\otimes \omega)(u^x)$. 
 
\subsection{Property (T) and the bicrossed product construction}
To the best of our knowledge, a complete classification of property (T) for the duals $\hat \cqG$ of a compact bicrossed product $\cqG(\Gamma,G, \alpha,\beta)$ in terms of the input data is not known.  However, several important results have been obtained \cite{FMP17, VaVa19}.  In particular, \cite[Theorem 4.3]{FMP17} shows that $\hat \cqG$ has property (T) whenever $\Gamma$ has property (T) and $G$ is finite. Moreover, in this case, the proof of \cite[Theorem 4.3]{FMP17} shows that if $(E,\epsilon)$ is a Kazhdan pair for $\Gamma$ containing the identity element, and $v$ is a fundamental representation of $G$, then (the irreducible components of) $\{V^{\gamma \cdot G} \otimes v\}_{\gamma \in E}$ is a Kazhdan set for $\hat \cqG$. 

As a concrete example, one can take any natural number $n \ge 3$ and any prime number $p\ge 3$. Let $\Gamma = SL_n(\mathbb Z)$, $G = SL_n(\mathbb F_p)$, and consider the action $\alpha$ given by $\alpha_\gamma (g) = [\gamma]g[\gamma]^{-1}$, and $\beta$ being the trivial action.
The resulting discrete quantum group $\hat \cqG$ then has property (T).  Moreover, $\hat \cqG$ is residually finite by \cite[Theorem 4.2]{BBCY20}.

In the work \cite{VaVa19}, a very large class of compact bicrossed products $\cqG$ whose duals have property (T) was discovered using triangle presentations and related constructions.  These examples are somewhat more interesting than the above one, because they arise from matched pairs where $G$ is also infinite (and therefore $\hat \cqG$ is not commensurable with a classical property (T) group).  As an example (cf. \cite[Remark 6.3]{VaVa19}), one can let $\mathbb K$ be a commutative local field with ring of integers $\mathcal O$. Put $H = PGL(3, \mathbb K)$ and
$G = PGL(3, \mathcal O)$. Let $\Gamma < H$ be a subgroup such that $H = \Gamma G$ and $\Gamma \cap G = \{e\}$. Then the dual 
 of the compact bicrossed product $\cqG$ associated to the matched pair $(\Gamma,G)$ has property (T).  

\subsection{Channels corresponding to the bicrossed product construction}
Let us briefly describe the general flavour of the channels that one can expect from bicrossed product quantum groups.  We will use the notation of the previous section. Let $U$ be a finite dimensional unitary representation  of $\hat \cqG$ on a finite-dimensional Hilbert space $\mathcal K$, and let  $\pi:C(\cqG) \to B(\mathcal K)$ be the associated $\ast$-homomorphism. 

Let $\gamma\in \Gamma$ be arbitrary, and let us analyze what kind of channels we can obtain if we use the construction in Proposition \ref{prop:cqg-quantum-channel} with $E=\{V^{\gamma\cdot G}\}$. Let $\phi$ be a state on $\mbb{M}_{|\gamma\cdot G|}$. Using Proposition \ref{prop:cqg-quantum-channel}, we find the quantum channel $\Phi = \Phi_{ V^{\gamma \cdot G}, \phi, \pi}$ is given by
\begin{align*}
    \Phi(\rho)&=(\phi\otimes \id)((\id\otimes \pi)(V^{\gamma\cdot G})(I\otimes \rho)(\id\otimes \pi)(V^{\gamma\cdot G})^*)\\
    &=(\phi\otimes \id)\left(\sum_{r_1,r_2,s_1,s_2\in\gamma\cdot G}(e_{r_1,s_1}\otimes \pi(u_{r_1}\omega(v_{r_1,s_1})))(I\otimes \rho)(e_{r_2,s_2}\otimes \pi(u_{r_2}\omega(v_{r_2,s_2})))^*\right)\\
    &=(\phi\otimes \id)\left(\sum_{r_1,r_2,s\in\gamma\cdot G}e_{r_1,r_2}\otimes(\pi(u_{r_1}\omega(v_{r_1,s}))\rho \pi(u_{r_2}\omega(v_{r_2,s}))^*)\right)\\
    &=(\phi\otimes \id)\left(\sum_{r_1,r_2,s\in\gamma\cdot G}e_{r_1,r_2}\otimes(\pi(u_{r_1})\pi(\omega(v_{r_1,s}))\rho \pi(\omega(v_{r_2,s}))^*\pi(u_{r_2}))^*)\right).
\end{align*}
\begin{proposition}\label{prop:mixed-unitary}
If $\phi$ is a trace, then $\Phi$ is a mixed unitary channel.  
\end{proposition}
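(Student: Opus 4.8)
The plan is to show that when $\phi$ is a trace on $\mbb{M}_{|\gamma\cdot G|}$, the channel $\Phi$ admits a Kraus decomposition in which every Kraus operator is a scalar multiple of a unitary on $B(\mathcal K)$. The key structural fact to exploit is that $V^{\gamma\cdot G}$ is built from the magic unitary $v^{\gamma\cdot G}$ together with the canonical unitaries $u_r$: the entries $\omega(v_{r,s})=\chi_{A_{r,s}}$ are the characteristic functions of the sets $A_{r,s}=\{g\in G\mid\beta_g(r)=s\}$, and for each fixed $r$ the family $\{A_{r,s}\}_{s\in\gamma\cdot G}$ partitions $G$, so $\{\omega(v_{r,s})\}_s$ is a PVM, i.e. $\sum_s \omega(v_{r,s})=1$ and $\omega(v_{r,s})\omega(v_{r,s'})=\delta_{s,s'}\omega(v_{r,s})$. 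Applying the $\ast$-homomorphism $\pi$ preserves these orthogonality and completeness relations, so $\{\pi(\omega(v_{r,s}))\}_s$ is again a PVM in $B(\mathcal K)$ for each $r$, and each $\pi(u_r)$ is a unitary.

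Using $\phi=\tfrac{1}{|\gamma\cdot G|}\Tr$ in the last displayed expression for $\Phi(\rho)$, I would first collapse the $\phi$-contraction: since $\phi(e_{r_1,r_2})=\tfrac{1}{|\gamma\cdot G|}\delta_{r_1,r_2}$, the double sum over $r_1,r_2$ reduces to a single sum over $r=r_1=r_2$, yielding
\begin{equation*}
\Phi(\rho)=\frac{1}{|\gamma\cdot G|}\sum_{r,s\in\gamma\cdot G}\pi(u_r)\pi(\omega(v_{r,s}))\,\rho\,\pi(\omega(v_{r,s}))^*\pi(u_r)^*.
\end{equation*}
Thus the candidate Kraus operators are $K_{r,s}=\tfrac{1}{\sqrt{|\gamma\cdot G|}}\pi(u_r)\pi(\omega(v_{r,s}))$. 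These are \emph{not} individually unitary (the projections $\pi(\omega(v_{r,s}))$ are not invertible), so the remaining task is to regroup them into genuine unitaries.

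The main idea is to index the Kraus operators differently, using the bijective structure of the actions rather than the PVM structure. Because $\Gamma$ and $G$ form a matched pair, for each fixed $g\in G$ the map $r\mapsto\beta_g(r)$ is a bijection of $\gamma\cdot G$; equivalently, the sets $A_{r,s}$ organise so that for each pair $(r,s)$ with $s=\beta_g(r)$ one obtains a well-defined assignment. I would therefore re-sum over $g\in G$ (or over a labelling of the partition pieces) so that $\sum_s\pi(u_r)\pi(\omega(v_{r,s}))\,\rho\,\pi(\omega(v_{r,s}))^*\pi(u_r)^*$ rearranges into a sum of terms $W\rho W^*$ with each $W$ a product of a $\pi(u_r)$ with a \emph{full} PVM summed against the bijection, making $W$ unitary. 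The cleanest route is to recognise that $\sum_{r,s}e_{r,s}\otimes\pi(u_r\omega(v_{r,s}))=(\id\otimes\pi)(V^{\gamma\cdot G})$ is itself unitary in $\mbb{M}_{|\gamma\cdot G|}\otimes B(\mathcal K)$, and that $\Phi$ with $\phi=\tfrac1n\Tr$ is exactly the channel $\rho\mapsto\tfrac1n\sum_{r}(\text{diagonal compression})$; I would then invoke the standard fact that a channel of the form $(\tfrac1n\Tr\otimes\id)(W(1\otimes\rho)W^*)$ for a unitary $W\in\mbb{M}_n\otimes B(\mathcal K)$ is mixed unitary precisely when $W$ can be written with unitary matrix entries after a suitable basis change, which here is guaranteed because the entries of $V^{\gamma\cdot G}$ come from a magic unitary.

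The step I expect to be the genuine obstacle is this last regrouping: extracting honest unitaries from the partial isometries $\pi(u_r)\pi(\omega(v_{r,s}))$. The crucial input is the matched-pair bijectivity of $\beta$, which ensures that summing the projections $\omega(v_{r,s})$ along the orbit reconstitutes the identity and that the combined operator $\sum_r\pi(u_r)\pi(\omega(v_{r,\sigma(r)}))$, for any section $\sigma$ selecting one $s$ per $r$ compatibly with a fixed $g\in G$, is unitary. Verifying that these selections are well-defined and exhaust the Kraus sum without double-counting is the technical heart of the argument; once that bookkeeping is in place, mixed-unitarity follows immediately since $\Phi(\rho)=\sum_g \tfrac{1}{|\gamma\cdot G|}\,W_g\rho W_g^*$ with each $W_g$ unitary and the scalar weights $\tfrac{1}{|\gamma\cdot G|}$ (absorbed appropriately) summing to $1$ by unitality of $\Phi$.
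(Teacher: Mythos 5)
Your opening reduction --- using $\phi(e_{r_1,r_2})=\frac{1}{|\gamma\cdot G|}\delta_{r_1,r_2}$ to collapse $\Phi$ to the Kraus form with operators $\pi(u_r)\pi(\omega(v_{r,s}))$, and noting that each row $\{\pi(\omega(v_{r,s}))\}_{s}$ of the magic unitary maps to a PVM --- is exactly how the paper's proof begins. But the step you yourself flag as the technical heart fails. Fix $g\in G$ and let $\sigma_g(r)=\beta_g(r)$ (this is indeed a bijection of $\gamma\cdot G$, but that is irrelevant to what follows). Every set $A_{r,\sigma_g(r)}=\{g'\in G:\beta_{g'}(r)=\beta_g(r)\}$ contains the point $g$, so the chosen projections for different $r$ are very far from orthogonal: $\sum_{r}\chi_{A_{r,\sigma_g(r)}}$ takes the value $|\gamma\cdot G|$ at $g$. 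Consequently, for $W_g=\sum_r u_r\,\omega(v_{r,\sigma_g(r)})$, the image of $W_g^*W_g$ under the canonical conditional expectation of the crossed product onto $C(G)$ is $\sum_r\chi_{A_{r,\sigma_g(r)}}\neq 1$, so $W_g$ is not unitary --- not even an isometry --- in $A_m$, and hence $\pi(W_g)$ fails to be unitary for any faithful $\pi$ (the twisted cross terms for $r\neq r'$ need not vanish either). The bookkeeping also cannot close at the level of the channel: averaging $W_g\rho W_g^*$ over $g$ generates cross terms indexed by pairs $r\neq r'$ with weights proportional to $|A_{r,s}\cap A_{r',s'}|$, which are absent from $\Phi$ and do not cancel in general; moreover your weights $1/|\gamma\cdot G|$ over $g\in G$ sum to $|G|/|\gamma\cdot G|\neq 1$, and $G$ may be infinite here, in which case the sum over $g$ must be an integral. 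Finally, the ``standard fact'' you invoke --- that $(\tfrac1n\Tr\otimes\id)(W(1\otimes\rho)W^*)$ is mixed unitary precisely when $W$ has unitary entries after a basis change --- is not a theorem you can cite.

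The missing idea is that one must keep \emph{all} the projections of each row, combined with phases, rather than select one projection per $r$. The paper proves the elementary lemma that a pinching by a PVM is mixed unitary: if $p_1,\dots,p_n$ are orthogonal projections with $\sum_i p_i=1$ and $U=\sum_k e^{2\pi i k/n}p_k$, then $\sum_i p_i\rho p_i=\frac1n\sum_{k=1}^n U^k\rho(U^k)^*$. Writing
\begin{equation*}
\Phi(\rho)=\frac{1}{|\gamma\cdot G|}\sum_{r}\pi(u_r)\Big(\sum_{s}\pi(\omega(v_{r,s}))\,\rho\,\pi(\omega(v_{r,s}))^*\Big)\pi(u_r)^*,
\end{equation*}
each inner pinching is mixed unitary by the lemma, conjugation by the unitary $\pi(u_r)$ preserves mixed unitarity, and a convex combination over $r$ of mixed unitary channels is again mixed unitary. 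Unwinding, the genuine unitary Kraus operators are $\pi(u_r)\,U_r^k$ with $U_r=\sum_{j=1}^{n}e^{2\pi i j/n}\pi(\omega(v_{r,s_j}))$ after enumerating $\gamma\cdot G=\{s_1,\dots,s_n\}$: phase sums over each full row PVM, not section-selected partial isometries. The matched-pair bijectivity of $\beta$ plays no role in the correct argument.
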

To prove this proposition, we first need an easy lemma. 
\begin{lemma} \label{lem:easy}
    Let $p_1,\dots p_n$ be orthogonal projections in $\mbb{M}_N$ that sum to the identity. Then the channel
    \begin{equation*}
        \rho\mapsto \sum_{i=1}^np_i\rho p_i
    \end{equation*}
    is a mixed unitary channel.
\end{lemma}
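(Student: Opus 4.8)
The plan is to exhibit an explicit family of unitaries whose uniform average reproduces the given channel. Since $p_1, \dots, p_n$ form a resolution of the identity into mutually orthogonal projections, the natural candidates come from a discrete Fourier transform of the $p_i$. First I would fix the primitive root of unity $\omega = e^{2\pi i / n}$ and define, for each $k \in \{0, 1, \dots, n-1\}$, the operator $U_k = \sum_{j=1}^n \omega^{jk} p_j \in \mbb{M}_N$.

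Next I would check that each $U_k$ is unitary. Since each $p_j$ is self-adjoint, $U_k^* = \sum_j \omega^{-jk} p_j$, and using the orthogonality relations $p_j p_l = \delta_{jl} p_j$ together with $\sum_j p_j = 1$, a one-line computation gives $U_k U_k^* = \sum_j |\omega^{jk}|^2 p_j = \sum_j p_j = 1$, and likewise $U_k^* U_k = 1$.

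The core step is then to compute the uniform mixture $\frac{1}{n}\sum_{k=0}^{n-1} U_k \rho U_k^*$. Expanding the product and collecting terms, the operator $p_j \rho p_l$ appears with coefficient $\frac{1}{n}\sum_{k=0}^{n-1}\omega^{(j-l)k}$. By the orthogonality of characters of $\mathbb Z / n\mathbb Z$ (a finite geometric sum), this coefficient equals $\delta_{jl}$, so every off-diagonal contribution vanishes and the mixture collapses to $\sum_{j=1}^n p_j \rho p_j$, which is precisely the channel in the statement. This realises the channel as an equal convex combination of the $n$ unitary conjugations $\rho \mapsto U_k \rho U_k^*$, hence as a mixed unitary channel.

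There is no serious obstacle here: the only points requiring care are the character-sum identity $\sum_{k=0}^{n-1}\omega^{(j-l)k} = n\,\delta_{jl}$, which is entirely routine, and the unitarity of the $U_k$, which rests solely on the orthogonality and completeness of the projections. The argument is the standard discrete-Fourier averaging trick and applies verbatim to any resolution of the identity into orthogonal projections.
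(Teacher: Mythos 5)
Your proof is correct and is essentially the paper's own argument: the paper defines the single unitary $U=\sum_k e^{2\pi i k/n}p_k$ and averages over its powers $U^1,\dots,U^n$, and since $U^k=\sum_j e^{2\pi i jk/n}p_j$ these powers are exactly your Fourier-transformed unitaries $U_k$. The character-sum computation collapsing the mixture to $\sum_j p_j\rho p_j$ is the same in both.
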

\begin{proof}
    Define the operator
    \begin{equation*}
        U=\sum_{k=1}^ne^{2\pi i k/n}p_k.
    \end{equation*}
    This is clearly a unitary. Moreover, 
    \begin{align*}
        \frac{1}{n}\sum_{k=1}^nU^k\rho (U^k)^*&=\frac{1}{n}\sum_{k,l,m=1}^ne^{2\pi i kl/n}p_l\rho e^{-2\pi i km/n } p_m\\
        &=\frac{1}{n}\sum_{k,l,m=1}^ne^{2\pi i k(l-m)/n}p_l\rho  p_m\\
        &=\sum_{l=1}^np_l\rho p_l.
    \end{align*}
    This shows that the channel is indeed a mixed unitary channel.
\end{proof}

\begin{proof}[Proof of Proposition \ref{prop:mixed-unitary}]
We will now assume that $\phi$ is the trace. In this case, the equation for $\Phi$ above Proposition \ref{prop:mixed-unitary} becomes
\begin{equation} \label{eq:tracial-bicrossed-product-channel}
    \Phi(\rho)=\frac{1}{|\gamma \cdot G|}\sum_{r,s\in \gamma\cdot G}\pi(u_r)\pi(\omega(v_{r,s}))\rho \pi(\omega(v_{r,s}))^*\pi(u_r)^*.
\end{equation}
By Lemma \ref{lem:easy} the channels
\begin{equation*}
    \rho \mapsto \sum_{s\in \gamma\cdot G}\pi(\omega(v_{r,s}))\rho \pi(\omega(v_{r,s}))^* \qquad (s \in \gamma \cdot G)
\end{equation*}
are mixed unitaries, which in turn shows that $\Phi$ is a convex combination of  compositions of unitary channels, and hence itself mixed unitary.     
\end{proof}

\begin{remark}
The conclusion of Proposition \ref{prop:mixed-unitary} holds more generally if we consider channels built from any direct sums of building block representations of the form $V^{\gamma \cdot G} \otimes v^x$, ($\gamma \in \Gamma$, $x \in \text{Irred}(G)$) and traces on the underlying multi-matrix algebras.  Indeed, the channels associated to single tensor products $V^{\gamma \cdot G} \otimes v^x$ and traces are easily seen to be compositions of mixed unitary channels, and passing to direct sums amounts to convex combinations of mixed unitary channels.
\end{remark}

\begin{remark}
Proposition \ref{prop:mixed-unitary} does not rule out the possibility of obtaining non-mixed unitary expanders from bicrossed product quantum groups when we work with non-tracial states $\phi$ on the spaces $\mbb{M}_N$.  However, we know from the results of  Section \ref{sec:non-tracial-gap} that our bounds for spectral gap are governed by the smallest eigenvalue of a $\phi$ (i.e., the tracal part of $\phi$ in the proof of Lemma \ref{lem:technical2}), so even if the resulting channel $\Phi$ was non-random unitary with good expansion, we would really only still be looking at a convex combination of a mixed unitary quantum expander and a non-mixed unitary channel whose expansion properties are unknown. 
\end{remark}

\section{Coideals and spectral gap for quantum Schreier graphs}
\label{sec: Schreier}
In Section \ref{sec:edge-expanders}, we saw how irreducible finite-dimensional representations of property (T) discrete quantum groups can be used to construct quantum channels with uniform lower bounds on their spectral gap.  In this section, we outline another approach to constructing quantum expanders which is closer in spirit to the classical construction of expander graphs of \cite{Lub94, Mar73} obtained by considering finite Schreier coset graphs associated to discrete groups with property (T).  The key idea is to consider the appropriate quantum analogue of the (finite) quotient spaces $\Gamma/H$, where $\Gamma$ is a property (T) discrete group,  and $H \le \Gamma$ is a (finite-index) subgroup. For discrete quantum groups, this is captured by the notion of a {\it coideal}.

%The following construction is more in line with the theory of quantum Cayley graphs \cite{Was23, Daw24}.  At the end we explain the connection to the main construction presented in this paper.

We use the notations of the previous sections for quantum groups.  In particular, $\cqG$ always denotes a compact quantum group and $\hat \cqG$ denotes its dual discrete quantum group.

\begin{definition}\label{def:coideal} Let $\hat \cqG$ be a discrete quantum group.  A {\it (left) coideal} of $\hat \cqG$ is a von Neumann subalgebra $M \subseteq \ell^\infty(\hat \cqG)$ such that 
\[\hat \Delta (M) \subseteq \ell^\infty(\hat \cqG) \overline{\otimes} M.\]
\end{definition}

It is well-known \cite{DKSS12} that when $\hat \cqG = \Gamma$ is a classical discrete group, coideals in $\Gamma$ are in one-to-one correspondence with the homogeneous spaces $\Gamma/H$, where $H \le \Gamma$ is a subgroup.  The identification is given by $M = \ell^\infty(\Gamma/H) \subseteq \ell^\infty(\Gamma)$, the von Neumann subalgebra of functions constant on the left cosets of $H$.   For general discrete quantum groups $\hat \cqG$, this analogy breaks down, and the theory of coideals turns out to be much richer than the study of quotients of $\hat \cqG$ by quantum subgroups.  In particular, while the notion of {\it quantum subgroup} $\hat H \subseteq \hat \cqG$ is well understood \cite{DKSS12}, and gives rise to a ``quotient-type'' coideal $M = \ell^\infty(\hat \cqG/\hat H)$, the, not all coideals arise from quantum subgroups.  See for example \cite{KK17,AK24},and references therein.       

Nonetheless, we still regard general coideals $M \subseteq \ell^\infty(\hat \cqG)$ as a kind of quantum homogeneous space. In particular, every coideal $M \subseteq \ell^\infty(\hat \cqG)$ admits a natural ergodic ``translation action'' of $\hat \cqG$ on $M$, which we now describe.

\begin{definition} \label{def:dqg-action}
A {\it (left) action} of a discrete quantum group $
\hat \cqG$ on a von Neumann algebra $M$ is a normal unital $\ast$-homomorphism \[\alpha: M \to \ell^\infty(\hat \cqG)\overline{\otimes}M\] satisfying the coassociativity condition \[(\hat \Delta \otimes \id)\alpha = (\id \otimes \alpha)\alpha.\]
We moreover call the action $\alpha$ {\it ergodic} if the fixed point algebra  $M^\alpha = \{x \in M : \alpha(x) = 1 \otimes x\}$ is trivial, i.e., \[M^\alpha = \mathbb C 1.\]
\end{definition}

The above definition generalises the usual notion of an action of a discrete group $\Gamma$ on $M$ by $\ast$-automorphisms.  Indeed, if $\Gamma$ is a classsical discrete group and $\Gamma \curvearrowright^\alpha M$ is an action given by \begin{align}\label{eqn:traditional-alpha}\Gamma \times M \owns (g,x) \mapsto \alpha_g(x) \in M,
\end{align} then one can define a map (still denoted by $\alpha$) $\alpha:M \to \ell^\infty(\Gamma) \overline{\otimes} M \cong \prod_{g \in \Gamma}^{\ell^\infty} M$ via
\begin{align} \label{eqn:nontraditional-alpha}
\alpha(x) = (\alpha_g(x))_{g \in \Gamma} \qquad (x \in  M).
\end{align}
Then one can easily verify that  \eqref{eqn:nontraditional-alpha} satisfies Definition \ref{def:dqg-action} if and only if \eqref{eqn:traditional-alpha} defines an action of $\Gamma$ on $M$ in the usual sense.

Returning to coideals $M \subseteq \ell^\infty(\hat \cqG)$ of a discrete quantum group $\hat \cqG$, the restriction of the comultiplication defines {\it the canonical (left) action}  $\alpha := \hat \Delta|_M$ of $\hat \cqG$ on $M$. Again in the classical case of a discrete group $\Gamma$, with $M = \ell^\infty(\Gamma/H)$ the coideal associated to a subgroup $H \le \Gamma$, the action $\alpha$ is just the canonical left translation action of $\Gamma$ on $\ell^\infty(\Gamma/H)$.  

Actions of quantum groups on von Neumann algebras are strongly linked to the theory of unitary representations.  Indeed, if $U \in M(C_0(\hat \cqG) \otimes B_0(\mathcal K))$ is a unitary representation of a discrete quantum group on a Hilbert space $\mathcal K$, then 
\[
\alpha: B(\mathcal K) \to \ell^\infty(\hat \cqG) \overline{\otimes} B(\mathcal K); \quad x \mapsto U^*(1 \otimes x)U \qquad (x \in B(\mathcal K))
\]
defines an action of $\hat \cqG$ on the von Neumann algebra $B(\mathcal K)$.  Conversely, a deep result of Vaes~\cite{Va01} shows that any action $\alpha$ of $\hat \cqG$ on a von Neumann algebra $M$ is {\it unitarily implemented} in the above sense. 
 That is, one can find a faithful representation $M \subseteq B(\mathcal K)$ and a unitary representation $U \in M(C_0(\hat \cqG) \otimes B_0(\mathcal K))$ such that 
 \begin{align} \label{eqn: unitary-implementation}
 \alpha(x) =  U^*(1 \otimes x)U \qquad (x \in M).
 \end{align}

\begin{remark}
In the above discussion, one could have equivalently defined unitarily implemented actions \eqref{eqn: unitary-implementation} via the formula $x \mapsto U(1 \otimes x)U^*$.   The convention \eqref{eqn: unitary-implementation} just ensures that $U$ is a representation of $
\hat \cqG$, as apposed to a representation of the opposite quantum group $\hat \cqG^{\text{op}}$.    
\end{remark}

When $M$ has admits a normal  faithful {\it $\alpha$-invariant} state $\varphi$ in the sense that $(\id \otimes \varphi) \circ \alpha = \varphi(\cdot) 1$, the 
unitary representation implementing $\alpha$ can be described somewhat explicitly \cite[Section 3]{DSV17}: Here one may take $\mathcal K = L^2(M,\varphi)$, the GNS Hilbert space of $\varphi$.  Then $U$ is given via its adjoint by 
\begin{align}\label{eqn:unitary-formula}
U^*(\xi \otimes \Lambda(x)) = \alpha(x)(\xi \otimes \Lambda(1)) \qquad (x \in M, \ \xi \in \ell^2(\hat \cqG) ),
\end{align}
where $\Lambda:M \to L^2(M,\varphi)$ is the GNS map and $\ell^2(\hat \cqG)$ is the GNS Hilbert space of the (right) Haar weight of $\hat \cqG$.

Let $M, \alpha, \varphi, U$ be as in the previous paragraph, and let $\psi \in \ell^1(\hat \cqG)$ be a normal state.  Then, just as in Section \ref{sec:non-tracial-gap}, we obtain a  normal UCP $\varphi$-preserving map
\begin{align}\label{eqn:Psi-from-action}
\Psi:M \to M; \qquad \Psi(x) = (\psi \otimes \id)\alpha(x) \qquad (x \in M).
\end{align}
Thanks to \eqref{eqn:unitary-formula}, the canonical extension of $\Psi$ to a bounded  map $\tilde\Psi \in B(L^2(M,\varphi))$ (densely defined by the formula $\tilde \Psi(\Lambda(x)) = \Lambda(\Psi(x))$) is given in terms of the implementing unitary representation $U$ by the formula 
\begin{align}\label{eqn:L2-implementation}
\tilde \Psi = (\psi \otimes \id) (U^*).
\end{align}
In particular, we have 
$(\tilde \Psi)^* = (\psi \otimes \id )U$, so $\tilde\Psi$ is self-adjoint when  $\psi = \psi \circ \hat S$. 

A special case of interest to us in the sequel in which actions are always guaranteed to admit faithful invariant states is when the underlying von Neumann algebra is finite-dimensional.  This result is surely folklore, but we provide a proof for self-containment.

\begin{lemma} \label{lem:fd-actions}
Let $\alpha$ be an action of a unimodular discrete quantum group $\hat \cqG$ on a finite-dimensional von Neumann algebra $M$.  Then $M$ admits a faithful $\alpha$-invariant state.   
\end{lemma}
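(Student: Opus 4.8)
The plan is to produce a faithful invariant state by exhibiting the normalised trace of a suitable finite-dimensional representation as an invariant state, with invariance forced by unimodularity via bi-unitarity. First I would realise $M$ concretely on a finite-dimensional Hilbert space. Since $M$ is finite-dimensional, its standard form $\mathcal K = L^2(M,\Tr_M)$ (for $\Tr_M$ the canonical trace) is finite-dimensional and carries a faithful normal representation $M \subseteq B(\mathcal K)$. Invoking the unitary implementation theorem of Vaes in the form \eqref{eqn: unitary-implementation}, I may assume $\alpha$ is implemented on this $\mathcal K$ by a unitary representation $U \in M(C_0(\hat \cqG)\otimes B_0(\mathcal K))$, so that $\alpha(x) = U^*(1\otimes x)U$ for $x\in M$. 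By Proposition \ref{prop:qg-dual-representations}, $U = (\id \otimes \pi)(\mathcal V)$ for a $*$-representation $\pi : C(\cqG_{\max}) \to B(\mathcal K)$; writing $U^y := Up_y = (\id\otimes\pi)(u^y) \in B(\mathcal H_y)\otimes B(\mathcal K)$ for $y \in \mathrm{Irred}(\cqG)$ records $U$ block-by-block over the irreducibles.

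The candidate invariant functional is $\varphi = \tau|_M$, where $\tau$ is the normalised trace on $B(\mathcal K)$ (which exists precisely because $\mathcal K$ is finite-dimensional). I would verify directly that $(\id\otimes\tau)\alpha(x) = \tau(x)1$ for every $x\in B(\mathcal K)$. Expanding $\alpha(x)p_y = (U^y)^*(1\otimes x)U^y$ in the matrix units $e_{ij}$ of $B(\mathcal H_y)$ and applying $\id\otimes\tau$, the trace property lets me cyclically permute the factors $\pi(u^y_{ij})$ and reduces the $(l,j)$-entry of the $y$-block to $\tau\!\big(x\,\sum_i \pi(u^y_{il})\pi(u^y_{ij})^*\big)$. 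The crucial input is the orthogonality relation $\sum_i u^y_{il}(u^y_{ij})^* = \delta_{lj}1$, which is exactly unitarity of the contragredient of $u^y$; this holds because unimodularity of $\hat\cqG$ makes every finite-dimensional representation of $\cqG$ bi-unitary (the bi-unitarity proposition of Section \ref{subsubsec:dqg-duality-prop-T}). Feeding this in collapses the $y$-block to $\tau(x)1_{B(\mathcal H_y)}$, and as $y$ was arbitrary I obtain $(\id\otimes\tau)\alpha(x) = \tau(x)1$.

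Restricting to $x\in M$ and using $\alpha(M)\subseteq \ell^\infty(\hat\cqG)\overline{\otimes} M$ (so that applying $\id\otimes\tau$ to $\alpha(x)$ is the same as applying $\id\otimes\varphi$) yields $(\id\otimes\varphi)\alpha = \varphi(\cdot)1$, i.e. $\varphi$ is $\alpha$-invariant. Faithfulness is immediate, since the restriction of the faithful state $\tau$ on $B(\mathcal K)$ to the von Neumann subalgebra $M$ is again faithful.

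The step I expect to be the main obstacle is the very first one: guaranteeing that the action can be implemented on a \emph{finite-dimensional} Hilbert space, so that the normalised trace $\tau$ is available at all. Vaes' theorem as quoted only asserts implementability on some faithful $M \subseteq B(\mathcal K)$, and one must check that his construction may be run on the finite-dimensional standard form $L^2(M,\Tr_M)$ rather than on an a priori infinite-dimensional amplification. Once finite-dimensionality of $\mathcal K$ is secured, the remainder is the bookkeeping computation above, whose only non-formal ingredient is the bi-unitarity coming from unimodularity; this is also the precise place where the unimodularity hypothesis enters, mirroring how bi-unitarity forced trace-preservation in Proposition \ref{prop:cqg-quantum-channel}.
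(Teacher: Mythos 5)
Your proof is correct, but it takes a genuinely different route from the paper's. The paper argues softly via the quantum Bohr compactification: since $\hat \cqG$ is unimodular, every finite-dimensional unitary representation is admissible, so the implementing representation $U$ is a representation of the compact quantum group $b\hat\cqG$ and $\alpha$ may be regarded as an action of $b\hat\cqG$ on $M$; the invariant state is then obtained by Haar averaging, $\psi = (h \otimes \psi_0)\circ \alpha$ for an arbitrary faithful state $\psi_0$ on $M$ and $h$ the Haar state of $b\hat\cqG$. You instead exhibit a concrete invariant state by direct computation: the restriction to $M$ of the normalised trace $\tau$ on $B(L^2(M))$, with invariance forced by the bi-unitarity relation $\sum_i u^y_{il}(u^y_{ij})^* = \delta_{lj}1$ applied through $\pi$. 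Your computation is valid (it is precisely the dual of the trace-preservation computation in Proposition \ref{prop:cqg-quantum-channel}), and your appeal to ``unimodularity implies bi-unitarity'' is legitimate even though the paper states its bi-unitarity proposition under property (T): that proof uses only traciality of the Haar state, i.e.\ the Kac property, which the paper itself notes is equivalent to unimodularity of $\hat\cqG$. What each approach buys: the paper's argument is conceptual, situates the lemma in compact quantum group ergodic theory, and works for any faithful $\psi_0$; yours is self-contained modulo Section \ref{sec:qg}, produces a canonical invariant state, and in fact proves the stronger statement that the extended action $x \mapsto U^*(1\otimes x)U$ on all of $B(L^2(M))$ preserves the normalised trace.

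The obstacle you flag is real but is not a gap, and the paper's proof faces it identically: Vaes' unitary implementation is constructed canonically on the standard Hilbert space $L^2(M)$ relative to an arbitrary choice of faithful normal weight on $M$ -- no invariant state is needed for the implementation itself (the formula \eqref{eqn:unitary-formula} assuming an invariant state is a convenient special case, not the general construction) -- so for finite-dimensional $M$ the implementing representation acts on a finite-dimensional space automatically. Note that the paper's own proof tacitly requires the same fact, since its admissibility step (``every finite-dimensional representation is admissible'') applies only once one knows $U$ is finite-dimensional.
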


\begin{proof}
Let $M \subseteq B(\mathcal K)$  and let $U \in M(C_0(\hat \cqG) \otimes B_0(\mathcal K))$ be an implementing unitary for the action $\alpha$.   We make use of the {\it quantum Bohr compactification} $b\hat\cqG$ of $\hat \cqG$ \cite{So05}.  By definition, $b\hat \cqG$ is the compact quantum group with $C(b\hat \cqG)$ given by the C*-subalgebra of $\ell^\infty(\hat\cqG)$ generated by the matrix coefficients of the so-called  finite dimensional {\it admissible} unitary representations of $\hat \cqG$, with comultiplication given by the restriction of $\hat \Delta$.  Here, admissible means that the transpose of $U$, when viewed as a matrix, is invertible.  Since $\hat \cqG$ is unimodular, every  finite-dimensional representation $U$ is admissible \cite[Theorem 2.1]{Vi17}, and since the action $\alpha$ is implemented by $U$, which is a representation of $b \hat \cqG$,  it can also be regarded as an action of $b\hat \cqG$ on $M$.  The claim now follows from the fact that any action of a compact quantum group on a finite-dimensional von Neumann algebra $M$ admits a faithful invariant state $\psi$. Indeed, one can just take $\psi = (h \otimes \psi_0)\circ \alpha$, where $h$ is the Haar state of $b\hat \cqG$ and $\psi_0$ is any faithful state  on $M$.            
\end{proof}

\subsection{Quantum Cayley graphs and Schreier graphs}

We now introduce quantum Schreier graphs.  A quantum Schreier graph is a natural extension of the notion of a quantum Cayley graph, as described in \cite{Was23}, and we start off by discussing these.  We note that a version of quantum Cayley graphs was also developed earlier by Vergnioux \cite{Ver05}, using a rather different-looking, but ultimately equivalent language.  We will use the notation and terminology of  \cite{Was23} below.

Let $\hat \cqG$ be a unimodular discrete quantum group. For $x \in C_{00}(\hat \cqG)$, we consider the normal functional $\omega_x := \hat h(\cdot x) \in \ell^1(\hat \cqG)$.  Given $x,y \in C_{00}(\hat \cqG)$, there exists a unique $z \in C_{00}(\hat \cqG)$ such that $\omega_z = \omega_x \star \omega_y$.  This defines an associative product on $C_{00}(\hat \cqG)$, also called convolution, and denoted $z = x \star y$ \cite{Ver05,Ve07}.  In fact, when $\hat G$ is unimodular, we have \cite[Lemma 4.3]{BR17}
\begin{align}\label{eqn:conv-formula}
x\star y = (\omega_x\circ \hat S \otimes \id) \hat \Delta(y).    
\end{align}
In particular, the left convolution map $y \mapsto x\star y$ is nothing other than  the normal completely bounded map $\Psi_{\omega_x}:\ell^\infty(\hat \cqG) \to \ell^\infty(\hat \cqG)$ 
associated to the action $\alpha = \hat\Delta$ of $\hat \cqG$ on  $\ell^\infty(\hat \cqG)$.  This map is completely positive when $\omega_x \circ \hat S$ is a positive functional. 

To define a quantum Cayley graph over $\hat \cqG$, we fix a finite rank projection $p \in C_{00}(\hat \cqG)$ such that $\hat S(p) = p$, $\hat \epsilon(p) = 0$ and $\bigvee_{n \in \mathbb N} p^{\star n} = 1_{\ell^\infty(\hat \cqG)}$.  Define the normal completely positive map $A:\ell^\infty(\hat \cqG) \to \ell^\infty(\hat \cqG)$ by 
\[Ax = p\star x \qquad (x \in \ell^\infty(G)).\]

\begin{definition}[\cite{Was23}] \label{def:qCayley}
The pair $(\ell^\infty(\hat \cqG), A)$ is called a {\it quantum Cayley graph over $\hat \cqG$}.    
\end{definition}  As explained in \cite{Was23}, when $\Gamma = \hat \cqG$ is a classical discrete group, the above construction yields $p = \chi_E$, the characteristic function of a symmetric generating set $e \notin E = E^{-1}$ of $\Gamma$, and $Af(g) = (\chi_E\star f)(g) = \sum_{s\in E}f(sg)$ is the usual adjacency operator of the Cayley graph $\mathcal C(\Gamma, E)$.  We remark that quantum Cayley graphs are always {\it $d$-regular quantum graphs} with $d = \hat h(p)$, since 
\[
A1 = \hat h(p)1.
\]
We also note that
\[
\hat h(Ax) = \hat h(p\star x) = \hat h(p) \hat h(x),
\]
for all $x \in C_{00}(\hat \cqG)$.  As a consequence, the normalised adjacency operator is $d^{-1}A$ is a bistochastic quantum channel relative to the tracial Haar weight $\hat h$.

Let $M \subseteq \ell^\infty(\hat \cqG)$ be a coideal.  It is clear from the definitions that the quantum adjacecncy matrix $A$ associated to the quantum Cayley graph in Definition \ref{def:qCayley} leaves $M$ invariant. In particular, in the case of classical discrete groups, restriction $A|_{M}$ is the natural analogue of the adjacency matrix of the associated Schreier coset graph. 
 This leads us to the following definition.

 \begin{definition}\label{qSchreier}
 Let  $(\ell^\infty(\hat \cqG), A)$ be a ($d$-regular) quantum Cayley graph over $\hat \cqG$.  For any coideal $M \subseteq \ell^\infty(\hat \cqG)$, the pair $(M, A|_{M})$ is called the {\it ($d$-regular) quantum Schreier graph} associated to the generating projection $p$.   
 \end{definition}

\subsection{Spectral gap for quantum Schreier graphs}

We now come to the main result of the section, which asserts that quantum Schrier graphs associated to property (T) quantum groups give rise to bounded degree expanders.  

Let $\hat \cqG$ be a discrete quantum group with property (T). Let $(E, \epsilon)$ be a fixed Kazhdan pair for $\hat \cqG$ with $E = \bar E$.  Let $p_E \in C_{00}(\hat \cqG)$ be the central support of $E$, let $M \subseteq \ell^\infty(\hat \cqG)$ be a coideal, and let $(M,A|_{M})$ be the quantum Schreier graph associated to the generating projection $p_E$.  Finally, assume that $M$ admits a normal invariant state $\varphi$.  In this case (thanks to the assumption $\hat S(p) = p)$, we have that $A|_M$ extends to a self-adjoint map $\widetilde{A|_M}:L^2(M,\varphi) \to L^2(M,\varphi)$. By Lemma \ref{lem:fd-actions}, this is always true if $\dim M < \infty$.  We denote by $\lambda_2(M,E) \le 1$ the second largest eigenvalue of $d^{-1}\widetilde{A|_M}$, where $d = \hat h(p_E) = \sum_{s \in E} (\dim s)^2$ is the degreee.  

\begin{theorem}\label{thm:spectral-gap-schreier}
Let $\hat \cqG$ and $(E,\epsilon)$ be as above.  For any finite dimensional quantum Schreier graph $(M,A|_M)$ be  associated to $p_E$, we have 
\[
\lambda_2(M,E)  \le 1-\frac{\lambda\epsilon^2}{2}
\]
where $\lambda= \min_{s \in E} \frac{\dim \mathcal H_s}{d}$.
\end{theorem}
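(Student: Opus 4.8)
The plan is to recognise $d^{-1}\widetilde{A|_M}$ as an operator of the form $(\psi \otimes \id)(U)$ for a symmetric state $\psi$ and a unitary representation $U$, so that the non-tracial spectral gap estimate \eqref{eqn:non-tracial-basic} of Section \ref{sec:non-tracial-gap} applies essentially verbatim. Since $\dim M < \infty$, Lemma \ref{lem:fd-actions} supplies a faithful $\alpha$-invariant state $\varphi$ on $M$, where $\alpha = \hat\Delta|_M$ is the canonical coideal action. Realising $M \subseteq B(L^2(M,\varphi))$ via the GNS construction and letting $U$ be the implementing unitary of \eqref{eqn:unitary-formula}, the identity \eqref{eqn:L2-implementation} expresses the $L^2$-extension of the $\varphi$-preserving map $\Psi = d^{-1}A|_M$ from \eqref{eqn:Psi-from-action} as $(\psi \otimes \id)(U^*)$, where $\psi$ is the state defining $\Psi$.

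Next I would pin down the functional $\psi$ and its density. By the convolution formula \eqref{eqn:conv-formula}, $A|_M = (\omega_{p_E}\circ \hat S \otimes \id)\alpha$, so $\psi = \tfrac{1}{d}\,\omega_{p_E}\circ\hat S$. Using $\hat S(p_E) = p_E$ (valid because $E = \bar E$ and $\hat S$ sends $p_x$ to $p_{\bar x}$) together with traciality of the Haar weight $\hat h$ in the unimodular case, one checks $\omega_{p_E}\circ \hat S = \omega_{p_E}$, so $\psi = \tfrac{1}{d}\omega_{p_E}$ is a symmetric state. By Lemma \ref{lem:self-adjoint} the operator $X := (\psi \otimes \id)(U) = (\psi \otimes \id)(U^*) = d^{-1}\widetilde{A|_M}$ is then a self-adjoint contraction. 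Restricting $\psi$ to $p_E\ell^\infty(\hat\cqG)p_E = \bigoplus_{s \in E} B(\mathcal H_s)$ and using $\hat h(a p_s) = \dim s\,\Tr_s(a p_s)$, the density of $\psi$ is $\sigma = \tfrac{1}{d}\bigoplus_{s\in E}(\dim s)\, I_{\mathcal H_s}$, whose eigenvalues are the numbers $\tfrac{\dim \mathcal H_s}{d}$; in particular its smallest eigenvalue is exactly $\lambda = \min_{s\in E}\tfrac{\dim\mathcal H_s}{d}$, matching the statement.

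Finally I would identify the top eigenspace and invoke the estimate. Because the coideal action $\alpha$ is ergodic, $M^\alpha = \mathbb C 1$, and the $U$-invariant vectors correspond precisely to $\Lambda(M^\alpha) = \mathbb C\Lambda(1)$ via \eqref{eqn:unitary-formula}; hence the closed invariant subspace $\mathcal K_0$ of $U$ coincides with the one-dimensional $1$-eigenspace $\mathbb C\Lambda(1)$ of $X$. By the variational characterisation of the second eigenvalue of a self-adjoint operator, $\lambda_2(M,E) = \sup\{\langle \xi, X\xi\rangle : \|\xi\|_2 = 1,\ \xi \in \mathcal K_0^\perp\}$. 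For each such $\xi$, estimate \eqref{eqn:non-tracial-basic}—which rests on Lemma \ref{lem:technical2} and the Kazhdan pair $(E,\epsilon)$—gives $\langle\xi, X\xi\rangle \le 1 - \tfrac{\lambda\epsilon^2}{2}$ with $\lambda$ the minimal eigenvalue of $\sigma$ computed above, yielding the bound.

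The step I expect to be the main obstacle is the rigorous identification $\mathcal K_0 = \mathbb C\Lambda(1)$: one must verify both that the coideal action is genuinely ergodic and that the $U$-invariant vectors are exactly $\Lambda(M^\alpha)$, using \eqref{eqn:unitary-formula} and the fact that $\Lambda(1)$ is separating for $M$. This is essential, since without multiplicity one for the eigenvalue $1$ the second eigenvalue would itself equal $1$ and the inequality would be vacuous. By contrast, the symmetry of $\psi$ and the evaluation of $\sigma$ are routine once the unimodular Haar trace formula and $\hat S(p_E)=p_E$ are in hand.
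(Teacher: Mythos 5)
Your proposal is correct and follows essentially the same route as the paper's proof: reduce to the self-adjoint operator $(\psi\otimes\id)(U)$ via Lemma \ref{lem:fd-actions} and \eqref{eqn:L2-implementation}, identify the $U$-invariant vectors with $\mathbb{C}\Lambda(1)$ by ergodicity of $\hat\Delta|_M$, and apply the non-tracial estimate \eqref{eqn:non-tracial-basic} with the density of $\psi = d^{-1}\hat h(\cdot\, p_E)$ having eigenvalues $\{\dim\mathcal H_s/d\}_{s\in E}$. Your explicit verifications of the symmetry $\psi\circ\hat S=\psi$ via $\hat S(p_E)=p_E$ and of the identification $d^{-1}A|_M=(\psi\otimes\id)\alpha$ through the convolution formula \eqref{eqn:conv-formula} are details the paper treats implicitly, and they are carried out correctly.
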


\begin{proof}
The proof is similar in spirit to the proofs of Theorems \ref{thm:lambda2} and \ref{thm:lambda2-nontracial}.  Let $\varphi$ be a faithful invariant state on $M$ with respect to the action $\alpha = \hat \Delta|_M$.  Let $U \in M(C_0(\hat \cqG) \otimes B_0(L^2(M,\varphi)))$ be the unitary implementation of $\alpha$.  Then by \eqref{eqn:L2-implementation}, we need to compute the second largest eigenvalue $\lambda_2(\tilde \Psi)$ of the self-adjoint map 
\[\tilde \Psi = (\psi \otimes \id)(U^*) = (\psi \otimes \id)(U),\]
where we are working with the $\hat S$-invariant state $\psi = d^{-1}\hat h(\cdot p_E) \in \ell^1(\hat \cqG)$.  Note that since the action $\alpha = \hat \Delta|_M$ is ergodic, it follows from equation \eqref{eqn:unitary-formula} (describing $U$) that the subspace $L^2(M,\varphi)_0$ of $U$-invariant vectors is exactly $\Lambda(M^\alpha) = \mathbb C \Lambda(1)$.

We thus need to establish the inequality
\begin{align}\label{eqn:suchfun}\inf_{\xi \in \Lambda(1)^\perp, \ \|\xi\|=1}\latRe\braket{\xi}{(\psi \otimes \id )(U)}{\xi}\leq 1-\frac{\lambda\epsilon^2}{2}\end{align}
Recalling now the notation $\mathcal H_E = \bigoplus_{s \in E}\mathcal H_s$, and the canonical embedding $p_E\ell^\infty(\hat \cqG)p_E \subseteq B(\mathcal H_E)$ from Section \ref{sec:edge-expanders}, we may canonically extend $\psi$ to a (non-tracial state) on $B(\mathcal H_E)$ with eigenvalues given (without multiplicity) by $\{\frac{\dim \mathcal H_s}{d}\}_{s \in E}$.  (The extended $\psi$ is nothing more than $\psi \circ \mathbb E$, where $\mathbb E:B(\mathcal H_E) \to p_E\ell^\infty(\hat \cqG)p_E$ is the trace-preserving conditional expectation.)   With all this setup in place, we now see that the desired inequality \eqref{eqn:suchfun} is just a special case of the inequality \eqref{eqn:non-tracial-basic} obtained in the proof of Theorem \ref{thm:lambda2-nontracial}.
\end{proof}

\section{Discussion} \label{sec:discussion}
In this work, we provide two general approaches to construct quantum expanders from discrete quantum groups with property (T).  The first approach (Section \ref{sec:edge-expanders})  takes as input a property (T) discrete quantum group together with a family of irreducible finite-dimensional representations with growing dimension, while the second approach (Section \ref{sec: Schreier}) takes as input a property (T) discrete quantum group together with a family of finite-dimensional coideals with unbounded dimension. 
Ultimately, what is needed for either method to produce examples is a good understanding of the finite-dimensional representation theory of a given property (T) discrete quantum group.  However, at this time, it appears that there is a dearth of non-classical examples where the finite dimensional representation theory is well understood.    In this section we will briefly discuss how the example we do have (quantum expanders coming from bicrossed products) should be viewed and what steps need to be taken to find more examples.

\subsection{Quantum expanders coming from bicrossed products}
The most natural quantum expanders coming from bicrossed products are convex combinations of quantum channels of the form given in (\ref{eq:tracial-bicrossed-product-channel}), 
\begin{equation*}
    \Phi(\rho)=\frac{1}{|\gamma \cdot G|}\sum_{r,s\in \gamma\cdot G}\pi(u_r)\pi(\omega(v_{r,s}))\rho \pi(\omega(v_{r,s}))^*\pi(u_r)^*.
\end{equation*}
However, taking the same convex combination of quantum channels of the form
\begin{equation*}
    \Phi'(\rho)=\frac{1}{|\gamma \cdot G|}\sum_{r,s\in \gamma\cdot G}\pi(u_r)\rho \pi(u_r)^*,
\end{equation*}
which are quantum channels corresponding to the classical property (T) discrete group $\Gamma$, already gives a quantum expander. If one composes two unital trace-preserving completely positive maps $\Phi$ and $\Psi$, then a spectral gap of either $\Phi$ or $\Psi$ ensures a spectral gap of at least the same size in $\Phi\circ\Psi$. How should we then view the quantum expanders coming from bicrossed products? The starting point is the quantum expander coming from the property (T) discrete group. This channel is a mixed unitary channel, and in the quantum expander coming from the bicrossed product each unitary conjugation is precomposed by a \textit{different} quantum channel. In general, doing this would no longer give you a quantum expander, but our results show that precomposing by these particular quantum channels preserves the spectral gap.

\subsection{Representation theory of other property (T) quantum groups}
As mentioned previously, there are several recent geometric constructions which give rise to examples of property (T) discrete quantum groups that are different from the bicrossed product construction.  These include the discrete quantum groups associated to triangle presentations \cite{VaVa19}, and the quantisations of discrete groups \cite{RV24}.  As pointed out in \cite{RV24}, there is also the quantum automorphism group $\text{Aut}^+(HS)$ of the Higman-Sims graph $HS$, whose dual is an infinite discrete quantum group with  property (T) .  All of these examples provide new potential candidates for the application of our machinery, as soon as their representation theory is understood.  In particular, we think it is a very interesting problem to better  understand the structure of the quantum automorphism group of the Higman-Sims graph $HS$, $\text{Aut}^+(HS)$, and its discrete dual.  For example, does the Hopf*-algebra $\mathcal O(\text{Aut}^+(HS))$ have many non-abelian finite-dimensional representations? 
 Is it residually finite dimensional? 

\subsection{Finding examples of quantum Schreier graphs}

The approach to quantum expanders via quantum Schreier graphs is very promising  because, on the one hand it gives a direct generalization of the classical construction of quantum expanders of Margulis \cite{Mar73}, while on the other hand it yields potentially new types of quantum expanders acting on general finite-dimensional von Neumann algebras (and not just full matrix algebras, as is standard in the literature).  The problem here again is to find non-trivial examples to apply these tools. We leave open the problem of finding non-classical discrete quantum groups with property (T) with large finite-dimensional coideals.  Even for the property (T) quantum groups coming from bicrossed products,  we do not know much about the structure of their coideals.  

\subsection*{Acknowledgements}
MB was partially supported by an NSERC discovery grant.  EC was partially supported by an NSERC Canada Graduate Scholarship. MV was supported by the NWO Vidi grant VI.Vidi.192.018 `Non-commutative harmonic analysis and rigidity of operator
algebras'. The authors would like to thank the Isaac Newton Institute (INI) for Mathematical Sciences, Cambridge, for support and hospitality during the programme Quantum information, quantum groups and operator algebras, where work on this paper was undertaken. The work undertaken at INI was supported by EPSRC grant EP/Z000580/1.  The authors also thank Archishna Bhattacharyya and Benjamin Anderson-Sackaney  for helpful discussions on fixed points of channels and coideals. 

\bibliographystyle{alphaarxiv.bst}
\bibliography{refs.bib}

\end{document}